\newtheorem{satz}{Satz}[section]
\newtheorem{Theorem}[satz]{Theorem}
\newtheorem{Cor}[satz]{Corollary}
\newtheorem{Lemma}[satz]{Lemma}
\newtheorem{Prop}[satz]{Proposition}
\theoremstyle{definition}
\newtheorem{Def}[satz]{Definition}
\newtheorem{Remark}[satz]{Remark}
\newtheorem{Example}[satz]{Example}
\newtheorem*{Def*}{Definition}
\newtheorem*{Example*}{Example}
\newtheorem*{Theorem*}{Theorem}
\newcommand{\R}{\mathbb{R}}
\newcommand{\upi}{\underline{\pi}}
\newcommand{\N}{\mathbb{N}}
\newcommand{\F}{\mathcal{F}}
\newcommand{\U}{\mathcal{U}}
\newcommand{\xr}{\xrightarrow}
\newcommand{\T}{\mathbf{T}}
\newcommand{\map}{\operatorname{map}}
\newcommand{\Hom}{\operatorname{Hom}}
\newcommand{\I}{\mathbf{I}}
\newcommand*{\defeq}{\mathrel{\rlap{%
                     \raisebox{0.3ex}{$\m@th\cdot$}}%
                     \raisebox{-0.3ex}{$\m@th\cdot$}}%
                     =}
\DeclareMathOperator*{\colim}{colim}
\DeclareMathOperator*{\tr}{tr}
\DeclareMathOperator*{\Inj}{Inj}
\DeclareMathOperator*{\Bij}{Bij}
\DeclareMathOperator*{\Lin}{L}
\DeclareMathOperator*{\triv}{triv}
\DeclareMathOperator*{\Cyl}{Cyl}
\DeclareMathOperator*{\Fin}{Fin}
\DeclareMathOperator*{\HEM}{H}
\DeclareMathOperator*{\sh}{sh}
\DeclareMathOperator*{\id}{id}
\numberwithin{equation}{section}
\title[Symmetric spectra model global homotopy theory]{Symmetric spectra model global homotopy theory of finite groups}
\author{Markus Hausmann}
\address{University of Bonn, Germany}
\email{hausmann@math.uni-bonn.de}
\begin{document}
\begin{abstract} In this paper we show that the category of symmetric spectra can be used to model global equivariant homotopy theory of finite groups.
\end{abstract}
\maketitle
\setcounter{section}{-1}
\section{Introduction}
Equivariant stable homotopy theory deals with the study of equivariant spectra and the cohomology theories they represent. While some of these equivariant theories are specific to a fixed group, many of them are defined in a uniform way for all compact Lie groups simultaneously, for example equivariant $K$-theory, Borel cohomology, equivariant bordism or equivariant cohomotopy. The idea of global equivariant homotopy theory is to view such a compatible collection of equivariant spectra - ranging through all compact Lie groups - as one ``global'' object, in particular to capture its full algebraic structure of restrictions, transfer maps and power operations. There have been various approaches to formalizing this idea and to obtain a category of global equivariant spectra, for example in \cite[Chapter 2]{LMS86}, \cite[Section 5]{GM97} and \cite{Boh14}.
In \cite{Sch15}, Schwede introduced a new approach by looking at the well-known category of orthogonal spectra of \cite{MMSS01} from a different point of view: Every orthogonal spectrum gives rise to a $G$-orthogonal spectrum for any compact Lie group $G$ by endowing it with the trivial $G$-action (and, classically, changing from the trivial to a complete $G$-universe, but this change of universe is an equivalence of categories on the point-set level). The fundamental observation used in \cite{Sch15} is that the $G$-homotopy type of such a $G$-orthogonal spectrum with trivial action is not determined by its non-equivariant homotopy type. There are maps of orthogonal spectra that are a non-equivariant stable equivalence but not a $G$-stable equivalence when given the trivial $G$-action.
Taking these $G$-homotopy types for varying $G$ into account gives rise to a much finer notion of weak equivalence called \emph{global equivalence} and thereby to the global stable homotopy category, which splits each non-equivariant homotopy type into many global variants.
A strength of Schwede's approach is that it on the one hand allows many examples (all the theories mentioned above 
are represented by a single orthogonal spectrum in this sense) and on the other hand is technically easy to work with, since the underlying category is just that of orthogonal spectra.

The purpose of this paper is to show that the category of symmetric spectra introduced by Hovey, Shipley and Smith in \cite{HSS00} can also be used to model global equivariant homotopy theory, if one takes ``global'' to mean all \emph{finite} groups instead of all compact Lie groups. Symmetric spectra have the advantage that they can also be based on simplicial sets and are generally more combinatorial, as it is sometimes easier to construct actions of symmetric groups than of orthogonal groups. For example, in \cite{Sch13alg} Schwede introduces a construction of the global algebraic $K$-theory of a ring (or more generally, of a category with an action of a specific $E_{\infty}$-operad), whose output is a symmetric spectrum and usually not an orthogonal spectrum.

Besides the fully global theory of orthogonal spectra that takes into account all compact Lie groups, \cite{Sch15} also contains a variant where only a fixed family of groups is considered. In particular, there is a version for the family of finite groups $\F in$. Then the main result of this paper can be stated as:
\begin{Theorem*}[Theorems \ref{theo:stablemod} and \ref{theo:quillen}] There exists a model structure on the category of symmetric spectra of spaces or simplicial sets - called the \emph{global} model structure - which is Quillen equivalent to orthogonal spectra with the $\F in$-global model structure of \cite{Sch15}.
\end{Theorem*}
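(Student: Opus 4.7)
The plan is to proceed in three stages: first introduce the right notion of global equivalence of symmetric spectra, then build a stable model structure realizing it, and finally establish the Quillen equivalence with the $\F in$-global model structure on orthogonal spectra via the prolongation adjunction.

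For the first stage, I would define for each finite group $G$ and each integer $k$ an equivariant homotopy group $\upi_k^G(X)$ of a symmetric spectrum $X$. Given a faithful $G$-action on $\{1,\ldots,m\}$, i.e.\ an embedding $G\hookrightarrow\Sigma_m$, the level $X_m$ becomes a pointed $G$-space, and one can form $\pi_k^G(\Omega^V X_{m+|V|})$ for finite-dimensional $G$-representations $V\subseteq \R^\infty$ using the iterated structure maps. The correct $\upi_k^G(X)$ should then be obtained as a colimit over the category whose objects are faithful finite $G$-sets and whose morphisms are equivariant injections, suitably combined with the colimit over $G$-representations. A map is a global equivalence if it induces isomorphisms on all $\upi_k^G$ for every finite $G$.

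For the second stage, I would follow the standard two-step procedure. First produce an $\F in$-level model structure in which a map $f\colon X\to Y$ is a weak equivalence (resp.\ fibration) iff at every level $m$ and for every subgroup $H\leq \Sigma_m$, the induced map on $H$-fixed points $f_m^H\colon X_m^H\to Y_m^H$ is a weak equivalence (resp.\ fibration) of spaces/simplicial sets; cofibrant generation is provided by pushout-products of the semi-free spectra $G_m(\Sigma_m/H_+)$ with standard (trivial) cofibrations. Then Bousfield-localize at an appropriate set of maps forcing the local objects to be global $\Omega$-spectra, the localizing maps being symmetric-spectrum analogs of the shift-desuspension maps indexed by finite $G$-sets.

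For the third stage, the natural comparison is the adjunction $\mathbb{P}\colon \mathrm{Sp}^\Sigma\rightleftarrows \mathrm{Sp}^O\colon \mathbb{U}$ in which $\mathbb{U}$ restricts an $O(n)$-action to its $\Sigma_n$-subaction. I would show that $\mathbb{U}$ preserves global $\Omega$-spectra and detects $\upi_*$-isomorphisms between them, which gives a Quillen pair, and then verify that the derived unit $X\to \mathbb{U}\mathbb{P}(X^{\mathrm{cof}})$ is a global equivalence. By reducing along cell-attachment arguments, this amounts to computing $\upi_*^G$ of $\mathbb{U}\mathbb{P}(G_m A)$ for free symmetric spectra and checking the answer matches $\upi_*^G(G_m A)$.

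The main obstacle is the well-known non-convergence of naive homotopy groups of symmetric spectra (the semi-stability phenomenon), which becomes more delicate in the global finite-group setting: the relevant colimit now has to be taken over the rich category of equivariant injections of finite $G$-sets, and controlling the transition to orthogonal spectra on non-semistable objects will require showing that the injection-indexed definition of $\upi_*^G$ on a cofibrant symmetric spectrum agrees with the derived equivariant homotopy groups of its prolongation. The heart of the argument will thus be a careful analysis of the free symmetric spectra $G_m A$ and their prolongations, where a $\Sigma_m$-equivariant computation replaces Schwede's $O(m)$-equivariant one from \cite{Sch15}, together with an injection-category cofinality statement ensuring that colimits over faithful $G$-sets compute the orthogonal spectrum's equivariant homotopy groups.
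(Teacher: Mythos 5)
There is a genuine gap in the first stage, and it is exactly the issue the paper is designed around. You propose to define a global equivalence to be a map inducing isomorphisms on the equivariant homotopy groups $\upi_k^G$ for all finite $G$. But for symmetric spectra — unlike orthogonal spectra — this is the \emph{wrong} class of maps: the paper states explicitly that ``equivariant homotopy groups cannot be used to describe global equivalences.'' Concretely, the class of global $\upi_*$-isomorphisms is strictly contained in the class of global equivalences (Proposition~\ref{prop:piisoglobal} gives one containment, and Section~\ref{sec:notsemistable} exhibits a non-semistable spectrum $F_1^{\{e\}}S^1$ where they differ). If you used $\upi_*$-isomorphisms as the weak equivalences, the resulting homotopy category would fail to identify a spectrum $X$ with its fibrant replacement $QX$ when $X$ is not globally semistable, and the comparison with orthogonal spectra would break precisely at the cell-attachment argument you propose: $L(F_m A)$ is a free orthogonal spectrum, and the unit $F_m A\to U(L(F_m A))$ is a global equivalence but typically not a global $\upi_*$-isomorphism.

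You anticipate this obstacle at the end, suggesting the remedy is to index the colimit over the category of faithful finite $G$-sets and equivariant injections. But as stated this does not change anything: the poset of subsets (ordered by inclusion) inside a fixed complete $G$-set universe is cofinal in that category for ordinary (set-valued) colimits, so you recover the naive groups. What \emph{would} work is replacing the colimit by a \emph{homotopy} colimit over the injection category — a Bökstedt-style construction yielding the ``true'' derived homotopy groups — but that is a substantial additional development you have not carried out, and it does not simplify matters. The paper instead follows the standard symmetric-spectrum route (as in \cite{HSS00} and \cite{Hau14}): it defines a $G$-stable equivalence via maps that become bijections on morphism sets into $G\Omega$-spectra in the $G$-level homotopy category, takes a global equivalence to be a map that is a $G$-stable equivalence for every finite $G$, and then localizes the global level model structure using a functorial global $\Omega$-spectrum replacement $Q$ built from the set $J_{gl}^{st}$. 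The key technical input is Proposition~\ref{prop:fixedgomega}, which identifies derived fixed points of $G\Omega$-spectra as global $\Omega$-spectra — a step your outline does not account for. Your stages two and three (level model structure, Bousfield localization, comparison via $L\dashv U$) are broadly aligned with the paper's strategy, but they cannot be executed without first repairing the definition of weak equivalence.
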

More precisely, the forgetful functor from orthogonal to symmetric spectra is the right adjoint of a Quillen equivalence. The central notion in the global model structure is that of a \emph{global equivalence of symmetric spectra}. The basic idea is the same as for orthogonal spectra: Every symmetric spectrum gives rise to a $G$-symmetric spectrum for any finite group $G$ via the trivial action, in particular it can be evaluated on all finite $G$-sets and one can define its equivariant homotopy groups. However, unlike for orthogonal spectra, equivariant homotopy groups cannot be used to describe global equivalences -- a phenomenon already present for non-equivariant symmetric spectra and for $G$-symmetric spectra over a fixed finite group $G$. Instead we make use of the notion of $G$-stable equivalence introduced in \cite{Hau14} and define a map of symmetric spectra to be a global equivalence if for all finite groups $G$ it becomes a $G$-stable equivalence when given the trivial $G$-action.
The more complicated definition of $G$-stable equivalence and hence global equivalence is the main technical difference to orthogonal spectra. The work in this paper lies in assembling the model structures of \cite{Hau14} for varying $G$ into a global one, for which Proposition \ref{prop:fixedgomega} is central.

The cofibrations in our model structure are the same as in Shipley's flat (or $\mathbb{S}$-) model structure introduced in \cite{Shi04}, which hence forms a left Bousfield localization of ours. This determines the model structure completely; the fibrant objects can be characterized as global equivariant versions of $\Omega$-spectra (Definition \ref{def:globalomega}), similarly as for orthogonal spectra.  We further show that the global model structure (or a positive version) lifts to the categories of symmetric ring spectra and commutative symmetric ring spectra (called ``ultracommutative'' in \cite{Sch15}), and more generally to categories of modules, algebras and commutative algebras over a fixed (commutative) symmetric ring spectrum.

While equivariant homotopy groups of symmetric spectra cannot be used to characterize global equivalences, they nevertheless provide an important tool. We describe some of their properties and their functoriality as the group varies. This functoriality turns out to be more involved than for orthogonal spectra, as it interacts non-trivially with the theory of (global equivariant) semistability, i.e., the relationship between ``naive'' and derived equivariant homotopy groups of symmetric spectra.

Throughout we focus on the class of all finite groups, but symmetric spectra can also be used to model global homotopy theory with respect to smaller families of groups, such as abelian finite groups or $p$-groups for a fixed prime $p$. In Appendix \ref{sec:appendix} we give a short treatment of the modifications needed to obtain such a relative theory.

The paper is \textbf{organized} as follows: In Section \ref{sec:def} we recall the definition of symmetric spectra, explain how to evaluate them on finite $G$-sets (Section \ref{sec:evaluation}) and introduce global free spectra (Section \ref{sec:free}). Section \ref{sec:model} starts with the construction of the global level model structure (Proposition \ref{prop:levmod}), introduces global equivalences (Definition \ref{def:globalequivalence}) and global $\Omega$-spectra (Definition \ref{def:globalomega}), explains the connection between the two (Proposition \ref{prop:fixedgomega}) and finally contains a proof of the stable global model structure (Theorem \ref{theo:stablemod}). In Section \ref{sec:monoidal} we construct global model structures on module, algebra and commutative algebra categories. Section \ref{sec:homgroups} deals with equivariant homotopy groups of symmetric spectra.
Their definition is given in Section \ref{sec:defhomgroups}, their functoriality is explained in Sections \ref{sec:functoriality}, \ref{sec:restrictions} and \ref{sec:transfers} and the properties of globally semistable symmetric spectra are discussed in Section \ref{sec:semistability}. In Section \ref{sec:comparison} we prove that our model structure is Quillen equivalent to $\F in$-global orthogonal spectra. Section \ref{sec:exa} discusses examples of symmetric spectra from the global point of view. Finally, Appendix \ref{sec:appendix} deals with global homotopy theory of symmetric spectra with respect to a family of finite groups. 

\textbf{Acknowledgements}: I thank my advisor Stefan Schwede for suggesting this project and for many helpful discussions and comments. This research was supported by the Deutsche Forschungsgemeinschaft Graduiertenkolleg $1150$ ``Homotopy and Cohomology''.

\section{Symmetric spectra}  \label{sec:def}
\subsection{Definition} We begin by recalling the definition of a symmetric spectrum. In this paper a topological space is always assumed to be compactly-generated and weak Hausdorff, so that the category of spaces becomes closed symmetric monoidal with respect to the cartesian product. We let $S^n$ denote the $n$-sphere, by which we mean the one-point compactification of $\R^n$ in the topological case and the $n$-fold smash product of $S^1\defeq \underline{\Delta}^1/\partial \underline{\Delta}^1$ in the simplicial case.
\begin{Def}[Symmetric spectrum]
A \emph{symmetric spectrum} $X$ of spaces or simplicial sets consists of \begin{itemize}
\item a based $\Sigma_n$-space/$\Sigma_n$-simplicial set $X_n$ and
\item a based \emph{structure map} $\sigma_n:X_n\wedge S^1 \to X_{n+1}$
\end{itemize}
for all $n\in \N$. This data has to satisfy the condition that for all $n,m\in\mathbb{N}$ the \emph{iterated structure map} \[
	\sigma_n^m: X_n\wedge S^m\cong (X_n\wedge S^1)\wedge S^{m-1}\xr{\sigma_n\wedge S^{m-1}} X_{n+1}\wedge S^{m-1}\xr{\sigma_{n+1}\wedge S^{m-2}} \hdots \xr{\sigma_{n+m-1}}  X_{n+m}
\]
is $(\Sigma_n\times \Sigma_m)$-equivariant, with $\Sigma_m$ acting on $S^m$ by permuting the coordinates.

A \emph{morphism of symmetric spectra} $f:X\to Y$ is a sequence of based $\Sigma_n$-equivariant maps $f_n:X_n\to Y_n$ such that $f_{n+1}\circ \sigma_n^{(X)}=\sigma_{n+1}^{(Y)}\circ (f_n\wedge S^1)$ for all $n\in \N$.
\end{Def}

We denote the category of symmetric spectra over spaces or simplicial sets by $Sp^\Sigma_{\T}$ and $Sp^\Sigma_{\mathcal{S}}$ respectively. In statements that make sense in both categories we sometimes simply write $Sp^{\Sigma}$ and mean they hold in either of them.

\begin{Example}[Suspension spectra] Every based space or simplicial set $A$ gives rise to a suspension symmetric spectrum $\Sigma^{\infty}A$ whose $n$-th level is $A\wedge S^n$ with $\Sigma_n$-action through $S^n$ and structure map the associativity isomorphism $(A\wedge S^n)\wedge S^1\cong A\wedge S^{n+1}$. For $A=S^0$ this gives the sphere spectrum~$\mathbb{S}$.
\end{Example}

\begin{Remark}[$G$-symmetric spectra] Throughout this paper we will often make use of the theory of $G$-symmetric spectra for a fixed finite group $G$, by which we simply mean a symmetric spectrum with a $G$-action.
\end{Remark}

\subsection{Evaluations} \label{sec:evaluation}
Let $G$ be a finite group, $M$ a finite $G$-set of cardinality $m$. We denote by $\Bij(\underline{m},M)$ the discrete space or simplicial set of bijections between the sets $\underline{m}=\{1,\hdots,m\}$ and $M$. It possesses a right $\Sigma_m$-action by precomposition and a left $G$-action by postcomposition with the action on $M$.
\begin{Def}[Evaluation] The \emph{evaluation} of a symmetric spectrum $X$ \emph{on $M$} is defined as
\begin{align*}	X(M) & \defeq X_m \wedge_{\Sigma_m} \Bij(\underline{m},M)_+ \\ & \defeq X_m \wedge \Bij(\underline{m},M)_+/((\sigma x,f)\sim(x,f\sigma),\sigma \in \Sigma_m),
 \end{align*}
with $G$-action through $M$.
\end{Def}

\begin{Remark} This is the special case of an evaluation of a $G$-symmetric spectrum on a finite $G$-set, in which case $G$ acts diagonally on $X(M)=X_m \wedge_{\Sigma_m} \Bij(\underline{m},M)_+$.
\end{Remark}

The following are two examples of evaluations:

\begin{Example} Let $A$ be a based space or simplicial set, $M$ a finite $G$-set. We denote by $S^M$ the smash product of $M$ copies of $S^1$ with permutation $G$-action, generalizing the definition of the $\Sigma_n$-permutation sphere $S^n$. Then the map $(\Sigma^\infty A)(M)\to A\wedge S^M$ that sends a class $[(a\wedge x)\wedge f]$ to $a\wedge f_*(x)$ is a $G$-isomorphism.
\end{Example}
\begin{Example} \label{exa:nontriveva} Let $G$ be the symmetric group $\Sigma_n$ and $M$ be the natural $\Sigma_n$-set $\underline{n}$, $X$ a symmetric spectrum. Then $X(\underline{n})$ is canonically isomorphic to $X_n$ with the $\Sigma_n$-action that is part of the data of the symmetric spectrum $X$. In contrast, evaluating at $\{1,\hdots,n\}$ with \emph{trivial} $\Sigma_n$-action yields $X_n$ with trivial action.
\end{Example}

Moreover, these evaluations are connected by so-called generalized structure maps: Let $G$ be a finite group, $M$ and $N$ two finite $G$-sets of cardinalities $m$ and $n$, respectively, and $X$ a symmetric spectrum. We further choose a bijection $\psi: \underline{n} \xr{\cong}N$.

\begin{Def}[Generalized structure map] The map \begin{eqnarray*}
	 \sigma_M^N: X(M)\wedge S^N & \to & X(M\sqcup N) \\
	 ([x\wedge f]\wedge s) & \mapsto & [\sigma_m^n(x\wedge \psi ^{-1}_*(s))\wedge (f\sqcup \psi)]
\end{eqnarray*}
is called the \emph{generalized structure map} of $M$ and $N$.
\end{Def}
It is straightforward to check that the generalized structure map does not depend on the choice of bijection $\psi: \underline{n} \xr{\cong}N$. Furthermore, it is $G$-equivariant for the diagonal $G$-action on $X(M)\wedge S^N$. Again this is a special case of generalized structure maps for $G$-symmetric spectra.

\subsection{Global free symmetric spectra}
\label{sec:free}
For every finite group $G$ and every finite $G$-set $M$ the above construction yields functors  \[-(M):Sp^{\Sigma}_{\T}\to G\T_* \hspace{1cm}\text{ and} \hspace{1cm} -(M):Sp^{\Sigma}_{\mathcal{S}} \to G\mathcal{S}_*. \]
These functors have left adjoints $F_M^G$, which is a consequence of the existence of a left adjoint for the analogous evaluation functor from $G$-symmetric spectra to based $G$-spaces.

Here we only give the necessary definitions to construct them, more details can be found in \cite[Sec. 2.7]{Hau14}. Given a finite $K$-set $N$ for another finite group $K$ we put
\[\mathbf{\Sigma}(M,N)\defeq \bigvee \limits_{\alpha:M\hookrightarrow N\text{ injective}}^{}S^{N-\alpha(M)}. \]
This based space/simplicial set carries a right $G$-action by precomposition on the indexing wedge and a commuting left $K$-action for which an element $k$ sends a pair $(\alpha,x\in S^{N-\alpha(M)})$ to the pair $(k\circ \alpha,k\cdot x\in S^{N-k\cdot \alpha(M)})$. Given another finite $K$-set $N'$ there is a natural $(G^{op}\times K)$-equivariant map \[ \sigma_N^{N'}:\mathbf{\Sigma}(M,N)\wedge S^{N'}\to \mathbf{\Sigma}(M,N\sqcup N'); (\alpha,x)\wedge y\mapsto (\alpha,x\wedge y). \]

\begin{Def} Let $A$ be a based $G$-space/$G$-simplicial set and $M$ a finite $G$-set. Then the \emph{global free symmetric spectrum on $A$ in level $M$} is defined as $(F_M^G(A))_n\defeq A\wedge_G \mathbf{\Sigma}(M,\underline{n})$ with structure map \[ A\wedge_G \sigma_{\underline{n}}^1:(A\wedge_G \mathbf{\Sigma}(M,\underline{n}))\wedge S^1\to A\wedge_G \mathbf{\Sigma}(M,\underline{n+1}). \]
\end{Def}
More generally, if $N$ is a finite $K$-set, the evaluation $(F_M^G(A))(N)$ is canonically isomorphic to $A\wedge_G \mathbf{\Sigma}(M,N)$ with $K$-action through $N$. The generalized structure maps arise by smashing $\sigma_N^{N'}$ with $A\wedge_G -$. Then we have:
\begin{Prop}Let $M$ be a finite $G$-set, $A$ a based $G$-space/$G$-simplicial set and $X$ a symmetric spectrum. Then the assignment
\begin{eqnarray*}\map_{Sp^{\Sigma}}(F_M^G(A),X)& \to & \map_G(A,X(M))\\ (f:F_M^G(A)\to X) & \mapsto & (A\xr{[-\wedge \{id_M\}_+]}A\wedge_G\Sigma(M,M)\xr{f(M)} X(M)) \end{eqnarray*}
is a natural isomorphism.
\end{Prop}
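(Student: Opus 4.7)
My plan is to exhibit an explicit inverse $\Psi$ to the stated assignment and verify that both composites are identities. Given a $G$-equivariant map $g: A \to X(M)$, I define $\Psi(g): F_M^G(A) \to X$ at level $n$ to be the map $A \wedge_G \mathbf{\Sigma}(M, \underline{n}) \to X(\underline{n}) = X_n$ whose restriction to the summand indexed by an injection $\alpha: M \hookrightarrow \underline{n}$ is the composite
\[
A \wedge S^{\underline{n}-\alpha(M)} \xr{g \wedge \id} X(M) \wedge S^{\underline{n}-\alpha(M)} \xr{\sigma_M^{\underline{n}-\alpha(M)}} X\bigl(M \sqcup (\underline{n}-\alpha(M))\bigr) \xr{X(\bar{\alpha})} X(\underline{n}),
\]
where $\bar{\alpha}: M \sqcup (\underline{n}-\alpha(M)) \xr{\cong} \underline{n}$ is $\alpha$ on $M$ and the inclusion on the complement.

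To check that $\Psi(g)$ is a morphism of symmetric spectra, three compatibilities have to be verified, each obtained from the corresponding property of the generalized structure maps recalled in Section~\ref{sec:evaluation}. First, the wedge of the above maps over all $\alpha$ descends to the $G$-coinvariants: the relation $a \wedge (\alpha, x) \sim ha \wedge (\alpha \circ h^{-1}, x)$ is respected because $g$ is $G$-equivariant, $\sigma_M^{-}$ is $G$-equivariant for the $G$-action through $M$, and $\overline{\alpha \circ h^{-1}} \circ (h \sqcup \id) = \bar{\alpha}$ as bijections $M \sqcup (\underline{n}-\alpha(M)) \to \underline{n}$. Second, $\Sigma_n$-equivariance at level $n$ follows from the functoriality of $X(-)$ on bijections, since a permutation $\tau\in\Sigma_n$ re-indexes the wedge via $\alpha \mapsto \tau\alpha$ and post-composes $\bar{\alpha}$ with $\tau$. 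Third, compatibility with the structure maps reduces to the associativity relation $\sigma_{M \sqcup N}^{N'} \circ (\sigma_M^N \wedge S^{N'}) = \sigma_M^{N \sqcup N'}$ for generalized structure maps, which is immediate from their definition.

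It remains to show the two composites are identities. In one direction, $\Psi(g)$ evaluated at $M$ and precomposed with $a \mapsto a \wedge \{\id_M\}_+$ selects the $\alpha = \id_M$ summand, on which $\underline{n}-\alpha(M) = \emptyset$, so $\sigma_M^\emptyset$ and $X(\bar{\id_M})$ are both identities and the result is $g$. In the other direction, given $f: F_M^G(A) \to X$ whose image is $g = f(M) \circ [- \wedge \{\id_M\}_+]$, the map $\Psi(g)$ agrees with $f$ on the $\id_M$ summand in level $m$ by the same calculation, and hence on all summands at all levels by the already established $\Sigma_n$-equivariance and structure-map compatibility, since every summand of $F_M^G(A)$ is obtained from the $\id_M$ summand in level $m$ by an iterated structure map composed with a symmetric group action. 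Naturality in $A$, $M$ and $X$ is immediate from the construction. The main technical point is the $G$-coinvariance check, where the equivariance of $\sigma_M^{-}$ has to be combined with the defining bijection $\bar{\alpha}$; this is precisely the place where the design of the $(G^{op} \times \Sigma_n)$-space $\mathbf{\Sigma}(M, \underline{n})$ is doing the essential work.
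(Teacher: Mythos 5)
Your proposal is correct, but it takes a genuinely different route from the paper. The paper's proof is one line: it observes that $F_M^G(A)$ is by definition the $G$-quotient of the free $G$-symmetric spectrum $\mathscr{F}_M(A)$, so the claimed bijection is obtained by composing the elementary adjunction $\map_{Sp^\Sigma}(\mathscr{F}_M(A)/G,X)\cong\map_{GSp^\Sigma}(\mathscr{F}_M(A),\triv_G X)$ with the already-established freeness of $\mathscr{F}_M(A)$ in $G$-symmetric spectra (\cite[Prop.~2.20]{Hau14}). Your argument instead constructs the inverse $\Psi$ by hand, verifying that the prescribed wedge of composites descends to the $G$-coinvariants, is $\Sigma_n$-equivariant, and commutes with structure maps. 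What the paper's route buys is brevity and the ability to reuse equivariant machinery; what yours buys is self-containedness and an explicit formula, which is genuinely illuminating --- in particular your coinvariance check, reducing to $\overline{\alpha\circ h^{-1}}\circ(h\sqcup\id)=\bar\alpha$, is the right computation and isolates exactly what the $(G^{\mathrm{op}}\times\Sigma_n)$-structure on $\mathbf{\Sigma}(M,\underline{n})$ is for. Two small points deserve tightening. First, the structure-map compatibility is not literally the associativity $\sigma_{M\sqcup N}^{N'}\circ(\sigma_M^N\wedge S^{N'})=\sigma_M^{N\sqcup N'}$ alone; you also need naturality of the generalized structure maps with respect to the relabeling bijections $\bar\alpha$, i.e.\ the identity $\sigma_n\circ(X(\bar\alpha)\wedge S^1)=X(\overline{\alpha'})\circ\sigma_{M\sqcup(\underline{n}-\alpha(M))}^{\{n+1\}}$. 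Second, "the $\id_M$ summand in level $m$" is not a summand of $(F_M^G(A))_m=A\wedge_G\mathbf{\Sigma}(M,\underline{m})$ unless one first picks a bijection $M\cong\underline{m}$; it is cleaner to phrase the surjectivity/injectivity via the universal element $[a\wedge\{\id_M\}_+]\in(F_M^G(A))(M)$ and observe that every $\alpha$-summand of level $n$ is the image of $A\wedge S^{\underline{n}-\alpha(M)}\hookrightarrow (F_M^G(A))(M)\wedge S^{\underline{n}-\alpha(M)}$ under a generalized structure map followed by $X(\bar\alpha)$, all of which any morphism of symmetric spectra must respect. Neither point is a real gap --- both are routine once written out --- but they are exactly the bookkeeping that the paper's approach outsources to \cite{Hau14}.
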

Here, the expression $\map_{Sp^{\Sigma}}(-,-)$ refers to the space/simplicial set of morphisms between two symmetric spectra, which is recalled in the following paragraph.
\begin{proof} This follows from \cite[Proposition 2.20]{Hau14}, since by definition $F_M^G(A)$ is the $G$-quotient of the free $G$-symmetric spectrum $\mathscr{F}_M(A)$ and we are mapping into spectra with trivial $G$-action.
\end{proof}

\subsection{Mapping spaces and spectra, smash products and shifts}
In this section we quickly recall various point-set constructions for symmetric spectra, which are all introduced in \cite{HSS00}.

\begin{Example}[(Co-)Tensoring over based spaces]
Every based space/simplicial set $A$ gives rise to a functor $A\wedge -:Sp^{\Sigma}\to Sp^{\Sigma}$ by smashing each level and structure map with $A$. It is left adjoint to $\map(A,-):Sp^{\Sigma}\to Sp^{\Sigma}$, defined via $\map(A,X)_n=\map(A,X_n)$ with structure maps adjoint to $\map(A,X_n)\xr{\widetilde{\sigma}_n} \map(A,\Omega (X_{n+1}))\cong \Omega (\map(A,X_{n+1}))$.
\end{Example}
\begin{Example}[Geometric realization] Symmetric spectra of spaces and simplicial sets are related by the adjunction of geometric realization $|.|$ and singular complex $\mathcal{S}$. Both functors are constructed by applying the space level version levelwise, making use of the fact that $|.|$ commutes with $-\wedge S^1$ and $\mathcal{S}$ commutes with $\Omega(-)$ to obtain structure maps (similarly to the previous example). 
\end{Example}

\begin{Example}[Shifts] For every natural number $n$ there is an endofunctor $\sh^n:Sp^{\Sigma}\to Sp^{\Sigma}$ defined by $\sh^n(X)_m\defeq X_{n+m}$ with $\Sigma_m$-action through the last $m$ coordinates and structure maps shifted by~$n$. There is a natural transformation $\alpha_X^n:S^n\wedge X\to \sh^n (X)$ given in level $m$ by the composite \[ S^n\wedge X_m\cong X_m\wedge S^n\xr{\sigma_n^m} X_{m+n}\xr{X(\tau_{m,n})}X_{n+m}={\sh}^n(X)_m,\]
where $\tau_{m,n}$ denotes the permutation in $\Sigma_{m+n}$ that moves the first $m$ elements $\{1,\hdots,m\}$ past the last $n$ elements $\{m+1,\hdots,m+n\}$ and preserves the order of both of these subsets.

In fact, via the same formula one can shift along arbitrary finite $G$-sets $M$, but the result $\sh^M(X)$ is in general a $G$-symmetric spectrum with non-trivial $G$-action.
\end{Example}

\begin{Example}[Mapping spaces] Given two symmetric spectra of spaces $X$ and $Y$, the set of all symmetric spectra morphisms $f:X\to Y$ carries a natural topology as a subspace of the infinite product of the levelwise mapping spaces $\map(X_n,Y_n)$, yielding a mapping space $\map_{Sp^{\Sigma}}(X,Y)$. If $X$ and $Y$ are symmetric spectra of simplicial sets, there is a mapping simplicial set $\map_{Sp^{\Sigma}}(X,Y)$ whose $n$-simplices are given by the set of symmetric spectra morphisms from $\underline{\Delta}^n_+\wedge X$ to $Y$. 
\end{Example}

\begin{Example}[Internal Hom] Combining this with the shifts above gives internal homomorphism spectra $\Hom(X,Y)$ defined by $\Hom(X,Y)_n\defeq \map_{Sp^{\Sigma}}(X,\sh^nY)$ with $\Sigma_n$-action through the first $n$ coordinates in $\sh^n(Y)$ and structure map sending a pair $(f:X\to \sh^n(Y),x\in S^1)$ to the composite  \[X\xr{x\wedge f} S^1\wedge {\sh}^n(Y)\xr{\alpha_{{\sh}^n(Y)}^1} {\sh}^{n+1}(Y).\]
\end{Example}

\begin{Example}[Smash product] As shown in \cite[Sec. 2]{HSS00}, the category of symmetric spectra carries a symmetric monoidal smash product $\wedge$ with unit $\mathbb{S}$, uniquely characterized by the fact that $-\wedge X$ is left adjoint to $\Hom(X,-)$.
\end{Example}
\section{Model structures} \label{sec:model}
In this section we construct global model structures on the category of symmetric spectra, beginning with a level model structure which is later left Bousfield localized to obtain a stable version.
\subsection{Level model structure}
We recall the standard model structure on equivariant spaces and simplicial sets:
\begin{Def} A map $f:A\to B$ of based $G$-spaces (or based $G$-simplicial sets) is called a
\begin{itemize}
 \item \emph{$G$-weak equivalence} if the map $f^H:X^H\to Y^H$ is a weak equivalence for all subgroups $H$ of~$G$.
 \item \emph{$G$-fibration} if the map $f^H:X^H\to Y^H$ is a Serre fibration (resp. Kan fibration) for all subgroups $H$ of $G$.
 \item \emph{$G$-cofibration} if it has the left lifting property with respect to all maps that are simultaneously $G$-weak equivalences and $G$-fibrations.
\end{itemize}
\end{Def}
\begin{Remark} A map of based $G$-simplicial sets is a $G$-cofibration if and only if it is degreewise injective.
\end{Remark}
It is well-known that the above classes assemble to a proper, cofibrantly generated and monoidal model structure on the category of based $G$-spaces (based $G$-simplicial sets). We make use of it to construct a global level model structure on symmetric spectra:

\begin{Def} \label{def:levmod} A morphism $f:X\to Y$ of symmetric spectra is called a
\begin{itemize}
 \item \emph{global level equivalence} if each level $f_n:X_n\to Y_n$ is a $\Sigma_n$-weak equivalence.
 \item \emph{global level fibration} if each level $f_n:X_n\to Y_n$ is a $\Sigma_n$-fibration.
 \item \emph{flat cofibration} if each latching map $\nu_n[f]:X_n\cup_{L_n (X)} L_n(Y)\to Y_n$ is a $\Sigma_n$-cofibration.
\end{itemize}
\end{Def}
For the definition of latching spaces and maps we refer to \cite[Def. 5.2.1]{HSS00} or \cite[Sec. 2.8]{Hau14}. The following gives a different interpretation of global level equivalences and fibrations:
\begin{Lemma} A morphism $f:X\to Y$ of symmetric spectra is a global level equivalence (global level fibration) if and only if for all finite groups $G$ and all finite $G$-sets $M$ the map $f(M)^G:X(M)^G\to Y(M)^G$ is a weak equivalence (resp. Serre/Kan fibration) on $G$-fixed points. 
\end{Lemma}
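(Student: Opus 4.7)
The plan is to reduce the statement to a direct identification of the $G$-fixed points of $X(M)$ with fixed points of $X_m$ under a specific subgroup of $\Sigma_m$, where $m = |M|$. Both the ``if'' and ``only if'' directions will follow from a careful look at the evaluation construction combined with the definition of a $\Sigma_n$-weak equivalence/fibration.

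For the ``if'' direction I would specialize the hypothesis to the natural action of $\Sigma_n$ on $\underline{n}$, or more flexibly, to any subgroup $H \subseteq \Sigma_n$ acting on $\underline{n}$ by restriction. By Example \ref{exa:nontriveva}, $X(\underline{n})$ with this $H$-action is just $X_n$ with the $\Sigma_n$-action restricted along $H \hookrightarrow \Sigma_n$, and hence $f(\underline{n})^H$ is exactly $f_n^H$. Running $H$ over all subgroups of $\Sigma_n$ gives that $f_n$ is a $\Sigma_n$-weak equivalence (respectively $\Sigma_n$-fibration).

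For the ``only if'' direction, given a finite $G$-set $M$ of cardinality $m$, I would pick an arbitrary bijection $\phi: \underline{m} \xrightarrow{\cong} M$ and use it to identify $\Bij(\underline{m},M)$ with $\Sigma_m$ (right $\Sigma_m$-equivariantly). The $G$-action on $\Bij(\underline{m},M)$ by post-composition corresponds under this identification to left multiplication via the homomorphism $\rho: G \to \Sigma_m$ that sends $g$ to $\phi^{-1} \circ g \circ \phi$. Consequently the canonical map $X_m \wedge \Sigma_{m,+} \to X_m$, $(x,\tau) \mapsto \tau\cdot x$, descends to a $G$-equivariant isomorphism
\[
X(M) = X_m \wedge_{\Sigma_m} \Bij(\underline{m},M)_+ \;\cong\; X_m,
\]
where the $G$-action on the right-hand side factors through $\rho$. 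Taking $G$-fixed points yields $f(M)^G \cong f_m^{\rho(G)}$, so the assumption that $f_m$ is a $\Sigma_m$-weak equivalence/fibration (which by definition gives $f_m^H$ a weak equivalence/fibration for every subgroup $H \subseteq \Sigma_m$, in particular $H = \rho(G)$) yields the desired conclusion for $f(M)^G$.

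The main (minor) obstacle is just verifying that the map $X_m \wedge \Sigma_{m,+} \to X_m$ descends to the quotient and is $G$-equivariant with respect to $\rho$; this is a direct check with the relation $(\sigma x, f)\sim(x, f\sigma)$ after translating $f\sigma$ to right multiplication in $\Sigma_m$. Once this identification is in place, the lemma is immediate from the definition of a $\Sigma_n$-weak equivalence/fibration, and the same argument handles both statements uniformly.
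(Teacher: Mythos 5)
Your proof is correct and follows essentially the same approach as the paper's: both identify $X(M)^G$ with $X_m^{\varphi(G)}$ for the homomorphism $\varphi:G\to\Sigma_m$ induced by a choice of bijection $\underline{m}\cong M$. You have simply filled in the details (the explicit isomorphism $X(M)\cong X_m$ and the check of $G$-equivariance) that the paper leaves implicit, and spelled out the ``if'' direction via Example \ref{exa:nontriveva}, which the paper treats as immediate from the same identification.
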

\begin{proof} Given a finite $G$-set $M$, any choice of bijection $\underline{m}\cong M$ defines a homomorphism $\varphi:G\to \Sigma_m$ and the $G$-fixed points $X(M)^G$ are naturally identified with $X_m^{\varphi(G)}$. This translates between the different formulations.
\end{proof}

\begin{Remark} \label{rem:gleveq} In \cite{Hau14}, a morphism $f:X\to Y$ of $G$-symmetric spectra is a $G$-level equivalence if for all subgroups $H$ of $G$ and all finite $H$-sets $M$ the map $f(M)^H:X(M)^H\to Y(M)^H$ is a weak equivalence. Hence, a morphism of symmetric spectra is a global level equivalence if and only if it is a $G$-level equivalence for all finite groups $G$. Furthermore, every flat cofibration of symmetric spectra is a $G$-flat cofibration of $G$-symmetric spectra when given the trivial $G$-action.
\end{Remark}

\begin{Prop}[Level model structure] \label{prop:levmod} The global level equivalences, global level fibrations and flat cofibrations define a proper,  cofibrantly generated and monoidal model structure on the category of symmetric spectra, called the \emph{global level model structure}.
\end{Prop}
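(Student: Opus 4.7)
The plan is to construct this model structure by cofibrant generation, mirroring Shipley's flat model structure \cite{Shi04} but with equivariant inputs at each level. For every $n$, the category of based $\Sigma_n$-spaces (or $\Sigma_n$-simplicial sets) carries the standard cofibrantly generated, proper, monoidal model structure; let $I^{\Sigma_n}$ and $J^{\Sigma_n}$ denote its sets of generating cofibrations and generating trivial cofibrations. I would then propose
\[
I\defeq \bigcup_{n\ge 0} F_{\underline{n}}^{\Sigma_n}(I^{\Sigma_n}),\qquad J\defeq \bigcup_{n\ge 0} F_{\underline{n}}^{\Sigma_n}(J^{\Sigma_n})
\]
as generating sets, where $F_{\underline{n}}^{\Sigma_n}$ is the left adjoint to evaluation at the natural $\Sigma_n$-set $\underline{n}$ from Section~\ref{sec:free}, and apply the standard recognition theorem for cofibrantly generated model categories (as in \cite[Thm.~2.1.19]{HSS00}).

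The adjunction, together with the identification $X(\underline{n})\cong X_n$ of $\Sigma_n$-objects from Example~\ref{exa:nontriveva}, immediately gives that $I$-injectives are precisely the maps that are $\Sigma_n$-trivial fibrations on every level, while $J$-injectives are precisely the global level fibrations, matching the desired classes. The usual latching-map induction, as in \cite[Prop.~5.2.2]{HSS00} but with the $\Sigma_n$-equivariant model structure replacing the non-equivariant one, identifies relative $I$-cell complexes and their retracts with flat cofibrations in the sense of Definition~\ref{def:levmod}.

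The one substantive step is to show that relative $J$-cell complexes are global level equivalences. Here I would evaluate on an arbitrary finite $K$-set $M$ using the formula $F_{\underline{n}}^{\Sigma_n}(A)(M)\cong A\wedge_{\Sigma_n}\mathbf{\Sigma}(\underline{n},M)$. The crucial observation is that the set of injections $\underline{n}\hookrightarrow M$ indexing the wedge summands of $\mathbf{\Sigma}(\underline{n},M)$ carries a free right $\Sigma_n$-action and a commuting left $K$-action, so that each cell attachment along $F_{\underline{n}}^{\Sigma_n}(j)$ for $j\in J^{\Sigma_n}$ decomposes at level $M$ into a $(\Sigma_m\times K)$-equivariant pushout obtained by smashing $j$ with a disjoint union of free $\Sigma_n$-orbits. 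Acyclicity then reduces to the pushout-product axiom for the equivariant model structure on based spaces/simplicial sets.

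The remaining assertions are formal: right properness is immediate from the levelwise characterization of global level fibrations and equivalences; left properness and the pushout-product axiom follow from the same free-action analysis combined with the compatibility $F_{\underline{n}}^{\Sigma_n}(A)\wedge F_{\underline{m}}^{\Sigma_m}(B)\cong F_{\underline{n}\sqcup \underline{m}}^{\Sigma_n\times \Sigma_m}(A\wedge B)$; and the unit $\mathbb{S}$ is visibly flat cofibrant, so the unit axiom holds. The principal obstacle in the whole argument is the acyclicity check for $J$-cell complexes, whose combinatorial core is the freeness of the $\Sigma_n$-action on injections $\underline{n}\hookrightarrow M$; once this is in hand, everything else is a routine equivariant enhancement of the arguments of \cite{HSS00} and \cite{Shi04}.
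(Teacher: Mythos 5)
Your generating sets agree with those the paper records (one checks $F_{\underline{n}}^{\Sigma_n}\bigl((\Sigma_n/H)_+\wedge A\bigr)\cong F_{\underline{n}}^H(A)$ for $A$ a trivial $H$-space, so $I=I_{gl}^{lev}$ and $J=J_{gl}^{lev}$ up to isomorphism), and your overall strategy is precisely what the paper delegates to \cite[Prop.~2.30]{Hau14} for $G=\{e\}$: the ``strong consistency condition'' invoked there is exactly the acyclicity check you single out as the substantive step. So the route is essentially the same, with yours spelled out in detail where the paper cites.

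One point in your third paragraph is imprecise and should be repaired before it would count as a proof. The claim that acyclicity of relative $J$-cell complexes ``reduces to the pushout-product axiom for the equivariant model structure on based spaces/simplicial sets'' is not quite right. What one needs is a flatness statement across two different group actions: for $j$ a $\Sigma_n$-weak equivalence of based $\Sigma_n$-spaces and $P=\mathbf{\Sigma}(\underline{n},M)$, the map $j\wedge_{\Sigma_n}P$ must be a $K$-weak equivalence, i.e.\ a weak equivalence on $L$-fixed points for every $L\leq K$. The pushout-product axiom in the category of $\Sigma_n$-spaces, or of $K$-spaces, does not by itself see this interaction, because $j$ is only a weak equivalence after testing against subgroups of $\Sigma_n$, while the fixed points being taken are for subgroups of $K$. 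The correct deduction is the one you gesture at with ``freeness'': since the $\Sigma_n$-action on $\Inj(\underline{n},M)$ is free, every $(\Sigma_n\times K)$-isotropy group of $\mathbf{\Sigma}(\underline{n},M)$ meets $\Sigma_n\times\{e\}$ trivially, hence is the graph of a homomorphism $\psi\colon L\to\Sigma_n$ from a subgroup $L\leq K$; on such a cell, $A\wedge_{\Sigma_n}((\Sigma_n^{op}\times K)/\Gamma(\psi))_+\cong K\ltimes_{L}\psi^*A$, and since $j$ is a $\Sigma_n$-weak equivalence, $\psi^*j$ is an $L$-weak equivalence and induction to $K$ preserves this. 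Gluing cell by cell then gives the $K$-weak equivalence. This is the same ``graph family'' mechanism that reappears in the proof of Proposition~\ref{prop:fixedgomega}, and it is what \cite{Hau14} packages into the consistency condition; it is a genuine extra lemma, not a consequence of the unstable pushout-product axiom alone.
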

\begin{proof} The existence of the model structure and its properness follows from \cite[Prop. 2.30]{Hau14} for $G$ the trivial group, since the strong consistency condition (\cite[Def. 2.29]{Hau14}) is satisfied. Monoidality is a consequence of \cite[Cor. 2.39]{Hau14} for each finite group separately.
\end{proof}
Since the suspension spectrum functor from based spaces (simplicial sets) is a strong monoidal left Quillen functor, the monoidality of the global model structure in particular implies that it is topological (respectively simplicial). Let $I$ and $J$ denote sets of generating cofibrations and acyclic cofibrations, respectively, for the Quillen model structure on topological spaces or simplicial sets. Then sets of generating (acyclic) cofibrations for the global level model structure are given by \[ I_{gl}^{lev}=\{F_{\underline{n}}^H(i)\ |\ n\in \N, H\leq \Sigma_n,i\in I\}\]  and\[ J_{gl}^{lev}=\{F_{\underline{n}}^H(j)\ |\ n\in \N, H\leq \Sigma_n,j\in J\}, \]
respectively, where in each case the maps $i$ and $j$ are thought of as maps of $H$-spaces with trivial action and $H$ acts on $\underline{n}$ via its embedding into $\Sigma_n$.

In order to obtain a global model structure on commutative symmetric ring spectra we will also need a positive version of the global level model structure. For this we call a morphism $f:X\to Y$ a \emph{positive global level equivalence} (resp. \emph{positive global level fibration}) if $f_n:X_n\to Y_n$ is a $\Sigma_n$-weak equivalence (resp. $\Sigma_n$-fibration) for all $n\geq 1$. Furthermore, a \emph{positive flat cofibration} is a flat cofibration which is an isomorphism in degree $0$. Then we have:
\begin{Prop}[Positive level model structure] The positive global level equivalences, positive global level fibrations and positive flat cofibrations define a proper and cofibrantly generated model structure on the category of symmetric spectra, called the \emph{positive global level model structure}.
\end{Prop}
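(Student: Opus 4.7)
The plan is to mirror the proof of Proposition \ref{prop:levmod}, replacing the generating sets $I_{gl}^{lev}$ and $J_{gl}^{lev}$ by their ``positive'' analogues
\[ I_{gl}^{lev,+}=\{F_{\underline{n}}^H(i)\ |\ n\geq 1,\ H\leq \Sigma_n,\ i\in I\} \quad \text{and} \quad J_{gl}^{lev,+}=\{F_{\underline{n}}^H(j)\ |\ n\geq 1,\ H\leq \Sigma_n,\ j\in J\}, \]
and applying the standard recognition theorem for cofibrantly generated model categories. The crucial numerical fact is that, for $n\geq 1$, the set of injections $\underline{n}\hookrightarrow \underline{0}$ is empty, so $\mathbf{\Sigma}(\underline{n},\underline{0})=*$ and hence $(F_{\underline{n}}^H(A))_0=*$ for every $A$. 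Consequently, every relative $I_{gl}^{lev,+}$-cell complex (and in particular every $J_{gl}^{lev,+}$-cell complex) is an isomorphism in level $0$.

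First I would translate the adjunction identity $\map_{Sp^{\Sigma}}(F_{\underline{n}}^H(A),X)\cong \map_H(A,X(\underline{n}))$ into the two RLP characterizations: a map $f$ has the right lifting property with respect to $J_{gl}^{lev,+}$ iff, for each $n\geq 1$ and each $H\leq\Sigma_n$, the map $f(\underline{n})^H$ is a Serre/Kan fibration, which is precisely the condition that $f$ is a positive global level fibration; likewise, RLP with respect to $I_{gl}^{lev,+}$ is equivalent to being simultaneously a positive global level fibration and a positive global level equivalence. Since positive global level equivalences are detected levelwise from level $1$ onwards, they satisfy 2-out-of-3 and closure under retracts automatically. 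The remaining axiom for Kan's recognition theorem is that $J_{gl}^{lev,+}$-cells lie among positive global level equivalences, which is inherited from the corresponding statement in the non-positive case (Proposition \ref{prop:levmod}) because $J_{gl}^{lev,+}\subseteq J_{gl}^{lev}$. With smallness of domains taken over from \cite{Hau14}, the small object argument furnishes the required factorizations and completes the construction of the model structure.

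Properness is then formal: a positive flat cofibration is in particular a flat cofibration and a positive global level fibration is, on positive levels, a level fibration; hence pushouts along positive flat cofibrations and pullbacks along positive global level fibrations preserve positive global level equivalences, using the properness of Proposition \ref{prop:levmod} level by level for $n\geq 1$. The one step I expect to require real care is the identification of the cofibrations — that is, the retracts of relative $I_{gl}^{lev,+}$-cell complexes — with the positive flat cofibrations. One direction is clear from the level-$0$ observation above together with the latching-map characterization from the non-positive case. For the reverse direction one must mimic the standard cell-attachment argument that any flat cofibration which is an isomorphism at level $0$ can be built from cells $F_{\underline{n}}^H(i)$ with $n\geq 1$; this is the positive-degree analogue of the latching-map cell decomposition used in the proof of \cite[Prop.~2.30]{Hau14} and is the only place where the positive version genuinely diverges from the non-positive one.
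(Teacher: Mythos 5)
Your proposal is correct and takes essentially the same route as the paper: both construct the positive level model structure as a cofibrantly generated model structure with generating sets restricted to $n\geq 1$ and then identify the cofibrations via the observation that $(F_{\underline{n}}^H(A))_0=*$ for $n\geq 1$. The paper compresses this into a one-line citation of \cite[Prop.~2.30]{Hau14}, which is exactly the level-model-structure machinery you are unpacking by hand; your explicit verification of the recognition-theorem axioms, the properness argument, and the cell-decomposition of positive flat cofibrations are the ingredients that reference packages up.
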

\begin{proof} As above, this model structure can be obtained via \cite[Prop. 2.30]{Hau14}. 
\end{proof}
The positive global level model structure satisfies the pushout product axiom but not the unit axiom, so it is not quite monoidal.

\subsection{Global equivalences}
In order to define the global (stable) equivalences we have to recall the notions of $G\Omega$-spectrum and $G$-stable equivalence for a fixed finite group $G$. In comparing to \cite{Hau14}, we always use the notions formed with respect to a complete $G$-set universe $\U_G$. These notions do not depend on a particular choice of such and so we omit it from the notation.
\begin{Def}[$G\Omega$-spectra] \label{def:gomega} A $G$-symmetric spectrum $X$ is called a \emph{$G\Omega$-spectrum} if for all subgroups $H$ of $G$ and all finite $H$-sets $M$ and $N$ the adjoint structure map
\[ \widetilde{\alpha}_M^N:X(M)\to \Omega^N X(M\sqcup N) \]
is an $H$-weak equivalence.
\end{Def}
In the simplicial case the expression $\Omega^N$ above is to be understood as the derived mapping space, i.e., as $\Omega^N (\mathcal{S}(|-|))$. As recalled in Remark \ref{rem:gleveq}, a map $f:X\to Y$ of $G$-symmetric spectra is a \emph{$G$-level equivalence} if for all subgroups $H\leq G$ and all finite $H$-sets $M$ the evaluation $f(M)^H:X(M)^H\to Y(M)^H$ is a weak equivalence. We denote the localization of $G$-symmetric spectra at the $G$-level equivalences by $\gamma_G:GSp^{\Sigma}\to GSp^{\Sigma}[G\text{-level eq.}^{-1}]$.
\begin{Def}[$G$-stable equivalence] \label{def:gstable} A morphism $f:X\to Y$ of $G$-symmetric spectra is a \emph{$G$-stable equivalence} if for all $G\Omega$-spectra $Z$ the map
	\[ GSp^{\Sigma}[G\text{-level eq.}^{-1}](Y,Z)\xr{\gamma_G(f)^*}GSp^{\Sigma}[G\text{-level eq.}^{-1}](X,Z) \]
is a bijection.
\end{Def}

Now we can define:
\begin{Def}[Global equivalence] \label{def:globalequivalence}A morphism $f:X\to Y$ of symmetric spectra is a \emph{global equivalence} if it is a $G$-stable equivalence for all finite groups $G$ when given the trivial $G$-action.
\end{Def}
\begin{Example} \label{exa:levelisstable} Every global level equivalence is a global equivalence, since it is a $G$-level equivalence for all finite groups $G$. In fact, every ``eventual level equivalence'' $f:X\to Y$ (in the sense that for every finite group $G$ there exists a finite $G$-set $M$ such that $f(M\sqcup N)^G:X(M\sqcup N)^G\to Y(M\sqcup N)^G$ is a weak equivalence for all finite $G$-sets $N$) is a global equivalence. This is easiest to see via Proposition~\ref{prop:piisoglobal}, since every eventual level equivalence induces an isomorphism on equivariant homotopy groups, which are discussed in Section \ref{sec:homgroups}. \end{Example}

We make the definition of a global equivalence more concrete and consider the (trivial $G$-action/$G$-fixed points) adjunction
\[ {\triv}_G:Sp^{\Sigma}\rightleftarrows GSp^{\Sigma}:(-)^{G}. \]
By definition, a map $f$ of symmetric spectra is a global equivalence if and only if $\triv_G(f)$ is a $G$-stable equivalence for all $G$. Using the global level model structure on $Sp^{\Sigma}$ and the $G$-flat level model structure on $GSp^{\Sigma}$, the adjunction forms a Quillen pair (since the trivial action functor preserves all cofibrations and weak equivalences) and so it can be derived to an adjunction between the homotopy categories
\[ \mathbb{L}{\triv}_G:Sp^{\Sigma}[\text{global level eq.}^{-1}]\rightleftarrows GSp^{\Sigma}[G\text{-level eq.}^{-1}]:(-)^{\mathbb{R}G}, \]
where the trivial action functor does not really need to be derived as it is homotopical. Using this adjunction and the definition of a $G$-stable equivalence we see:
\begin{Cor} A map $f:X\to Y$ of symmetric spectra is a global equivalence if and only if for all finite groups $G$ and all $G\Omega$-spectra $Z$ the map
\[ Sp^{\Sigma}[\textnormal{global level eq.}^{-1}](Y,Z^{\R G})\xr{\gamma(f)^*}Sp^{\Sigma}[\textnormal{global level eq.}^{-1}](X,Z^{\R G}) \]
is a bijection.
\end{Cor}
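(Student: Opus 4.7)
The plan is to unwind the definitions by applying the derived adjunction $\mathbb{L}\triv_G \dashv (-)^{\R G}$ that was just established. There is essentially no content beyond naturality of the adjunction isomorphism, so the proof will be a chain of three reformulations. First, by Definition \ref{def:globalequivalence}, the map $f$ is a global equivalence if and only if $\triv_G(f)$ is a $G$-stable equivalence for every finite group $G$. Second, by Definition \ref{def:gstable}, $\triv_G(f)$ is a $G$-stable equivalence if and only if for every $G\Omega$-spectrum $Z$ the induced map
\[ GSp^{\Sigma}[G\text{-level eq.}^{-1}](\triv_G Y, Z) \xr{\gamma_G(\triv_G f)^*} GSp^{\Sigma}[G\text{-level eq.}^{-1}](\triv_G X, Z) \]
is a bijection. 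Third, since $\triv_G$ preserves cofibrations and all weak equivalences, it is a homotopical left Quillen functor with right adjoint $(-)^G$, so $\mathbb{L}\triv_G = \triv_G$ and the total derived adjunction yields a natural isomorphism of hom sets between the corresponding homotopy categories. This isomorphism identifies the displayed map with
\[ Sp^{\Sigma}[\text{global level eq.}^{-1}](Y, Z^{\R G}) \xr{\gamma(f)^*} Sp^{\Sigma}[\text{global level eq.}^{-1}](X, Z^{\R G}), \]
and combining the three equivalences proves the claim.

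I do not expect any genuine obstacle here: the Quillen pair is explicitly set up in the paragraph preceding the corollary, and the naturality of the derived adjunction isomorphism is standard Quillen model category machinery. The only point requiring a moment's care is recording that $\triv_G$ does not need to be derived on the source object $X$ (respectively $Y$) because it is homotopical, so the composition $\gamma_G \circ \triv_G$ equals the composition of $\triv_G$ followed by $\gamma_G$ at the point-set level, which is what makes the map on the left-hand hom set correspond cleanly to $\gamma(f)^*$ on the right-hand side under the adjunction bijection.
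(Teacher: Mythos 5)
Your proof is correct and follows exactly the route the paper intends: the paper deliberately leaves the proof implicit with the phrase ``Using this adjunction and the definition of a $G$-stable equivalence we see,'' and you have simply unwound that sentence into its three constituent steps. Your note that $\triv_G$ is homotopical (since a global level equivalence clearly becomes a $G$-level equivalence under trivial action) and hence needs no cofibrant replacement when deriving is precisely the point-set care required, and naturality of the derived adjunction isomorphism then transports $\gamma_G(\triv_G f)^*$ to $\gamma(f)^*$ as you describe.
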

Here, $\gamma:Sp^{\Sigma}\to Sp^{\Sigma}[\text{global level eq.}^{-1}]$ denotes the localization functor. This may still be unsatisfactory, because the definition is not intrinsic to symmetric spectra as it is not clear which symmetric spectra arise as the derived fixed points of $G{\Omega}$-spectra. It turns out that these fixed points are again equivariant $\Omega$-spectra, in the following global sense:
\begin{Def}[Global $\Omega$-spectra] \label{def:globalomega} A symmetric spectrum $X$ is called a \emph{global $\Omega$-spectrum} if for all finite groups $G$ and all finite $G$-sets $M$ and $N$ of which $M$ is faithful the adjoint generalized structure map
\[ \widetilde{\sigma}_M^N:X(M)\to \Omega^N (X(M\sqcup N)) \]
is a $G$-weak equivalence.
\end{Def}
Again, the mapping space has to be derived in the simplicial case. In particular, every global $\Omega$-spectrum is a non-equivariant $\Omega$-spectrum. In general, a global $\Omega$-spectrum $X$ is not quite a $G\Omega$-spectrum for non-trivial finite groups $G$, as there is no faithfulness condition in Definition \ref{def:gomega}. However, every faithful finite $G$-set $N$ gives rise to a $G\Omega$-replacement $X\to \Omega^N (\sh^N (X))$ of $X$ (up to eventual $G$-level equivalence), but $\Omega^N (\sh^N (X))$ has non-trivial $G$-action and is thus not an object of the global category.
It is usually not possible to replace a symmetric spectrum by a globally equivalent symmetric spectrum which is a $G\Omega$-spectrum for all finite groups $G$ at once (the most prominent exception being the Eilenberg-MacLane spectrum $\HEM\mathbb{Z}$ for the constant global functor $\underline{\mathbb{Z}}$ discussed in \cite[Constr. V.3.21]{Sch15}).

As promised, we have:
\begin{Prop} \label{prop:fixedgomega} The derived fixed points $X^{\R G}$ of a $G\Omega$-spectrum $X$ form a global $\Omega$-spectrum. 
\end{Prop}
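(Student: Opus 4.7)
The plan is to reduce the global $\Omega$-spectrum condition on $X^{\mathbb{R}G}$ to a product-subgroup fixed-point statement, and then deduce it from the $G\Omega$-spectrum hypothesis on $X$.

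First, I would replace $X$ by a $G$-level fibrant representative using the fibrant replacement in the $G$-flat level model structure of \cite{Hau14}, which preserves the $G\Omega$-property. We may thus assume $X$ is $G$-level fibrant, so that $X^{\mathbb{R}G}$ is modelled by the naive levelwise fixed-point symmetric spectrum $X^G$ with $(X^G)_n = (X_n)^G$ (carrying the residual $\Sigma_n$-action).

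Next, fix a finite group $K$, a faithful $K$-set $M$ of cardinality $m$, and a $K$-set $N$. Because $G$ commutes with the $\Sigma_m$-action on $X_m$ and acts trivially on $K$-sets (as $G$ and $K$ are independent), the canonical map $(X^G)(P) \to X(P)^G$ is a $K$-isomorphism for every $K$-set $P$. Since $G$ also acts trivially on $S^N$, we further have $(\Omega^N X(M \sqcup N))^G \cong \Omega^N (X(M \sqcup N))^G$. Consequently, the adjoint generalized structure map of $X^G$ at $(M,N)$ is precisely the $G$-fixed points of the $(G \times K)$-equivariant map $\widetilde{\sigma}_M^N \colon X(M) \to \Omega^N X(M \sqcup N)$.

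To show this is a $K$-weak equivalence it suffices to verify, for each $H \leq K$, that the induced map on $(G \times H)$-fixed points
\[X(M)^{G \times H} \to (\Omega^N X(M \sqcup N))^{G \times H}\]
is a weak equivalence. Here the faithfulness of $M$ is essential: choosing a bijection $\underline{m} \xrightarrow{\cong} M$ and extending it to $\underline{m+n} \xrightarrow{\cong} M \sqcup N$ induces an injection $\alpha \colon H \hookrightarrow \Sigma_m$ and an extension $\alpha' \colon H \to \Sigma_{m+n}$ encoding the $H$-actions on $M$ and $M \sqcup N$. Under these identifications the map becomes a fixed-point map of the iterated adjoint structure map of $X$ for the product-type subgroups $G \times \alpha(H) \leq G \times \Sigma_m$ and $G \times \alpha'(H) \leq G \times \Sigma_{m+n}$.

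The hard part will be to deduce this weak equivalence from the $G\Omega$-hypothesis. The subtlety is that the condition of Definition~\ref{def:gomega} directly provides weak equivalences only for ``graph-type'' subgroups of $G \times \Sigma_m$ (those of the form $\{(h_0, \varphi(h_0)) : h_0 \in H_0\}$ for $H_0 \leq G$ and $\varphi \colon H_0 \to \Sigma_m$), whereas we need it for the ``product-type'' subgroup $G \times \alpha(H)$. Bridging this gap should combine the $G\Omega$-condition with the full $(G \times \Sigma_m)$-equivariance of the structure maps of $X$ and the faithful embedding $\alpha$; I would expect to follow the strategy of the analogous fixed-point computations in \cite[Sec. 2.8]{Hau14}, perhaps by viewing $X$ as a $(G \times H)$-symmetric spectrum with trivial $H$-action and transferring the $G\Omega$-condition into a statement about the relevant product-subgroup fixed points. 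Without the faithfulness of $M$, the embedding $\alpha$ would fail to be injective and this translation would not be available.
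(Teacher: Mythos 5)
Your setup is correct and you have correctly identified the crux of the problem: the $G\Omega$-condition only furnishes weak equivalences on fixed points of graph-type subgroups of $G\times\Sigma_m$ (those meeting $\{e\}\times\Sigma_m$ trivially), whereas you need a weak equivalence on fixed points of the product-type subgroup $G\times\alpha(H)$. But you leave this crucial step unproved, and the mechanism you vaguely sketch (``combine the $G\Omega$-condition with the full $(G\times\Sigma_n)$-equivariance of the structure maps'') does not by itself close the gap — a $(G\times\Sigma_n)$-equivariant map can easily be a weak equivalence on graph-type fixed points without being one on product-type fixed points.

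The missing ingredient is the \emph{cofreeness} built into the $G$-flat fibrant replacement, which you chose but did not exploit. In the $G$-flat level model structure, a fibrant object $X^f$ has each level $X^f_n$ cofree with respect to the family $\mathcal{F}^{G,\Sigma_n}$ of graph-type subgroups, i.e., $X^f_n\to\map(E\mathcal{F}^{G,\Sigma_n}_+,X^f_n)$ is a $(G\times\Sigma_n)$-weak equivalence. Endowing $X^f(M)$ and $\Omega^N X^f(M\sqcup N)$ with the $(G\times K)$-actions as you do, one then argues two things: first, $\widetilde{\sigma}_M^N$ is an $\mathcal{F}^{G,K}$-weak equivalence, which follows exactly from the $G\Omega$-condition by pulling back along each graph $\{(h,\varphi(h))\}$ as you suggest; second, both sides are $\mathcal{F}^{G,K}$-cofree, which uses that restricting $E\mathcal{F}^{G,\Sigma_m}$ along $\id\times\psi$ for an \emph{injective} $\psi:K\to\Sigma_m$ yields a model of $E\mathcal{F}^{G,K}$ — this is precisely where the faithfulness of $M$ enters. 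Since an $\mathcal{F}^{G,K}$-weak equivalence between $\mathcal{F}^{G,K}$-cofree $(G\times K)$-spaces is automatically a $(G\times K)$-weak equivalence, one obtains the required $(G\times K)$-weak equivalence and hence, on $G$-fixed points, the desired $K$-weak equivalence. Without the cofreeness, the passage from graph-type to product-type fixed points simply fails.
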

\begin{proof}As remarked above, we can use a $G$-flat fibrant replacement $X^f$ of $X$ to compute its right derived fixed points. We now recall from \cite[Sec. 2.9]{Hau14} what it means for a $G$-symmetric spectrum to be $G$-flat fibrant. Given two groups $G$ and $K$ we let $\mathcal{F}^{G,K}$ denote the family of subgroups of $G\times K$ whose intersection with $\{e\}\times K$ is trivial. Every such subgroup is of the form $\{(h,\varphi(h))\ |\ h\in H\}$ for a unique subgroup $H$ of $G$ and group homomorphism $\varphi:H\to K$. Then the fact that $X^f$ is $G$-flat fibrant means that each level $X^f_n$ is $(G\times \Sigma_n)$-fibrant and in addition cofree with respect to the family $\mathcal{F}^{G,\Sigma_n}$, i.e., the map $X^f_n\to \map(E\mathcal{F}^{G,\Sigma_n}_+,X^f_n)$ is a $(G\times \Sigma_n$)-weak equivalence, where $E\mathcal{F}^{G,\Sigma_n}$ is a universal space for $\mathcal{F}^{G,\Sigma_n}$ (cf. \cite[Defs. 2.25 and 1.20]{Hau14}).

We now show that $(X^f)^G$ forms a global $\Omega$-spectrum. Let $K$ be a finite group, $M$ and $N$ be finite $K$-sets of which $M$ faithful (and of cardinality $m$). We consider the evaluation $X^f(M)=X^f_m\wedge_{\Sigma_m} (\Bij(\underline{m},M)_+)$ and give it a $(G\times K)$-action by letting $G$ act through $X^f_m$ and $K$ through $M$. Likewise, we obtain a $(G\times K)$-action on $X^f(M\sqcup N)$ and hence also on $\Omega^N (X^f(M\sqcup N))$.

We claim the following:
\begin{enumerate}[(i)]
 \item The map $\widetilde{\sigma}_M^N:X^f(M)\to \Omega^N(X^f(M\sqcup N))$ is an $\mathcal{F}^{G,K}$-weak equivalence, i.e., it induces a weak equivalence on all fixed points for subgroups in the family $\mathcal{F}^{G,K}$.
 \item Both $X^f(M)$ and $\Omega^N(X^f(M\sqcup N))$ are $\mathcal{F}^{G,K}$-cofree.
\end{enumerate}
Together these imply that $\widetilde{\sigma}_M^N:X^f(M)\to \Omega^N(X^f(M\sqcup N))$ is a $(G\times K)$-weak equivalence, as every $\F^{G,K}$-weak equivalence between $\F^{G,K}$-cofree $(G\times K)$-spaces is a $(G\times K)$-weak equivalence. In particular, the induced map on $G$-fixed points $(\widetilde{\sigma}_M^N)^G:(X^f)^G(M)\to \Omega^N ((X^f)^G(M\sqcup N))$ is a $K$-weak equivalence, which proves the proposition. 

Hence it remains to show the claims, we begin with the first one. We let $H$ be a subgroup of $G$ and $\varphi:H\to K$ a group homomorphism. Then the composite $H\to K\to \Sigma_M$ defines an $H$-action on $M$ (and likewise on $N$), which we denote by $\varphi^*(M)$. Pulling back $X^f(M)$ and $X^f(M\sqcup N)$ along the graph of $\varphi$ yields the $H$-spaces $X^f(\varphi^*(M))$ and $X^f(\varphi^*(M\sqcup N))$. In other words, we have to check whether the adjoint structure map $\widetilde{\sigma}_m^n:X^f(\varphi^*(M))\to \Omega^{\varphi^*(N)}(X^f(\varphi^*(M\sqcup N)))$ induces a weak equivalence on $H$-fixed points, but this is the case since $X^f$ is a $G\Omega$-spectrum.

The second claim follows from the observation that when restricting $E\mathcal{F}^{G,\Sigma_m}$ along $id\times \psi$ for an injective group homomorphism $\psi:K\to \Sigma_m$ one obtains a model for $E\mathcal{F}^{G,K}$. This finishes the proof.
\end{proof}
It will be a consequence of Theorem \ref{theo:stablemod} that global $\Omega$-spectra are precisely the local objects with respect to the class of global equivalences. In other words, one could alternatively characterize global equivalences as those morphisms that induce bijections on all morphism sets into global $\Omega$-spectra in the global level homotopy category. 

\subsection{Stable model structure} \label{sec:stablemod} In this section we introduce the global stable model structure on symmetric spectra. We begin by constructing a global $\Omega$-spectrum replacement functor up to natural global equivalence.

For this we let $G$ be a finite group, $M$ and $N$ two finite $G$-sets and define $\lambda_M^N:F^G_{M\sqcup N}(S^N)\to F_M^G(S^0)$ to be adjoint to the embedding $S^N\hookrightarrow \mathbf{\Sigma}(M,M\sqcup N)/G=(F_M^G(S^0))(M\sqcup N)$ associated to the inclusion $M\hookrightarrow M\sqcup N$ (cf. Section \ref{sec:free} for the definition of $\mathbf{\Sigma}(-,-)$ and global free symmetric spectra). Under the adjunction isomorphism, $\lambda_M^N$ represents the adjoint generalized structure map on $G$-fixed points
\[ \map_{Sp^{\Sigma}}(F^G_M (S^0),X)\cong X(M)^G\xr{(\widetilde{\sigma}_M^N)^G} (\Omega^N X(M\sqcup N))^G \cong \map_{Sp^{\Sigma}}(F^G_{M\sqcup N}(S^N),X). \]
The morphisms $\lambda_M^N$ are usually not cofibrations, so we factor them as
\[ F^G_{M\sqcup N}(S^N)\xr{\overline{\lambda}_M^N} \Cyl(\lambda_M^N)\xr{r_M^N} F_M^G(S^0)\]
via the levelwise mapping cylinder $\Cyl(-)$. It is a formal consequence, as explained in the proof of \cite[Lemma 3.4.10]{HSS00},  that $\overline{\lambda}_M^N$ is a flat cofibration, since the global level model structure is topological/simplicial. Finally, we define 
\[ J_{gl}^{st} =\{ i\square \overline{\lambda}_M^N\ |\ i\in I,G\text{ finite}, M,N\text{ finite $G$-sets with $M$ faithful} \}\cup J_{gl}^{lev},
\]
where $I$ is a set of generating cofibrations of the Quillen model structure on based spaces/simplicial sets. The notation $f\square g$ stands for the pushout product $(A\wedge Y)\cup_{A\wedge X}(B\wedge X)\to (B\wedge Y)$ of a map $f:A\to B$ of based spaces/simplicial sets with a morphism $g:X\to Y$ of symmetric spectra.
More precisely, we only include $i\square \overline{\lambda}_M^N$ for a chosen system of representatives of isomorphism classes of triples $(G,M,N)$ to ensure that $J_{gl}^{st}$ is a set. Then we have:
\begin{Prop} \label{prop:omegalifting} For a symmetric spectrum $X$ the following are equivalent:
\begin{itemize}
 \item $X$ is a level fibrant global $\Omega$-spectrum.
 \item $X$ has the right lifting property with respect to the set $J_{gl}^{st}$.
\end{itemize}
\end{Prop}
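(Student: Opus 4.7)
The plan is to decouple the two halves of $J_{gl}^{st}$ and handle them separately, then for the nontrivial half translate the lifting property through a standard tensor/cotensor adjunction and an SM7 argument.

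First, by Proposition \ref{prop:levmod} the set $J_{gl}^{lev}$ is a set of generating acyclic cofibrations for the global level model structure, so $X$ has the right lifting property against $J_{gl}^{lev}$ if and only if $X\to *$ is a global level fibration, i.e.\ $X$ is level fibrant. This disposes of one factor of both conditions at once, and from here on I assume $X$ to be level fibrant.

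Second, since symmetric spectra are tensored over based spaces/simplicial sets, the usual adjunction identifies the right lifting property of $X$ against all $i\square \overline{\lambda}_M^N$ with $i\in I$ with the statement that the map
\[
  \map_{Sp^\Sigma}(\overline{\lambda}_M^N,X):\map_{Sp^\Sigma}(\Cyl(\lambda_M^N),X)\longrightarrow \map_{Sp^\Sigma}(F_{M\sqcup N}^G(S^N),X)
\]
is an acyclic Serre/Kan fibration of based spaces/simplicial sets. Because the global level model structure is topological/simplicial (Proposition \ref{prop:levmod}) and $\overline{\lambda}_M^N$ is a flat cofibration, the SM7 property guarantees that this map is already a fibration; so what needs to be checked is precisely that it is a weak equivalence for every triple $(G,M,N)$ with $M$ faithful.

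Third, I identify this map. The retraction $r_M^N:\Cyl(\lambda_M^N)\to F_M^G(S^0)$ is a homotopy equivalence of symmetric spectra, and the target is level-fibrant, so it induces a weak equivalence of mapping spaces. Combining this with the free/evaluation adjunction gives natural identifications
\[
  \map_{Sp^\Sigma}(\Cyl(\lambda_M^N),X)\simeq \map_{Sp^\Sigma}(F_M^G(S^0),X)\cong X(M)^G
\]
and $\map_{Sp^\Sigma}(F_{M\sqcup N}^G(S^N),X)\cong (\Omega^N X(M\sqcup N))^G$ (derived in the simplicial case, which is automatic since $X$ is level fibrant), under which the map of mapping spaces becomes $(\widetilde{\sigma}_M^N)^G$ by the very definition of $\lambda_M^N$.

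Finally, I translate the resulting fixed-point condition into the $G$-weak equivalence condition of Definition \ref{def:globalomega}. Requiring $(\widetilde{\sigma}_M^N)^K$ to be a weak equivalence for every finite group $K$, every faithful finite $K$-set $M$ and every finite $K$-set $N$ is equivalent to requiring $\widetilde{\sigma}_M^N$ to be a $G$-weak equivalence for every such triple $(G,M,N)$, since any subgroup $H\leq G$ acts faithfully on a faithful $G$-set by restriction, so the case $(K,M,N)=(H,\res^G_H M,\res^G_H N)$ is again of the allowed form and captures the $H$-fixed-point part of the $G$-weak equivalence. The main obstacle I expect is making the mapping-space identification airtight in the simplicial setting, where Definition \ref{def:globalomega} uses the derived $\Omega^N$; level fibrancy of $X$ combined with cofibrancy of the global free spectrum $F_{M\sqcup N}^G(S^N)$ is precisely what makes the naive simplicial mapping space compute the correct derived object.
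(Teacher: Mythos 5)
Your proposal is correct and follows essentially the same path as the paper's proof: split off the level-fibrancy condition via $J_{gl}^{lev}$, use the tensor/cotensor adjunction to turn lifting against $\{i\square\overline{\lambda}_M^N\}_{i\in I}$ into the map $\map_{Sp^\Sigma}(\overline{\lambda}_M^N,X)$ being an acyclic fibration, observe via SM7 that fibrancy is automatic, replace $\overline{\lambda}_M^N$ by $\lambda_M^N$ using the homotopy equivalence $r_M^N$, and identify the resulting map with $(\widetilde{\sigma}_M^N)^G$ via the free/evaluation adjunction. The only thing you add beyond the paper's wording is the explicit final translation from the family of $G$-fixed-point conditions over all $(G,M,N)$ with $M$ faithful to the $G$-weak equivalence condition in Definition \ref{def:globalomega} (via restriction to subgroups), which the paper leaves implicit; this is a reasonable point to flag and your argument for it is correct.
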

\begin{proof} We already know that $X$ is global level fibrant if and only if it has the right lifting property with respect to $J_{gl}^{st}$. By adjunction, $X$ has the right lifting property with respect to $\{i\square \overline{\lambda}_M^N\}_{i\in I}$ if and only if 
\[ \map_{Sp^\Sigma}(\overline{\lambda}_M^N,X): \map_{Sp^\Sigma}(\Cyl(\lambda_M^N),X)\to \map_{Sp^\Sigma}(F^G_{M\sqcup N}(S^N),X) \]
has the right lifting property with respect to the set $I$. Since the global level model structure is topological/simplicial, this map is always a Serre/Kan fibration. Hence, it has the right lifting property with respect to $I$ if and only if it is a weak homotopy equivalence. Since $r_M^N$ is a homotopy equivalence of symmetric spectra, this in turn is equivalent to
\[ \map_{Sp^\Sigma}(F^G_M (S^0),X)\xr{\map_{Sp^\Sigma}(\lambda_M^N,X)} \map_{Sp^\Sigma}(F^G_{M\sqcup N}(S^N),X) \]
being a weak homotopy equivalence. As remarked above, this map can be identified with the $G$-fixed points of the adjoint generalized structure map $\widetilde{\sigma}_M^N$ of $X$, which finishes the proof.
\end{proof}

\begin{Cor} If $M$ is faithful, then $\lambda_M^N$ is a global equivalence.
\end{Cor}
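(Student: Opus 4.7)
The plan is to invoke the corollary stated just before Proposition \ref{prop:omegalifting}, which says that a map of symmetric spectra is a global equivalence if and only if it induces a bijection on $[-, Z^{\R K}]$ in the global level homotopy category for every finite group $K$ and every $K\Omega$-spectrum $Z$. By Proposition \ref{prop:fixedgomega} each $Z^{\R K}$ is a global $\Omega$-spectrum, and from the construction of derived fixed points via a $K$-flat fibrant replacement it is automatically level fibrant as an underlying symmetric spectrum. So it will be enough to show that for every level fibrant global $\Omega$-spectrum $W$, the map
\[ [F_M^G(S^0),W] \xrightarrow{(\lambda_M^N)^*} [F_{M\sqcup N}^G(S^N),W] \]
in the global level homotopy category is a bijection.

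Next I would identify both Hom-sets via the free/evaluation adjunction of Section \ref{sec:free}. Since $F_M^G(S^0)$ and $F_{M\sqcup N}^G(S^N)$ are flat cofibrant, being global free spectra on cofibrant based $G$-spaces/simplicial sets, and since $W$ is level fibrant, the adjunction descends to natural bijections
\[ [F_M^G(S^0),W] \cong \pi_0(W(M)^G), \qquad [F_{M\sqcup N}^G(S^N),W] \cong \pi_0\bigl((\Omega^N W(M\sqcup N))^G\bigr). \]
In the simplicial case the $\Omega^N$ above is the pointed simplicial mapping space into $W(M\sqcup N)$, which is Kan by level fibrancy, so no further derivation is needed. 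As already noted in the paragraph preceding the definition of $\lambda_M^N$, the map $(\lambda_M^N)^*$ corresponds under these identifications to the map induced on $\pi_0$ of $G$-fixed points by the adjoint generalized structure map $\widetilde{\sigma}_M^N\colon W(M)\to \Omega^N W(M\sqcup N)$.

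To finish I would invoke Definition \ref{def:globalomega} directly: because $W$ is a global $\Omega$-spectrum and $M$ is faithful, $\widetilde{\sigma}_M^N$ is by definition a $G$-weak equivalence. It therefore induces a weak equivalence on $G$-fixed points and, in particular, a bijection on $\pi_0$, which is exactly what was needed.

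The main obstacle I anticipate is the homotopy-theoretic bookkeeping in the middle step --- in particular ensuring in the simplicial setting that the adjoint mapping space $\map_G(S^N,W(M\sqcup N))$ really models the derived loop space $\Omega^N$ appearing in Definition \ref{def:globalomega}, and that the point-set isomorphism from Section \ref{sec:free} gives a \emph{homotopically} meaningful bijection after passing to $[-,-]$. Both issues are resolved by restricting to level fibrant $W$, which is automatic because $Z^{\R K}$ is the $K$-fixed points of a $K$-flat fibrant spectrum and hence level fibrant in the underlying global sense.
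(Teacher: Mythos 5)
Your proof is correct and takes essentially the same approach as the paper: the paper's terse citation of Propositions~\ref{prop:fixedgomega} and~\ref{prop:omegalifting} unfolds to exactly the argument you give, since the proof of Proposition~\ref{prop:omegalifting} is precisely the observation that $\map_{Sp^\Sigma}(\lambda_M^N,W)$ is identified via the free/evaluation adjunction with $(\widetilde{\sigma}_M^N)^G$. The only slip is a minor geographic one --- the corollary you invoke characterizing global equivalences by mapping into $Z^{\R K}$ appears just after Definition~\ref{def:globalequivalence}, not just before Proposition~\ref{prop:omegalifting} --- which has no effect on the argument.
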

\begin{proof} This is a consequence of Propositions \ref{prop:fixedgomega} and \ref{prop:omegalifting} and the fact that $F_{M\sqcup N}^G (S^N)$ and $F_M^G (S^0)$ are flat.
\end{proof}

Since the global level model structure is topological/simplicial, it follows that every morphism in $J^{st}_{gl}$ is a flat cofibration. Furthermore, all domains and codomains of morphisms in $J^{st}_{gl}$ are small with respect to countably infinite sequences of flat cofibrations. So we can apply the small object argument (cf. \cite[Sec. 7.12]{DS95}) to obtain a functor $Q:Sp^{\Sigma}\to Sp^{\Sigma}$ with image in global $\Omega$-spectra and a natural relative $J^{st}_{gl}$-cell complex $q:\id\to Q$.
Since every morphism in $J^{st}_{gl}$ is a flat cofibration and global equivalence, it follows from \cite[Props. 4.2, 4.4 and 4.5]{Hau14} applied to each finite group separately that every relative $J^{st}_{gl}$-cell complex is a global equivalence. In particular, the morphisms $q_X:X\to QX$ are always global equivalences. This also implies that $Q$ preserves global equivalences by 2-out-of-3. Before we use these properties to construct the global stable model structure we need one more lemma:

\begin{Lemma} \label{lem:leveqomega} Every global equivalence between global $\Omega$-spectra is a global level equivalence. 
\end{Lemma}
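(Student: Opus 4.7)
The plan is to deduce the lemma directly from the Corollary stated just after Definition \ref{def:globalequivalence}, using only its specialization to the trivial group. That Corollary tells us that for a global equivalence $f \colon X \to Y$, pullback along $f$ induces a bijection $Sp^{\Sigma}[\textnormal{global level eq.}^{-1}](Y, Z^{\R G}) \to Sp^{\Sigma}[\textnormal{global level eq.}^{-1}](X, Z^{\R G})$ for every finite group $G$ and every $G\Omega$-spectrum $Z$.

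First I would observe that every global $\Omega$-spectrum is in particular a non-equivariant $\Omega$-spectrum. Indeed, taking $G$ to be the trivial group in Definition \ref{def:globalomega}, the faithfulness condition is vacuous and the condition on $\widetilde{\sigma}_M^N$ for finite sets $M, N$ reduces to the usual $\Omega$-spectrum condition under the identification $X(\underline{m}) \cong X_m$. Moreover, for $G$ trivial the functors ${\triv}_G$ and $(-)^G$ are both the identity, so $Z^{\R G}$ is canonically isomorphic to $Z$ in the global level homotopy category (via the natural global level equivalence to a level fibrant replacement). Hence $X$ and $Y$, being global $\Omega$-spectra, are legitimate choices for $Z$ in the trivial-group case of the Corollary.

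Applying that case of the Corollary to $f$ with $Z = X$ and then $Z = Y$ yields that pullback along $f$ induces bijections from $Sp^{\Sigma}[\textnormal{global level eq.}^{-1}](Y, X)$ to $Sp^{\Sigma}[\textnormal{global level eq.}^{-1}](X, X)$ and from $Sp^{\Sigma}[\textnormal{global level eq.}^{-1}](Y, Y)$ to $Sp^{\Sigma}[\textnormal{global level eq.}^{-1}](X, Y)$. The standard preimage-of-identity argument now produces $g \colon Y \to X$ in the global level homotopy category with $g \circ f = \id_X$; and since both $f \circ g$ and $\id_Y$ pull back along $f$ to $f$, injectivity of the second bijection forces $f \circ g = \id_Y$ in that homotopy category. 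Hence $f$ becomes an isomorphism in $Sp^{\Sigma}[\textnormal{global level eq.}^{-1}]$, and by the standard model-categorical fact that weak equivalences are precisely the morphisms inverted by the localization to the homotopy category, $f$ is a global level equivalence.

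There is no real obstacle: all three ingredients---the Corollary, the observation that global $\Omega$-spectra are $\Omega$-spectra, and the classical identification of weak equivalences with isomorphisms in the homotopy category---are either already available or classical. The main thing to be careful about is to avoid circularity with respect to Theorem \ref{theo:stablemod}; but the Corollary rests only on the global level and $G$-flat level model structures together with the $({\triv}_G, (-)^G)$ Quillen adjunction, so no putative global stable model structure is used.
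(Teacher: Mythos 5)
There is a genuine gap, concentrated in the sentence claiming that for $G=\{e\}$ the derived fixed points $Z^{\mathbb{R}G}$ are isomorphic to $Z$ in the global level homotopy category. The right derived functor $(-)^{\mathbb{R}\{e\}}$ is computed by fibrantly replacing $Z$ in the $\{e\}$\emph{-flat level model structure} on $Sp^{\Sigma}$, whose weak equivalences are \emph{non-equivariant} level equivalences. That model structure is a proper left Bousfield localization of the global level model structure (same flat cofibrations, strictly more weak equivalences), and its fibrant objects are levelwise cofree, i.e.\ $Z^f_n\to\map(E\Sigma_{n+},Z^f_n)$ is required to be a $\Sigma_n$-weak equivalence. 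The fibrant replacement map $Z\to Z^f$ is therefore only a non-equivariant level equivalence; it is emphatically not a global level equivalence, since it destroys the $\Sigma_n$-equivariant homotopy type of the levels. Hence $Z^{\mathbb{R}\{e\}}$ and $Z$ need not be isomorphic in $Sp^{\Sigma}[\text{global level eq.}^{-1}]$, and you cannot replace $X^{\mathbb{R}\{e\}}$ (resp.\ $Y^{\mathbb{R}\{e\}}$) by $X$ (resp.\ $Y$) in the two instances of the Corollary. Without that identification, the bijections you obtain are into $X^{\mathbb{R}\{e\}}$ and $Y^{\mathbb{R}\{e\}}$, which does not let you produce an inverse to $f$ itself.

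A further warning sign is that your argument uses only the $G=\{e\}$ part of the definition of global $\Omega$-spectrum: it never exploits the equivariant $\Omega$-spectrum conditions at faithful $G$-sets for nontrivial $G$. But a global level equivalence must be a $\Sigma_n$-weak equivalence in every level, so it is implausible that the statement could follow from the trivial-group case alone. The paper's proof uses the faithfulness condition essentially: for each $n$ it passes from $X$ to the $\Sigma_n$-symmetric spectrum $\Omega^{\underline{n}}(\sh^{\underline{n}}X)$, which is an honest $\Sigma_n\Omega$-spectrum precisely because $\underline{n}$ is faithful, and only then runs a Yoneda-type inverse argument (the same ``preimage of the identity'' trick you use, which is fine in itself) in the category of $\Sigma_n$-symmetric spectra. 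Your non-circularity observation is correct, but the argument needs this passage to nontrivial groups, and it needs the genuine $G\Omega$-spectrum condition that your reduction throws away.
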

\begin{proof} Let $f:X\to Y$ be a global equivalence of global $\Omega$-spectra. We have to show that each $f_n$ is a $\Sigma_n$-weak equivalence. For this we again denote by $\underline{n}$ the tautological $\Sigma_n$-set and consider the following diagram of $\Sigma_n$-symmetric spectra:
\[  \xymatrix{ X \ar[r]^-{\alpha_X^{\underline{n}}} \ar[d]_f & \Omega^{\underline{n}} (sh^{\underline{n}} X) \ar[d]^{\Omega^{\underline{n}} (sh^{\underline{n}} f)}\\
	Y \ar[r]_-{\alpha_Y^{\underline{n}}} & \Omega^{\underline{n}} (sh^{\underline{n}} Y) } 
\]
Since $X$ and $Y$ are global $\Omega$-spectra the horizontal arrows $\alpha_X^{\underline{n}}$ and $\alpha_Y^{\underline{n}}$ induce $\Sigma_n$-weak equivalences on all evaluations at faithful $\Sigma_n$-sets. In particular, using Example \ref{exa:levelisstable} we see that they are both $\Sigma_n$-stable equivalences and so $\Omega^{\underline{n}} (sh^{\underline{n}} f)$ is also a $\Sigma_n$-stable equivalence. Furthermore, since $\underline{n}$ is a faithful $\Sigma_n$-set, the $\Sigma_n$-symmetric spectra $\Omega^{\underline{n}} (sh^{\underline{n}} X)$ and $\Omega^{\underline{n}} (sh^{\underline{n}} Y)$ are $\Sigma_n\Omega$-spectra. This implies that $\Omega^{\underline{n}} (sh^{\underline{n}} f)$ is even a $\Sigma_n$-level equivalence by the Yoneda lemma. In particular, it induces a $\Sigma_n$-weak equivalence when evaluated on $\underline{n}$ and hence so does $f$ (again using that the horizontal arrows induce $\Sigma_n$-weak equivalences on all faithful evaluations). This finishes the proof.
\end{proof}
Finally, a morphism of symmetric spectra is called a \emph{(positive) global fibration} if it has the right lifting property with respect to all morphisms that are (positive) flat cofibrations and global equivalences. Then we have:
\begin{Theorem}[Global model structures] \label{theo:stablemod} The global equivalences, (positive) global fibrations and (positive) flat cofibrations define a proper, cofibrantly generated and monoidal model structure on the category of symmetric spectra, called the \emph{(positive) global stable model structure}.

Moreover, the fibrant objects of the (positive) global stable model structure are precisely the (positive) global $\Omega$-spectra.
\end{Theorem}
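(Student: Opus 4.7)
The plan is to build the (positive) global stable model structure as a left Bousfield localization of the (positive) global level model structure of Proposition \ref{prop:levmod}, using the functor $Q$ and the set $J^{st}_{gl}$ assembled just before the theorem statement. The key auxiliary observation is that a morphism $f$ is a global equivalence if and only if $Qf$ is a global level equivalence. Indeed, $Q$ preserves global equivalences, so $Qf$ is always a global equivalence; since $Qf$ is a map between global $\Omega$-spectra, Lemma \ref{lem:leveqomega} then gives that $Qf$ is a level equivalence. Conversely, if $Qf$ is a level equivalence then it is a global equivalence, and the 2-out-of-3 property for $G$-stable equivalences (inherited from \cite{Hau14}) applied through the square formed by the natural global equivalences $q_X, q_Y$ shows that $f$ is a global equivalence. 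This characterization immediately yields 2-out-of-3 and retract closure for global equivalences.

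Both factorizations will come from the small object argument. The (flat cofibration, level acyclic fibration) factorization is inherited from Proposition \ref{prop:levmod}; level acyclic fibrations are in particular global equivalences, so this will serve as the trivial fibration factorization once the latter is identified. For the (acyclic cofibration, fibration) factorization one applies the small object argument to $J^{st}_{gl}$: as noted just before the theorem, every morphism in $J^{st}_{gl}$ is a flat cofibration and a global equivalence, and by \cite[Props. 4.2, 4.4, 4.5]{Hau14} applied separately for each finite group $G$ (via the trivial action functor, which preserves flat cofibrations, pushouts and sequential colimits), every relative $J^{st}_{gl}$-cell complex is a flat cofibration and a global equivalence. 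Define global fibrations as those maps with the right lifting property against all flat cofibrations that are global equivalences. The standard retract argument then identifies global fibrations with $J^{st}_{gl}$-injectives: given an acyclic cofibration $i$, factor it as an element of $J^{st}_{gl}$-cell followed by a $J^{st}_{gl}$-injective, observe by 2-out-of-3 that the latter is itself a global equivalence, and use a lift to exhibit $i$ as a retract of the $J^{st}_{gl}$-cell part. This completes the model category axioms.

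For the fibrant object characterization, $X \to \ast$ is a global fibration iff it is $J^{st}_{gl}$-injective, which by Proposition \ref{prop:omegalifting} is iff $X$ is a level-fibrant global $\Omega$-spectrum; in the topological case every space is fibrant, and in the simplicial case the derived interpretation of $\Omega^N$ in Definition \ref{def:globalomega} reconciles the two notions, so the fibrant objects are precisely the global $\Omega$-spectra. Left properness follows from left properness of the level structure together with the closure of global equivalences under pushouts along flat cofibrations already used for $J^{st}_{gl}$-cells; right properness reduces to $G$-equivariant right properness from \cite{Hau14} applied for each finite $G$. Monoidality reduces similarly: the pushout product of flat cofibrations is a flat cofibration by Proposition \ref{prop:levmod}, and when one factor is a global equivalence the result is so by the monoidality of each $G$-model structure in \cite{Hau14}. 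The positive variant proceeds identically, starting from the positive level model structure and using the positive analogues of the generating sets that discard level $0$. The technical heart of the argument — and the main obstacle — is the verification that $J^{st}_{gl}$-cell complexes are global equivalences, i.e.\ stability of global equivalences under pushouts along flat cofibrations and transfinite compositions thereof, together with the right properness assertion; both are obtained by invoking the corresponding $G$-equivariant results of \cite{Hau14} separately for every finite $G$ and reassembling using the definition of global equivalence.
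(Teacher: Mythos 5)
Your proposal shares the paper's essential ingredients — the functorial global $\Omega$-spectrum replacement $Q$ via the small object argument on $J^{st}_{gl}$, Lemma \ref{lem:leveqomega} to show global equivalences between global $\Omega$-spectra are level equivalences, the characterization ``$f$ is a global equivalence iff $Qf$ is a global level equivalence,'' and the various reductions to the $G$-equivariant results of \cite{Hau14}. However, the paper then invokes Bousfield's Theorem 9.3 of \cite{Bou01} as a black box (verifying its axioms (A1)--(A3)), whereas you sketch a direct by-hand verification of the model category axioms, and there is a real gap in that verification.

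The gap is in the retract argument identifying global fibrations with $J^{st}_{gl}$-injectives. You factor an acyclic cofibration $i$ as a $J^{st}_{gl}$-cell complex $j$ followed by a $J^{st}_{gl}$-injective $p$, observe by 2-out-of-3 that $p$ is a global equivalence, and then say ``use a lift to exhibit $i$ as a retract of $j$.'' But the lift you need lives in a square whose left edge is $i$ (an arbitrary flat cofibration which is a global equivalence) and whose right edge is $p$; for it to exist you must already know that $p$ has the right lifting property against arbitrary flat cofibrations, i.e.\ that $p$ is a \emph{level} trivial fibration. A priori a $J^{st}_{gl}$-injective only has RLP against $J^{st}_{gl}$-cells and their retracts. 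The missing statement — a $J^{st}_{gl}$-injective global equivalence is a global level equivalence — is precisely the nontrivial content that Bousfield's axiom (A3) encodes; the paper discharges it by checking that pullbacks of global equivalences along global level fibrations remain global equivalences (the dual of \cite[Prop. 4.4]{Hau14} applied per-group). You do mention this right-properness-type assertion at the end as part of ``the technical heart,'' but you never feed it back into the retract argument where it is actually required. The same gap also underlies your claim that the (cofibration, trivial fibration) factorization is simply inherited from the level structure: identifying the trivial fibrations of the localized structure with the level trivial fibrations is equivalent to the missing statement. Once this lemma is supplied (for instance by the pullback argument, or by just citing \cite[Thm.\ 9.3]{Bou01} as the paper does), the rest of your outline — fibrant object characterization via Proposition \ref{prop:omegalifting}, properness and monoidality by reduction to \cite{Hau14}, and the positive variant — goes through and matches the paper.
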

Here, a symmetric spectrum is called a \emph{positive global $\Omega$-spectrum} if it satisfies the condition of Definition \ref{def:globalomega} in all cases except possibly for $G=\{e\}$ and $M=\emptyset$.
\begin{proof} Both model structures are obtained via left Bousfield localization at the respective global level model structure. We apply Theorem \cite[Thm. 9.3]{Bou01}, with respect to the global $\Omega$-spectrum replacement functor $Q$ and the natural global equivalence $q:\id\to Q$ just constructed. By Lemma \ref{lem:leveqomega}, a morphism between global $\Omega$-spectra is a global equivalence if and only if it is a (positive) global level equivalence, so the global equivalences agree with the $Q$-equivalences in the sense of Bousfield's theorem.

It remains to check axioms (A1)-(A3) of \cite[Sec. 9.2]{Bou01}. Axiom (A1) requires that every (positive) global level equivalence is a global equivalence, which is Example \ref{exa:levelisstable}. For a symmetric spectrum $X$, the morphisms $q_{QX},Qq_X:QX\to QQX$ are global equivalences between global $\Omega$-spectra, hence global level equivalences by Lemma \ref{lem:leveqomega}, implying axiom (A2). For (A3) we are given a pullback square
\[
	\xymatrix{V\ar[r]^k \ar[d]_g & X \ar[d]^f\\
						W\ar[r]_h & Y}
\]
where $f$ is a (positive) global level fibration, $h$ is a global equivalence and $X$ and $Y$ are (positive) global $\Omega$-spectra. We have to show that $g$ is also a global equivalence. This is even true without any hypothesis on $X$ and $Y$, as follows by applying the dual version of \cite[Prop. 4.4]{Hau14} for every finite group $G$.

Monoidality of the model structures is again implied by the respective monoidality of the $G$-flat model structures (\cite[Prop. 4.13]{Hau14}). Finally, the statement about the fibrant objects is a consequence of the characterization of the fibrations in the localized model structure given in \cite[Thm. 9.3]{Bou01} and the fact that $X$ is a (positive) global $\Omega$-spectrum if and only if the map $q_X:X\to QX$ is a (positive) global level equivalence.
\end{proof}
The generating cofibrations are the same as for the respective level model structures. In the non-positive case, the generating acyclic cofibrations are given by $J_{gl}^{st}$, for the positive version one has to take out those maps that are not positive flat cofibrations (i.e., those involving a spectrum of the form $F^{\{e\}}_{\emptyset}(-)$). Finally, we note:

\begin{Prop} Geometric realization and singular complex form a Quillen equivalence between symmetric spectra of spaces and simplicial sets with the (positive) global model structures. 
\end{Prop}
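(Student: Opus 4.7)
The plan is to reduce the statement to the Quillen equivalence $|\cdot| \dashv \mathcal{S}$ between based $G$-simplicial sets and based $G$-spaces for finite $G$, first at the level of the (positive) global \emph{level} model structures, and then to descend to the stable versions by comparing Bousfield localizations.

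At the level model structures, the lemma following Definition \ref{def:levmod} shows that (positive) global level equivalences and fibrations are detected by the evaluation functors $(-)(M)^G$ for all finite groups $G$ and all (non-empty, in the positive case) finite $G$-sets $M$. Both $|\cdot|$ and $\mathcal{S}$ commute levelwise with the formation $X(M)=X_m\wedge_{\Sigma_m} \Bij(\underline{m},M)_+$, since the non-spectrum smash factors are discrete, and they commute with fixed points for finite discrete group actions. Combined with the classical Quillen equivalence of based $G$-simplicial sets and based $G$-spaces for each finite group $G$, this implies that $|\cdot|\dashv \mathcal{S}$ is a Quillen equivalence between the (positive) global level model structures on $Sp^\Sigma_\mathcal{S}$ and $Sp^\Sigma_\mathcal{T}$.

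To descend to the stable versions, I would apply the general criterion that a Quillen equivalence of underlying model categories induces a Quillen equivalence between left Bousfield localizations whose sets of generating acyclic cofibrations correspond under the left adjoint. Since $|\cdot|$ commutes with global free spectra (by adjunction and the levelwise commutation one has $|F_M^G(A)|\cong F_M^G(|A|)$), with permutation spheres ($|S^N|\cong S^N$), with smash products and with mapping cylinders, one obtains $|\overline{\lambda}_M^N|\cong \overline{\lambda}_M^N$ and more generally $|i\square \overline{\lambda}_M^N|\cong |i|\square \overline{\lambda}_M^N$. Hence the image of $J^{st}_{gl}$ under $|\cdot|$ is contained in the topological $J^{st}_{gl}$, identifying the localizing classes.

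It remains to check that the derived unit $X\to \mathcal{S}(|X|^f)$ is a global equivalence for every cofibrant $X\in Sp^\Sigma_\mathcal{S}$, where $|X|^f$ denotes a stable fibrant replacement. The key observation is that $\mathcal{S}$ sends level fibrant global $\Omega$-spectra of spaces to global $\Omega$-spectra of simplicial sets, which is essentially built into the definition: in the simplicial case $\Omega^N$ is interpreted as $\Omega^N(\mathcal{S}|-|)$, so the adjoint generalized structure map of $\mathcal{S}(Y)$ is obtained by applying $\mathcal{S}$ to that of $Y$ and is therefore a $G$-weak equivalence whenever the latter is. Consequently the derived unit factors as $X\to \mathcal{S}(|X|)\to \mathcal{S}(|X|^f)$: the first arrow is a global level equivalence by the level Quillen equivalence, and the second is a global level equivalence between global $\Omega$-spectra, hence a global equivalence by Lemma \ref{lem:leveqomega}. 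This verifies the Quillen equivalence condition.

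The main obstacle is bookkeeping: ensuring that the commutation isomorphisms above respect the $(G\times\Sigma_n)$-actions on $\mathbf{\Sigma}(M,N)$ and on $F_M^G$, and that fixed points behave correctly under geometric realization of the simplicial permutation spheres. None of this is conceptually deep -- for finite discrete groups both functors commute with fixed points and with smash products against discrete based objects -- but it requires some patience with the indexing conventions of Section \ref{sec:free}.
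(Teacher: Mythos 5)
Your outline — establish the level Quillen equivalence, observe that both functors match up generating acyclic cofibrations and local objects, then descend to the Bousfield localization — is sound and close to the paper's argument, which simply records the level Quillen equivalence plus the observation that a simplicial symmetric spectrum is a global $\Omega$-spectrum if and only if its geometric realization is, and concludes that $|\cdot|$ and $\mathcal{S}$ preserve and reflect global equivalences. However, the final step of your argument, checking the derived unit, contains a genuine error. You write that in the factorization $X\to \mathcal{S}(|X|)\to \mathcal{S}(|X|^f)$ the second arrow is ``a global level equivalence between global $\Omega$-spectra, hence a global equivalence by Lemma \ref{lem:leveqomega}.'' But $\mathcal{S}(|X|)$ is not a global $\Omega$-spectrum in general (only its target $\mathcal{S}(|X|^f)$ is), so the hypothesis of that lemma is not met; and the lemma in any case runs the other way (it promotes global equivalences between global $\Omega$-spectra to level equivalences, not vice versa). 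What you actually need is that $\mathcal{S}$ carries the stable fibrant replacement $|X|\to |X|^f$, which is a global equivalence but typically not a level equivalence, to a global equivalence of simplicial spectra — and your argument does not establish that.

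The cheapest repair is to verify the derived counit instead of the derived unit. Since every symmetric spectrum of simplicial sets is (flat) cofibrant and every symmetric spectrum of spaces is level fibrant, the derived counit at a stably fibrant $Y$ is just the ordinary counit $|\mathcal{S}Y|\to Y$. This is a level equivalence by the classical fact that $|\mathcal{S}(-)|\to \id$ is a weak equivalence of spaces, applied at each evaluation $(-)(M)^G$, and hence is in particular a global equivalence; this immediately gives the Quillen equivalence. Alternatively one can argue as in the paper: the level Quillen equivalence is ``strong'' (unit and counit are always level equivalences), so $|\cdot|$ and $\mathcal{S}$ induce an equivalence of the level homotopy categories, and since they also preserve and reflect the property of being a global $\Omega$-spectrum, they preserve and reflect the class of maps inverted by the localization at these local objects — i.e.\ the global equivalences — on the nose, with no need to track derived units separately.
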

\begin{proof} It follows directly from the unstable Quillen equivalence of spaces and simplicial sets that the adjunction is a Quillen equivalence for the global level model structures. Furthermore, a symmetric spectrum of simplicial sets $X$ is a global $\Omega$-spectrum if and only if its geometric realization is. This implies that both geometric realization and singular complex preserve and reflect global equivalences and that they form a Quillen equivalence for the global stable model structures.
\end{proof}

\section{Multiplicative properties}
\label{sec:monoidal}
We have seen in Theorem \ref{theo:stablemod} that the global model structure is monoidal, i.e., that it satisfies the pushout product and unit axiom. In this section we construct global model structures on categories of modules, algebras and commutative algebras by further checking that the monoid and strong commutative monoid axioms hold. In all cases, the properties follow directly from the respective ones for $G$-symmetric spectra.

\subsection{Model structure on module and algebra categories}
Given a model structure on symmetric spectra, a map of modules or algebras is called a weak equivalence or fibration if its underlying morphism of symmetric spectra is. We say that the given model structure lifts to the category of modules or algebras if these two classes define a model structure.
\begin{Theorem} \label{theo:modules} For every symmetric ring spectrum $R$ the positive and non-positive global stable model structures lift to the category of $R$-modules. If $R$ is commutative, these model structures are again monoidal. 
\end{Theorem}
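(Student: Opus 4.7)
The plan is to use the standard Schwede--Shipley lifting machinery: for a cofibrantly generated monoidal model category in which the domains of the generating cofibrations are small and the monoid axiom holds, the model structure lifts to a cofibrantly generated model structure on $R$-modules for any monoid $R$, and when $R$ is commutative the lifted structure is again monoidal with respect to $\wedge_R$. By Theorem \ref{theo:stablemod} the (positive) global stable model structure is cofibrantly generated and monoidal, and the standard smallness argument (as in \cite[Sec. 5.3]{HSS00}) applies. So the whole theorem reduces to verifying the monoid axiom: every transfinite composition of cobase changes of maps $j \wedge Z$, with $j$ a (positive) generating acyclic cofibration of symmetric spectra and $Z$ an arbitrary symmetric spectrum, should be a global equivalence.

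First I would reduce the monoid axiom to its $G$-equivariant counterpart. By Definition \ref{def:globalequivalence} and Remark \ref{rem:gleveq}, a morphism of symmetric spectra is a global equivalence if and only if its image under $\triv_G \colon Sp^{\Sigma} \to GSp^{\Sigma}$ is a $G$-stable equivalence for every finite group $G$. The functor $\triv_G$ is strong symmetric monoidal, preserves all colimits, and sends (positive) flat cofibrations and global equivalences to (positive) $G$-flat cofibrations and $G$-stable equivalences respectively. Consequently $\triv_G$ takes a transfinite composition of cobase changes of maps $j \wedge Z$ to the analogous transfinite composition of cobase changes of $\triv_G(j) \wedge \triv_G(Z)$. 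The $G$-equivariant monoid axiom established in \cite{Hau14} (the same axiom invoked there to lift the $G$-flat stable model structure to categories of equivariant modules) then identifies this composition as a $G$-stable equivalence. Quantifying over all finite $G$ yields the monoid axiom at the global level. The Schwede--Shipley argument now produces the model structure on $R$-modules, and monoidality for commutative $R$ is a formal consequence of the same general machinery.

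The main obstacle is conceptual rather than technical. Global equivalences are not controlled by any single level or homotopy-group invariant, so the monoid axiom is not amenable to a direct computation with homotopy classes. The key observation that makes the reduction work is that all the set-theoretic constructions entering the axiom---smash product, pushout, transfinite composition---are strictly preserved by each trivial-action functor $\triv_G$, so the axiom can be checked one finite group at a time and then assembled by invoking the corresponding results of \cite{Hau14}.
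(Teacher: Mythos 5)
Your proof is correct, and it matches what the paper itself offers as the alternative route in the proof of the monoid axiom (Corollary 3.5): reduce to the $G$-equivariant monoid axiom of \cite[Prop. 7.3]{Hau14} for each finite group $G$ separately, using that $\triv_G$ is strong monoidal and preserves colimits. The paper's primary route is a bit more hands-on: it first proves the flatness statement (Proposition \ref{prop:flatness}, again by reduction to \cite{Hau14}), then observes that each $j\wedge Y$ with $j\in J_{gl}^{st}$ is an $h$-cofibration and a global equivalence, and closes off using the fact that $h$-cofibrations which are global equivalences are stable under cobase change and transfinite composition (again one group at a time). Your route saves a step by invoking the $G$-equivariant monoid axiom wholesale; the paper's primary route exposes the flatness statement as a reusable tool (which it also needs elsewhere). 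One small point worth spelling out: $\triv_G$ applied to a map in $J_{gl}^{st}$ is a $G$-flat acyclic cofibration but is typically not itself in the generating set for the $G$-flat model structure, so you are implicitly using that the $G$-equivariant monoid axiom applies to all flat acyclic cofibrations (not just the chosen generators). This is standard and is indeed how \cite{Hau14} establishes it, but it is the kind of thing that should be flagged rather than passed over silently.
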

\begin{Theorem} \label{theo:algebras} For every commutative symmetric ring spectrum $R$ the positive and non-positive global stable model structures lift to the category of $R$-algebras. Moreover, every cofibration of $R$-algebras whose source is cofibrant as an $R$-module is a cofibration of $R$-modules. 
\end{Theorem}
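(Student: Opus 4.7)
The strategy is the standard Schwede--Shipley lifting: transfer the (positive) global stable model structure along the free-$R$-algebra adjunction, after verifying the monoid axiom, and then deduce the module-cofibrancy statement from the usual pushout filtration in algebra categories. The key technical principle is that every property relevant to the argument -- flat cofibration, global equivalence, smash product, pushout, transfinite colimit -- is preserved and reflected by the trivial-action functors $\triv_G:Sp^{\Sigma}\to GSp^{\Sigma}$ as $G$ ranges over finite groups, so each verification reduces to the corresponding $G$-equivariant statement already contained in \cite{Hau14}.

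First I would establish the global monoid axiom: every transfinite composition of cobase changes of maps of the form $j\wedge X$, with $j$ a (positive) flat acyclic cofibration and $X\in Sp^{\Sigma}$ arbitrary, is a global equivalence. By Definition \ref{def:globalequivalence} this amounts to showing that $\triv_G$ of such a map is a $G$-stable equivalence for every finite $G$. Since $\triv_G$ commutes with smash products, pushouts and sequential colimits and, by Remark \ref{rem:gleveq}, sends $j$ to a $G$-flat acyclic cofibration of $GSp^{\Sigma}$, this is immediate from the $G$-equivariant monoid axiom, which is part of the monoidality package \cite[Prop. 4.13]{Hau14}. Together with the pushout-product and unit axioms already granted by Theorem \ref{theo:stablemod}, the standard Schwede--Shipley lifting theorem then produces the desired cofibrantly generated model structures on $R$-algebras in both the positive and non-positive variants, with generating (acyclic) cofibrations obtained by applying the free $R$-algebra functor to those of $Sp^{\Sigma}$.

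For the second assertion, I would use the classical pushout filtration for $R$-algebra extensions: along a free map $T_R A\to T_R B$ induced by a flat cofibration $f:A\to B$ of $R$-modules, the pushout under an $R$-algebra $C$ is built from $C$ by a countable sequence of pushouts in $R$-modules whose attaching maps are, up to $\Sigma_n$-equivariance, iterated pushout-product powers $f^{\square n}$ smashed with tensor powers of the underlying $R$-module of $C$. Assuming $C$ is cofibrant as an $R$-module, each filtration stage is a cofibration of $R$-modules because $f^{\square n}$ is a $\Sigma_n$-flat cofibration by the $G$-equivariant pushout-product axiom with $G=\Sigma_n$; passing to transfinite compositions and retracts handles general $R$-algebra cofibrations. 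This is once again formally the same argument as in \cite{Hau14}, transferred to the global setting by the same $\triv_G$-reduction.

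The only real obstacle is the bookkeeping in the pushout filtration, and in particular the verification that the requisite iterated pushout-product powers remain flat \emph{$\Sigma_n$-equivariantly} -- which is exactly why one must use the flat (rather than projective) cofibrations throughout. Both of these issues are already resolved at the $G$-equivariant level in \cite{Hau14}, so the $\triv_G$-reduction principle in use throughout makes the global version a direct transfer with essentially no new content.
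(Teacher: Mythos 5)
Your proposal is correct and matches the paper's strategy: both invoke the Schwede--Shipley lifting theorem \cite[Thm.\ 4.1]{SS00}, and both reduce the monoid axiom to the $G$-equivariant case in \cite{Hau14} (the paper's primary argument phrases this via $h$-cofibrations and the flatness proposition, but explicitly records your reduction to \cite[Prop.\ 7.3]{Hau14} as an equivalent alternative). One small imprecision: your filtration for the second assertion, with attaching maps given by $f^{\square n}$ up to $\Sigma_n$-equivariance, is really the shape of the filtration for \emph{commutative} algebras; for associative $R$-algebras the Schwede--Shipley filtration has a different, word-indexed form. This is harmless here since that clause is already part of the statement of \cite[Thm.\ 4.1]{SS00} and needs no separate proof once the monoid axiom is in place.
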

Both theorems are obtained via the results of \cite{SS00}, which show that it suffices to prove that the monoid axiom (stated below) holds. The main ingredient is the following:
\begin{Prop}[Flatness] \label{prop:flatness} \begin{enumerate}[(i)] \item Smashing with a flat symmetric spectrum preserves global equivalences.
    \item Smashing with an arbitrary symmetric spectrum preserves global equivalences between flat symmetric spectra. 
\end{enumerate}
\end{Prop}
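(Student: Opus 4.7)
The plan is to reduce both claims to their $G$-equivariant analogues already established in \cite{Hau14}, by applying the trivial-action functor $\triv_G: Sp^\Sigma \to GSp^\Sigma$ one finite group $G$ at a time. The reduction relies on three ingredients: (a) $\triv_G$ is strong symmetric monoidal, so $\triv_G(X \wedge Y) = \triv_G(X) \wedge \triv_G(Y)$; (b) by Remark \ref{rem:gleveq}, any flat symmetric spectrum is $G$-flat (hence cofibrant in the $G$-flat stable model structure) when equipped with the trivial $G$-action; and (c) by Definition \ref{def:globalequivalence}, $f$ is a global equivalence iff $\triv_G(f)$ is a $G$-stable equivalence for every finite $G$.

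For (i), let $f: X \to Y$ be a global equivalence and $F$ a flat symmetric spectrum. For each finite $G$, I would observe that $\triv_G(f)$ is a $G$-stable equivalence and $\triv_G(F)$ is $G$-flat. Monoidality of the $G$-flat stable model structure \cite[Prop. 4.13]{Hau14}, combined with the standard fact that smashing with a cofibrant object preserves weak equivalences in a monoidal model category, then implies $\triv_G(f) \wedge \triv_G(F) = \triv_G(f \wedge F)$ is a $G$-stable equivalence. Since this holds for every $G$, $f \wedge F$ is a global equivalence.

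For (ii), given a global equivalence $f: X \to Y$ between flat spectra and an arbitrary $Z$, the plan is to reduce to (i) via a flat replacement of $Z$. Using Proposition \ref{prop:levmod}, pick a cofibrant replacement $\pi: Z^c \to Z$ in the global level model structure; then $Z^c$ is flat and $\pi$ is a global level equivalence, hence a global equivalence by Example \ref{exa:levelisstable}. Applying (i) twice then yields global equivalences $f \wedge Z^c$ (smashing the global equivalence $f$ with the flat $Z^c$) and $X \wedge \pi$, $Y \wedge \pi$ (smashing the flat spectra $X, Y$ with the global equivalence $\pi$). In the commutative square
\[ \xymatrix{X \wedge Z^c \ar[r]^{f \wedge Z^c} \ar[d]_{X \wedge \pi} & Y \wedge Z^c \ar[d]^{Y \wedge \pi} \\ X \wedge Z \ar[r]_{f \wedge Z} & Y \wedge Z,} \]
three of the four edges are global equivalences, and the two-out-of-three property applied to $(f \wedge Z) \circ (X \wedge \pi) = (Y \wedge \pi) \circ (f \wedge Z^c)$ forces $f \wedge Z$ to be a global equivalence as well.

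The main point to verify carefully is step (i), i.e., that the cited $G$-monoidality yields the stronger form \emph{smashing with any $G$-flat spectrum preserves $G$-stable equivalences}, rather than merely the pushout product axiom. This is the standard consequence of Ken Brown's lemma plus the fact that $G$-flat spectra are exactly the cofibrant objects of the $G$-flat stable structure, and was already implicitly invoked in the proof of Theorem \ref{theo:stablemod}; once this is in hand, (ii) follows formally by the square argument above with no additional model-theoretic input beyond the existence of flat level replacements.
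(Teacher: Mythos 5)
Your reduction to the $G$-equivariant setting via $\triv_G$ (strong monoidal, flat $\Rightarrow$ $G$-flat, global equivalence $\Leftrightarrow$ $G$-stable equivalence for all $G$) is exactly the right framework, and it matches the paper's strategy of citing \cite[Prop. 7.1]{Hau14} for each finite group. Your derivation of (ii) from (i) by a flat replacement $Z^c \to Z$ and a two-out-of-three square is correct and is a perfectly reasonable route, although the cited reference already contains both halves.

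However, there is a genuine gap in your justification of step (i). The claim that ``smashing with a cofibrant object preserves weak equivalences'' is \emph{not} a formal consequence of the pushout product axiom together with Ken Brown's lemma. What the pushout product axiom gives, via Ken Brown, is only that $F \wedge -$ preserves weak equivalences \emph{between cofibrant objects} when $F$ is cofibrant. To conclude that $F \wedge -$ preserves arbitrary $G$-stable equivalences (i.e.\ that $G$-flat cofibrant objects are ``flat'' in the strong sense) is a separate, non-formal theorem; for non-equivariant symmetric spectra this is due to Shipley, and the $G$-equivariant version is precisely \cite[Prop. 7.1]{Hau14}, which is what the paper invokes. So you are implicitly assuming the statement you are trying to prove. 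The remark that this ``was already implicitly invoked in the proof of Theorem \ref{theo:stablemod}'' is also not accurate: that proof uses the pushout product axiom and properties of $J_{gl}^{st}$-cell complexes, not the flatness of smashing. The fix is simply to replace the appeal to a ``standard fact about monoidal model categories'' with a direct citation of \cite[Prop. 7.1]{Hau14}; with that in place both parts go through by your reduction.
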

\begin{proof} This is a direct consequence of \cite[Prop. 7.1]{Hau14}.
\end{proof}
For any symmetric spectrum $Y$ we denote by $\{J_{gl}^{st}\wedge Y\}_{cell}$ the class of morphisms obtained via (transfinite) compositions and pushouts from morphisms of the form $j\wedge Y$, where $j$ lies in $J_{gl}^{st}$.
\begin{Cor}[Monoid axiom] Every morphism in $\{J_{gl}^{st}\wedge Y\}_{cell}$ is a global equivalence.
\end{Cor}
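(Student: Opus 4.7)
The strategy is to show that $j \wedge Y$ is an acyclic cofibration in the global stable model structure of Theorem \ref{theo:stablemod} for every $j \in J_{gl}^{st}$ and every symmetric spectrum $Y$, and then to invoke the standard closure of acyclic cofibrations under pushouts and transfinite compositions.

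The first step is to verify that $j \wedge Y$ is a global equivalence. Every generator $j \in J_{gl}^{st}$ has flat source and flat target: the map $i \square \overline{\lambda}_M^N$ is assembled from the free spectra $F^G_{M \sqcup N}(S^N)$ and $F^G_M(S^0)$ (which are flat), from the mapping cylinder construction (which preserves flatness), and from generating cofibrations of based (simplicial) sets. Moreover, $j$ is itself a flat cofibration and a global equivalence, as already recorded in the paragraph preceding Lemma \ref{lem:leveqomega}. Hence Proposition \ref{prop:flatness}(ii) applies and gives that $j \wedge Y$ is a global equivalence for arbitrary $Y$.

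Second, one checks that $j \wedge Y$ remains a flat cofibration. This is the delicate point, since the ordinary pushout--product axiom only guarantees this when $Y$ is itself cofibrant. The needed strengthening — that smashing a flat cofibration between flat spectra with an arbitrary symmetric spectrum still yields a flat cofibration — is a levelwise statement about $\Sigma_n$-equivariant latching maps and is implicit in the flatness machinery of \cite[Sec. 7]{Hau14}, applied for each finite group and then assembled globally. Granted this, $j \wedge Y$ is an acyclic cofibration in the global stable model structure.

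Finally, acyclic cofibrations in any model category are closed under pushout and transfinite composition, as a purely formal consequence of their characterization by the left lifting property against (global) fibrations. Therefore every morphism in $\{J_{gl}^{st} \wedge Y\}_{cell}$ is an acyclic cofibration, and in particular a global equivalence. The principal obstacle I anticipate is the second step above: establishing the flat-cofibration property of $j \wedge Y$ for non-flat $Y$. Everything else — the global equivalence statement and the closure argument — is either immediate from Proposition \ref{prop:flatness} or entirely formal.
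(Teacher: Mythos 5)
Your first step is fine and matches the paper's: Proposition~\ref{prop:flatness}(ii) gives that $j\wedge Y$ is a global equivalence for every $j\in J_{gl}^{st}$ and arbitrary $Y$, since the source and target of every generator $j$ are flat. The formal closure argument at the end is also fine. The gap is your second step, and it is fatal as stated: it is \emph{not} true that a flat cofibration between flat symmetric spectra smashed with an arbitrary symmetric spectrum is again a flat cofibration. The map $\ast\to\mathbb{S}$ is a flat cofibration between flat spectra, yet $\ast\to\mathbb{S}\wedge Y\cong Y$ is a flat cofibration only when $Y$ is flat. The same failure is exactly why the monoid axiom is a nontrivial additional input beyond the pushout--product axiom, so there is nothing ``implicit in the flatness machinery'' that would rescue this; the intended assertion is simply false.

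The paper circumvents this by downgrading from flat cofibrations to the weaker notion of $h$-cofibration (levelwise closed inclusion / mapping-cylinder retract, cf.\ \cite[Def.~4.3]{Hau14}). Smashing an $h$-cofibration with an arbitrary $Y$ remains an $h$-cofibration, and the class of $h$-cofibrations that are simultaneously global equivalences is closed under pushouts and (transfinite) compositions, by \cite[Props.~4.4 and~4.5]{Hau14} applied for each finite group. This is strictly weaker than what you tried to show, but it is all that is needed: the conclusion of the monoid axiom asks only that the maps in $\{J_{gl}^{st}\wedge Y\}_{cell}$ be global equivalences, not that they be acyclic cofibrations in the model structure. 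You should replace your second step by the observation that $j\wedge Y$ is an $h$-cofibration and then run the closure argument for the class of $h$-cofibrations that are global equivalences, rather than for acyclic cofibrations.
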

\begin{proof} It suffices to consider the topological case. Every morphism of the form $j\wedge Y$ with $j\in J_{gl}^{st}$ is an $h$-cofibration (cf. \cite[Def. 4.3]{Hau14}) and a global equivalence by Proposition \ref{prop:flatness}. The class of $h$-cofibrations which are also global equivalences is closed under pushouts and composition (\cite[Prop. 4.4 and 4.5]{Hau14} for every group separately), hence every morphism in $\{J_{gl}^{st}\wedge Y\}_{cell}$ is a global equivalence. Alternatively, the corollary follows directly from the monoid axiom for the $G$-flat stable model structure on $G$-symmetric spectra (\cite[Prop. 7.3]{Hau14}). 
\end{proof}

By \cite[Thm. 4.1]{SS00}, this implies Theorems \ref{theo:modules} and \ref{theo:algebras}.

\subsection{Model structure on commutative algebra categories}
The positive global model structure also lifts to the category of commutative symmetric ring spectra (or more generally, commutative algebras over a commutative symmetric ring spectrum). We note that this is a very strong form of equivariant commutativity, which induces norm maps and power operations on equivariant homotopy groups. For this reason commutative symmetric (or orthogonal) ring spectra are called ``ultracommutative'' in \cite{Sch15} when they are considered from the point of view of global homotopy.
\begin{Theorem} \label{theo:commalgebras} For every commutative symmetric ring spectrum $R$ the positive global model structure lifts to the category of commutative $R$-algebras.

Moreover, the underlying $R$-module map of a positive flat cofibration of commutative $R$-algebras $X\to Y$ is a positive flat cofibration of $R$-modules if $X$ is (not necessarily positive) flat as an $R$-module. In particular, the symmetric spectrum underlying a positive flat commutative symmetric ring spectrum is positive flat.
\end{Theorem}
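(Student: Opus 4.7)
The plan is to apply the standard criterion for lifting a monoidal model structure to its category of commutative monoids (e.g., the strong commutative monoid axiom in the sense of White). Together with Theorem \ref{theo:modules}, this reduces both assertions of the theorem to verifying the following: for every positive flat cofibration $i\colon X\to Y$ of symmetric spectra and every $n\ge 1$, the induced map
$$Q^n(i)/\Sigma_n \longrightarrow Y^{\wedge n}/\Sigma_n$$
on $n$-fold symmetric pushout powers is a positive flat cofibration, and moreover a global equivalence whenever $i$ is. The statement about the underlying $R$-module of a positive flat cofibration of commutative $R$-algebras then follows from the general machinery, since the free commutative $R$-algebra on an $R$-module can be analyzed via precisely these symmetric power filtrations.

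First I would reduce to the equivariant analogue group by group. For a finite group $G$, the trivial-action functor ${\triv}_G\colon Sp^{\Sigma}\to GSp^{\Sigma}$ sends positive flat cofibrations to positive $G$-flat cofibrations and, by Definition \ref{def:globalequivalence}, sends global equivalences to $G$-stable equivalences. It is also strong symmetric monoidal and commutes with the formation of $\Sigma_n$-orbits, so ${\triv}_G(Q^n(i)/\Sigma_n\to Y^{\wedge n}/\Sigma_n)$ is the symmetric pushout power of ${\triv}_G(i)$ in $GSp^{\Sigma}$. Since positive flat cofibrations of symmetric spectra are detected by the ${\triv}_G$ images being positive $G$-flat cofibrations for all finite $G$ (and likewise for global equivalences by definition), it suffices to establish the strong commutative monoid axiom for the positive $G$-flat model structure on $GSp^{\Sigma}$ for every finite group $G$.

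Second, this equivariant statement is precisely of the same nature as the flatness results in \cite[Prop. 7.1]{Hau14} on which Proposition \ref{prop:flatness} rests, and I would invoke the corresponding $G$-equivariant strong commutative monoid axiom from \emph{op.~cit.} The concrete computation filters $Y^{\wedge n}/\Sigma_n$ by subobjects with subquotients of the form $(Y/X)^{\wedge k}\wedge_{\Sigma_k\times \Sigma_{n-k}} X^{\wedge(n-k)}$, and one checks levelwise that these are positively $G$-flat and acyclic if $i$ is. Positivity is indispensable here: restricting to levels $\ge n$ in the $n$-fold smash power makes the $\Sigma_n$-action free on each level of $Y^{\wedge n}$, so passing to $\Sigma_n$-orbits is a homotopy operation and preserves the relevant equivariant weak equivalences.

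The main obstacle will be the acyclicity half of this statement, i.e.\ that the symmetric pushout power of a global equivalence between positive flat spectra is a global equivalence. This is the heart of the strong commutative monoid axiom, and classically the reason positivity cannot be omitted: without it one encounters the obstruction that $E\Sigma_n\to *$ fails to be a non-equivariant equivalence even though both sides are non-equivariantly contractible after forgetting the $\Sigma_n$-action. In the positive case, the free $\Sigma_n$-action on every level of $Y^{\wedge n}$, combined with the $h$-cofibration behavior of flat cofibrations established in \cite[Prop.~4.4 and 4.5]{Hau14}, allows one to interchange the $\Sigma_n$-orbit with the equivariant weak equivalence, completing the verification.
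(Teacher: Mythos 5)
Your proposal is correct and follows essentially the same route as the paper: both verify the strong commutative monoid axiom by reducing group-by-group to the $G$-equivariant statement in \cite{Hau14} (the paper cites \cite[Prop.~7.20]{Hau14}) and then apply \cite[Thm.~3.2]{Whi14} to lift the model structure. For the second assertion about underlying $R$-modules, the paper makes the slightly sharper observation that positive flat cofibrations of commutative algebras agree with those in Shipley's non-equivariant positive flat model structure, so that part is literally \cite[Prop.~4.1]{Shi04} with no further equivariant argument needed.
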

The part about positive flat cofibrations is merely a restating of Shipley's result \cite[Prop. 4.1]{Shi04}, since the cofibrations in the positive flat non-equivariant and the positive global model structure on commutative algebras are the same.

In order to prove Theorem \ref{theo:commalgebras} we make use of results of \cite{Whi14}. For this we recall that given a morphism $f:X\to Y$ of symmetric spectra, the $n$-fold pushout product $f^{\square n}$ is defined inductively via $f^{\square n}\defeq f\square f^{\square (n-1)}$.
\begin{Prop}[Strong commutative monoid axiom] Let $f:X\to Y$ be a morphism of symmetric spectra. Then:
\begin{enumerate}[(i)]
  \item If $f$ is a (positive) flat cofibration, then $f^{\square n}/\Sigma_n$ is again a (positive) flat cofibration.
  \item If $f$ is a positive flat cofibration and global equivalence, then so is $f^{\square n}/\Sigma_n$.
\end{enumerate}
\end{Prop}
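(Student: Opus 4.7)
My plan is to reduce both statements to the corresponding assertions for the positive $G$-flat stable model structure on $G$-symmetric spectra, in line with the guiding principle stated at the start of this section that the monoidal properties of the global model structure all descend from their $G$-equivariant analogues in \cite{Hau14}. For part (i) the cofibrations in the (positive) global model structure coincide with Shipley's (positive) flat cofibrations from \cite{Shi04}, and the closure of these cofibrations under $f\mapsto f^{\square n}/\Sigma_n$ is exactly Shipley's result (equivalently, the $G=\{e\}$ case of a $G$-equivariant strong commutative monoid axiom), so (i) can be imported directly.

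For part (ii) the key point is that the trivial-action functor $\triv_G\colon Sp^{\Sigma}\to GSp^{\Sigma}$ is strong symmetric monoidal and preserves the pushouts and coequalisers used to build $(-)^{\square n}/\Sigma_n$, so that
\[
\triv_G\bigl(f^{\square n}/\Sigma_n\bigr) \;=\; (\triv_G f)^{\square n}/\Sigma_n
\]
as morphisms of $G$-symmetric spectra. By Remark \ref{rem:gleveq} and the very definition of global equivalence, if $f$ is a positive flat cofibration and a global equivalence then $\triv_G f$ is a positive $G$-flat cofibration and a $G$-stable equivalence. Granting the strong commutative monoid axiom for the positive $G$-flat stable model structure, the morphism $(\triv_G f)^{\square n}/\Sigma_n$ is then a positive $G$-flat acyclic cofibration. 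Letting $G$ range over all finite groups and combining with part (i) yields that $f^{\square n}/\Sigma_n$ is a positive flat cofibration and global equivalence.

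The main obstacle is the $G$-equivariant input, i.e.\ verifying that $f\mapsto f^{\square n}/\Sigma_n$ preserves positive (acyclic) $G$-flat cofibrations of $G$-symmetric spectra. If this is not already recorded in \cite{Hau14}, the standard strategy, following White \cite{Whi14}, is to filter $f^{\square n}/\Sigma_n$ by the partition lattice; the filtration quotients have the form $A\wedge_{\Sigma_k}(Y/X)^{\wedge k}$ for flat objects $A$, and one handles them using the flatness input Proposition \ref{prop:flatness} together with closure of $h$-cofibrations and $G$-stable equivalences under pushouts and transfinite compositions (\cite[Prop.~4.4 and 4.5]{Hau14} applied $G$-equivariantly). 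The cofibration half also relies on positivity, which forces the relevant $\Sigma_k$-actions on the smash factors $(Y/X)^{\wedge k}$ to be free away from the basepoint, so that taking orbits produces no fixed-point pathology. Once this filtration analysis is set up for each finite $G$ separately, the passage to the global statement via the trivial-action reduction above is purely formal.
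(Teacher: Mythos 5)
Your reduction to the $G$-equivariant strong commutative monoid axiom via the trivial-action functor is exactly what the paper does: the stated proof is simply a citation of the $G$-equivariant version \cite[Prop.~7.20]{Hau14}, so the fallback filtration argument you sketch is not needed. Your explicit observation that $\triv_G$ is strong symmetric monoidal and preserves the colimits appearing in $(-)^{\square n}/\Sigma_n$ correctly spells out the mechanism the paper leaves implicit.
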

\begin{proof} This follows immediately from \cite[Prop. 7.20]{Hau14}. 
\end{proof}

Applying \cite[Thm. 3.2]{Whi14} (and \cite[Prop. 4.1]{Shi04} for the part on cofibrations), we obtain Theorem \ref{theo:commalgebras}.

\section{Equivariant homotopy groups of symmetric spectra}
\label{sec:homgroups} In this section we study equivariant homotopy groups of symmetric spectra. We say that a countable $G$-set for a finite group $G$ is a \emph{complete $G$-set universe} if it allows an embedding of every finite $G$-set. Then for every symmetric spectrum $X$, every finite group $G$, every complete $G$-set universe $\U_G$ and every integer $n$ we define an abelian group $\pi_n^{G,\U_G}(X)$. Any two complete $G$-set universes are isomorphic, which will imply that $\pi_n^{G,\U_G}(X)$ only depends on the choice of $\U_G$ up to natural isomorphism. However, unlike for orthogonal spectra this isomorphism of homotopy groups is not canonical, it is affected by the choice of isomorphism of $G$-set universes. Hence, for arbitrary symmetric spectra $X$ it is misleading to simply write $\pi_n^G(X)$. This phenomenon also affects the functoriality of $\pi_n^{G,\U_G}(X)$ in group homomorphisms, which we discuss in Section \ref{sec:functoriality}.

All this is tied to the fact that equivariant homotopy groups of symmetric spectra are not homotopical, i.e., global equivalences generally do not induce isomorphisms on them. If one works with the derived versions (i.e., replacing $\pi_n^{G,\U_G}(X)$ by $\pi_n^{G,\U_G}(QX)$) these problems disappear and one obtains the same properties as for homotopy groups of orthogonal spectra. In Section \ref{sec:semistability} we discuss criteria to detect for which symmetric spectra the ``naive'' equivariant homotopy groups are already derived.

\subsection{Definition and global $\upi_*$-isomorphisms} \label{sec:defhomgroups}
Given a finite group $G$ and a complete $G$-set universe $\U_G$ we denote by $s_G(\U_G)$ the poset of finite $G$-subsets of $\U_G$, partially ordered by inclusion.
\begin{Def} Let $n\in \mathbb{Z}$ be an integer. Then the $n$-th $G$-equivariant homotopy group $\pi_n^{G,\U_G}(X)$ of a symmetric spectrum of spaces $X$ (with respect to $\U_G$) is defined as
\[ \pi_n^{G,\U_G}(X) \defeq \colim_{M\in s_G(\U)}[S^{n\sqcup M},X(M)]^G. \]
The connecting maps in the colimit system are given by the composites
\[ [S^{n\sqcup M},X(M)]^G\xr{(-)\wedge S^{N-M}} [S^{n\sqcup M\sqcup (N-M)},X(M)\wedge S^{N-M}]^G\xr{(\sigma_M^{N-M})_*}  [S^{n\sqcup N},X(N)]^G
\]
for every inclusion $M\subseteq N$. The last step implicitly uses the homeomorphism $X(M\sqcup (N-M))\cong X(N)$ induced from the canonical isomorphism $M\sqcup (N-M)\cong N$.
\end{Def}
To clarify what this exactly means for negative $n$ we choose an isometric $G$-embedding $i:\R^{\infty}\hookrightarrow (\R^{(\U_G)})^G$ and only index the colimit system over those $G$-sets $M$ in $s_G(\U)$ for which $\R^M$ contains $i(\R^{-n})$. In this case the corresponding term is given by $[S^{M-i(\R^{-n})},X(M)]^G$, the expression $M-i(\R^{-n})$ denoting the orthogonal complement of $i(\R^{-n})$ in $\R^M$. Since the space of embeddings $\R^{\infty}\hookrightarrow (\R^{(\U_G)})^G$ is contractible, the definition only depends on this choice up to \emph{canonical} isomorphism and so we leave it out of the notation. As long as $S^{n\sqcup M}$ has at least two trivial coordinates, the set $[S^{n\sqcup M},X(M)]^G$ carries a natural abelian group structure and hence so does $\pi_n^{G,\U_G}(X)$.

For a symmetric spectrum of simplicial sets we put $\pi_n^{G,\U_G}(X)\defeq \pi_n^{G,\U_G}(|X|)$.
\begin{Def} A morphism $f:X\to Y$ of symmetric spectra is called a \emph{global $\upi_*$-isomorphism} if for all finite groups $G$, all integers $n\in \mathbb{Z}$ and every complete $G$-set universe $\U_G$ the induced map $\pi_n^{G,\U_G}(f):\pi_n^{G,\U_G}(X)\to \pi_n^{G,\U_G}(Y)$ is an isomorphism.
\end{Def}
In fact it suffices to require an isomorphism for a single choice of complete $G$-set universe $\U_G$ for each finite group $G$, since any two are non-canonically isomorphic.
\begin{Remark} In the terminology of \cite[Sec. 3]{Hau14}, a morphism of symmetric spectra is a global $\upi_*$-isomorphism if and only if it is a $\upi_*^{\U_G}$-isomorphism for every finite group $G$. 
\end{Remark}

The following is immediate from the definition:
\begin{Example} Every global level equivalence is a global $\upi_*$-isomorphism. 
\end{Example}

Every global level equivalence is also a global equivalence, as we remarked in Example \ref{exa:levelisstable}. It is not obvious from the definition that this is true for arbitrary global $\upi_*$-isomorphisms, but it follows by applying Theorem \cite[Thm. 3.48]{Hau14} for each finite group $G$:
\begin{Prop} \label{prop:piisoglobal} Every global $\upi_*$-isomorphism is a global equivalence.
\end{Prop}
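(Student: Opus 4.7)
The strategy is to reduce the statement, one finite group at a time, to the corresponding unreduced result for a fixed group $G$ proved in \cite{Hau14}, using the trivial-action adjunction $\triv_G:Sp^{\Sigma}\rightleftarrows GSp^{\Sigma}:(-)^G$ that was already invoked in defining global equivalences.

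First I would unravel both hypothesis and conclusion through the trivial action functor. By Definition \ref{def:globalequivalence}, a morphism $f:X\to Y$ of symmetric spectra is a global equivalence if and only if for every finite group $G$ the map $\triv_G(f):\triv_G(X)\to\triv_G(Y)$ is a $G$-stable equivalence of $G$-symmetric spectra. On the other hand, by the remark just above the proposition, $f$ is a global $\upi_*$-isomorphism if and only if it is a $\upi_*^{\U_G}$-isomorphism for every finite group $G$. Here I would briefly note that the equivariant homotopy groups $\pi_n^{G,\U_G}(X)$ of $X$ defined in Section \ref{sec:defhomgroups} are, by inspection of the defining colimit system, nothing but the equivariant homotopy groups in the sense of \cite[Sec. 3]{Hau14} applied to the $G$-symmetric spectrum $\triv_G(X)$.

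Second, having made the reduction, I would fix a finite group $G$ and appeal to \cite[Thm.\ 3.48]{Hau14}, which asserts that every $\upi_*^{\U_G}$-isomorphism of $G$-symmetric spectra is a $G$-stable equivalence. Applied to $\triv_G(f)$, this yields that $\triv_G(f)$ is a $G$-stable equivalence. Since $G$ was arbitrary, Definition \ref{def:globalequivalence} then forces $f$ to be a global equivalence, completing the argument.

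There is no real obstacle here beyond verifying the compatibility in the first paragraph, which amounts to observing that the colimit defining $\pi_n^{G,\U_G}(X)$ is the same colimit used in \cite{Hau14} once $X$ is regarded with the trivial $G$-action (the $G$-fixed-point brackets and the connecting maps coincide term-by-term). All the genuine content --- that naive $\upi_*^{\U_G}$-isomorphisms of $G$-symmetric spectra already detect $G$-stable equivalences, which is non-trivial because equivariant homotopy groups of symmetric spectra are not homotopical in general --- is imported wholesale from the fixed-group theory of \cite{Hau14}.
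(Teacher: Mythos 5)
Your proposal is correct and follows essentially the same route as the paper, which likewise dispatches the statement by invoking \cite[Thm.\ 3.48]{Hau14} one finite group at a time; the only difference is that you spell out the trivial-action bookkeeping that the paper leaves implicit.
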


\subsection{Properties}
We now collect some properties of equivariant homotopy groups and global $\upi_*$-isomorphisms, all implied by their respective versions for $G$-symmetric spectra. For this we let $C(f)$ denote the levelwise mapping cone of a morphism $f:X\to Y$ of symmetric spectra, $i(f):Y\to C(f)$ the inclusion into the cone and $q(f):C(f)\to S^1\wedge X$ its cofiber. Dually, we let $H(f)$ stand for the levelwise homotopy fiber, $p(f):H(f)\to X$ the projection and $j(f):\Omega (Y)\to H(f)$ its fiber.
\begin{Prop} Let $G$ be a finite group and $\U_G$ a complete $G$-set universe. Then the following hold:
\begin{enumerate}
\item For every symmetric spectrum of spaces $X$ the unit $X\to \Omega (S^1\wedge X)$ and the counit $S^1\wedge (\Omega X)\to X$ are global $\upi_*$-isomorphisms. In particular, there are natural isomorphisms \[ \pi_{n+1}^{G,\U_G}(S^1\wedge X)\cong \pi_n^{G,\U_G}(X)\cong \pi_{n-1}^{G,\U_G}(\Omega X).\]
\item For every morphism $f:X\to Y$ of symmetric spectra of spaces the sequences
 \[ \hdots\to \pi_n^{G,\U_G}(X)\xr{f_*} \pi_n^{G,\U_G}(Y) \xr{i(f)_*} \pi_n^{G,\U_G}(C(f))\xr{q(f)_*} \pi_{n-1}^{G,\U_G} (X)\to\hdots \]
and
\[ \hdots \to \pi_{n+1}^{G,\U_G} (Y)\xr{j(f)_*} \pi_n^{G,\U_G} (H(f))\xr{p(f)_*} \pi_n^{G,\U_G} (X)\xr{f_*} \pi_n^{G,\U_G}(Y) \to \hdots \]
are exact. Furthermore, the natural morphism $S^1\wedge H(f)\to C(f)$ is a global $\upi_*$-isomorphism.
\item For every family $(X_i)_{i\in I}$ of symmetric spectra the canonical map \[ \bigoplus_{i\in I} (\pi_n^{G,\U_G}(X_i))\to \pi_n^{G,\U_G}(\bigvee_{i\in I} X_i) \] is an isomorphism of abelian groups. If $I$ is finite, the natural morphism $\bigvee_{i\in I} X_i\to \prod_{i\in I} X_i$ is a global $\upi_*$-isomorphism.
\item Smashing with a flat symmetric spectrum preserves global $\upi_*$-isomorphisms.
\end{enumerate}
\end{Prop}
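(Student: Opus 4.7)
The overall strategy is to reduce each of the four statements to its counterpart for $G$-symmetric spectra (proved in \cite{Hau14}) by passing through the trivial-action functor $\triv_G:Sp^{\Sigma}\to GSp^{\Sigma}$ and noting that, for a symmetric spectrum $X$ equipped with the trivial $G$-action, the group $\pi_n^{G,\U_G}(X)$ defined here coincides tautologically with the $\U_G$-equivariant homotopy group of $\triv_G(X)$ as a $G$-symmetric spectrum (both are colimits of the same sets $[S^{n\sqcup M},X(M)]^G$). Thus the proof reduces to checking, construction by construction, that the relevant operation on symmetric spectra is intertwined by $\triv_G$ with the corresponding operation on $G$-symmetric spectra, and then invoking the analogous property from \cite{Hau14} for each finite group $G$ separately.

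For (1), the functors $S^1\wedge -$ and $\Omega(-)$ are defined levelwise and commute with $\triv_G$, and the unit and counit of the adjunction are similarly natural with respect to the trivial-action functor; hence the assertion follows from the fact that the unit and counit of $(S^1\wedge -,\Omega)$ are $\upi_*^{\U_G}$-isomorphisms on $G$-symmetric spectra (cf. the corresponding statement in \cite[Sec. 3]{Hau14}). For (2), both the levelwise mapping cone $C(f)$ and the levelwise homotopy fiber $H(f)$ are built levelwise from $f$, so $\triv_G(C(f))=C(\triv_G f)$ and $\triv_G(H(f))=H(\triv_G f)$; the long exact sequences and the global $\upi_*$-isomorphism $S^1\wedge H(f)\to C(f)$ are then immediate from the analogous $G$-equivariant exact sequences. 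For (3), wedges and finite products of symmetric spectra are formed levelwise and commute with $\triv_G$, so the direct-sum decomposition and the wedge-to-product comparison follow from the additivity results for $G$-equivariant homotopy groups in \cite{Hau14}.

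For (4), if $Y$ is flat then $\triv_G(Y)$ is $G$-flat by Remark \ref{rem:gleveq}, and the smash product of symmetric spectra commutes with $\triv_G$ up to the obvious identification; applying the $G$-equivariant flatness statement (which is \cite[Prop. 7.1]{Hau14}, already invoked to prove Proposition \ref{prop:flatness}) to each finite group $G$ gives that $f\wedge Y$ is a global $\upi_*$-isomorphism whenever $f$ is.

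The only step requiring any real care is verifying that $\triv_G$ preserves the ingredients entering each construction (in particular flatness, so that smashing with $\triv_G(Y)$ is homotopically well-behaved in $GSp^{\Sigma}$), which is taken care of by Remark \ref{rem:gleveq}; apart from this bookkeeping, there is no genuine obstacle, since the substantive homotopical content has already been established in \cite{Hau14} one group at a time.
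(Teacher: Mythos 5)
Your proposal is correct and follows exactly the same route as the paper: the paper's proof is the one-line citation of Propositions 3.8, 3.9, 7.1 and Corollaries 3.9, 3.10 of \cite{Hau14}, applied to $\triv_G(X)$ for each finite group $G$ separately, which is precisely the reduction you spell out (with the additional, helpful, bookkeeping about $\triv_G$ commuting with the levelwise constructions and preserving flatness).
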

In the second item we have implicitly used the isomorphisms of item ($i$) to obtain the boundary maps.
\begin{proof} These are Propositions 3.8, 3.9 and 7.1 as well as Corollaries 3.9 and 3.10 in \cite{Hau14}.
\end{proof}
This proposition also has a simplicial analog, for which in item $(i)$ and for the second long exact sequence the constructions $\Omega$ and $H(-)$ need to be replaced by their derived versions (i.e., first applying $\mathcal{S}(|-|)$).
\subsection{Functoriality}
\label{sec:functoriality}
An important feature of global homotopy theory of orthogonal spectra is that their equivariant homotopy groups enjoy a rich functoriality in the group, they form a so-called \emph{global functor}. In short, every group homomorphism $\varphi:G\to K$ induces a restriction map $\varphi^*:\pi_*^K(X)\to \pi_*^G(X)$ (depending only on its conjugacy class) and for every subgroup $H\leq G$ there is a transfer homomorphism $\tr_H^G:\pi_*^H(X)\to \pi_*^G(X)$. Moreover, restrictions and transfers are related by a double coset formula.

While the transfer homomorphism works similarly for symmetric spectra, a complication arises when one tries to construct restriction maps. To explain this, we let $X$ be a symmetric spectrum, $\varphi:G\to K$ a homomorphism of finite groups and $x\in \pi_0^{K,\U_K}(X)$ an element represented by a $K$-map $f:S^M\to X(M)$ for a finite $K$-subset $M$ of $\U_K$. Restricting all the actions along $\varphi$ and making use of the equalities $\varphi^*(S^M)= S^{\varphi^*(M)}$ and $\varphi^*(X(M))= X(\varphi^*(M))$, we can think of $f$ as a $G$-map $S^{\varphi^*(M))}\to X(\varphi^*(M))$. In order for this to represent an element $\varphi^*(x)$ in $\pi_0^{G,\U_G}(X)$ we have to choose an embedding of $\varphi^*(M)$ into $\U_G$, but such an embedding is not canonical and -- unlike for orthogonal spectra -- the outcome is in general affected by the choice one makes.
One might try to get around this by using the restricted universe $\varphi^*(\U_K)$ instead of $\U_G$, but this only works if $\varphi$ is injective because otherwise $\varphi^*(\U_G)$ is not complete.

This issue can be resolved by carrying an embedding $\varphi^*(\U_K)\hookrightarrow \U_G$ around as an additional datum with respect to which one forms the restriction, as we now explain.

\subsection{Restriction maps}
\label{sec:restrictions}
Let $\Fin_{\U}$ denote the category of pairs $(G,\U_G)$ of a finite group $G$ together with a complete $G$-set universe $\U_G$, in which a morphism $(\varphi,\alpha)$ from $(G,\U_G)$ to $(K,\U_K)$ is a group homomorphism $\varphi:G\to K$ and a $G$-equivariant embedding $\alpha:\varphi^*(\U_K)\hookrightarrow \U_G$.

Now we let $X$ be a symmetric spectrum and $(\varphi:G\to K,\alpha:\varphi^*(\U_K)\hookrightarrow \U_G)$ a morphism in $\Fin_{\U}$. Further let $x$ be an element of $\pi_0^{K,\U_K}(X)$ represented by a $K$-map $f:S^M\to X(M)$ with $M\subseteq \U_K$. Then we define $(\varphi,\alpha)^*(x)\in \pi_0^{G,\U_G}(X)$ as the class of the composite
\[ S^{\alpha(M)}\xr{S^{(\alpha_{|M})^{-1}}} S^M \xr{f} X(M) \xr{X(\alpha_{|M})} X(\alpha(M)).
\]
This class does not depend on the chosen representative $f$ and hence we obtain a restriction map
\[(\varphi,\alpha)^*:\pi_0^{K,\U_K}(X)\to \pi_0^{G,\U_G}(X).\]
The following is straightforward:
\begin{Prop} For every symmetric spectrum $X$ the assignment
\begin{eqnarray*} (G,\U_G) & \mapsto & \pi_0^{G,\U_G}(X) \\
									(G\xr{\varphi} K,\varphi^*(\U_K)\stackrel{\alpha}{\hookrightarrow} \U_G) & \mapsto & ((\varphi,\alpha)^*:\pi_0^{K,\U_K}(X)\to \pi_0^{G,\U_G}(X))
\end{eqnarray*}
defines a contravariant functor $\upi_0(X)$ from $\Fin_{\U}$ to abelian groups.
\end{Prop}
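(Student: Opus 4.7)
The plan is to verify three things in succession: (1) that the formula $(\varphi,\alpha)^*(x)$ is independent of the choice of representative of $x$; (2) that identities go to identities; and (3) that composition in $\Fin_{\U}$ is respected. The fact that each $(\varphi,\alpha)^*$ is a group homomorphism will then fall out essentially for free, since the abelian group structure on $\pi_0$ is given by concatenation of trivial coordinates, and the formula in the definition commutes with adding trivial summands to $M$.

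First I would handle well-definedness. A class $x\in \pi_0^{K,\U_K}(X)$ is represented by a $K$-map $f:S^M\to X(M)$, and two representatives differ by stabilization along an inclusion $M\subseteq N$ of finite $K$-subsets of $\U_K$. I would compute the image under $(\varphi,\alpha)^*$ of both $f$ and its stabilization $\sigma_M^{N-M}\circ (f\wedge S^{N-M})$, and check they coincide in $\pi_0^{G,\U_G}(X)$. This reduces to the compatibility of the generalized structure maps with the $G$-equivariant set maps $\alpha_{|M}$, $\alpha_{|N-M}$ and $\alpha_{|N}$; concretely the relation $X(\alpha_{|N})\circ \sigma_M^{N-M} = \sigma_{\alpha(M)}^{\alpha(N-M)}\circ (X(\alpha_{|M})\wedge S^{\alpha_{|N-M}})$ is the naturality of $\sigma$ applied to the bijection $\alpha_{|N}:N\xrightarrow{\cong}\alpha(N)$.

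Second, for identities the formula trivially collapses to $f$ itself. Third, for composition, given $(\varphi,\alpha):(G,\U_G)\to (K,\U_K)$ and $(\psi,\beta):(K,\U_K)\to (H,\U_H)$, the composite in $\Fin_{\U}$ is $(\psi\circ\varphi,\alpha\circ\varphi^*(\beta))$, where $\varphi^*(\beta):\varphi^*(\psi^*(\U_H))\hookrightarrow \varphi^*(\U_K)$ is $\beta$ viewed as a $G$-equivariant map. For a representative $f:S^M\to X(M)$ with $M\subseteq\U_H$, a direct comparison of the two composites
\[
S^{\alpha(\beta(M))}\xr{S^{(\alpha_{|\beta(M)})^{-1}}} S^{\beta(M)}\xr{S^{(\beta_{|M})^{-1}}} S^M\xr{f} X(M)\xr{X(\beta_{|M})} X(\beta(M))\xr{X(\alpha_{|\beta(M)})} X(\alpha(\beta(M)))
\]
and the one obtained by applying $(\psi\circ\varphi,\alpha\circ\varphi^*(\beta))^*$ directly shows they agree, using that the underlying set map of $\alpha\circ \varphi^*(\beta)$ equals $\alpha\circ\beta$.

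The one step that takes real care, and the main obstacle, is bookkeeping during well-definedness: each object in the chain carries a $G$-action obtained by restriction along $\varphi$, and one must verify $G$-equivariance of every arrow produced from originally $K$-equivariant data together with the $G$-embedding $\alpha$. Once this is set up cleanly, the remaining clauses follow by substitution, and the abelian group homomorphism property drops out because the construction is compatible with smashing representatives on additional trivial coordinates used to define the sum in $\pi_0^{K,\U_K}(X)$.
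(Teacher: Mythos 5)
The paper offers no proof and simply declares the proposition "straightforward"; your outline is exactly the direct verification the author intends, and the points you flag as needing care (well-definedness under stabilization via naturality of the generalized structure maps, $G$-equivariance after restricting $K$-equivariant data along $\varphi$, and the underlying set map of $\alpha\circ\varphi^*(\beta)$ being $\alpha\circ\beta$) are the ones that actually carry the argument. Your plan is correct and matches the paper's (implicit) approach.
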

Using the suspension isomorphisms $\pi_n^{G,\U_G}(X)\cong \pi_0^{G,\U_G}(\Omega^n(X))$ for $n\geq 0$ as well as $\pi_n^{G,\U_G}(X)\cong \pi_0^{G,\U_G}(S^{-n}\wedge X)$ for $n<0$ we obtain natural $\Fin_\U^{op}$-functors $\upi_n(X)$ for all $n\in \mathbb{Z}$.

We note the following special cases of operations obtained this way:
\begin{enumerate}[(i)]
 \item Every subgroup inclusion $i_H^G:H\leq G$ gives rise to a restriction homomorphism  \[(i_H^G)^*:\pi_0^{G,\U_G}(X)\to \pi_0^{H,(i_H^G)^*(\U_G)}(X). \]
by applying the above construction to the morphism $(i_H^G,\id):(H,(i_H^G)^*(\U_G))\to (G,\U_G)$ in $\Fin_\U$.
	\item Every surjective group homomorphism $\varphi:G\twoheadrightarrow K$ gives rise to a restriction homomorphism
	\[ (\varphi,(- \circ \varphi))^*:\pi_0^{K,\N^K}(X)\to \pi_0^{G,\N^G}(X), \] 	
where $\N^G$ denotes the complete $G$-set universe of functions from $G$ to the natural numbers (and likewise for $K$) and $(-\circ \varphi)$ denotes the induced injective map by precomposing with $\varphi$.
 \item Every pair of a subgroup $i_H^G:H\leq G$ and an element $g\in G$ induces a conjugation homomorphism
    \[ c_g^*:\pi_0^{H,(i_H^G)^*(\U_G)}(X)\to \pi_0^{gHg^{-1},(i_{gHg^{-1}}^G)^*(\U_G)}(X) \]
by applying the above construction to the morphism \[ (g^{-1}(-)g,g\cdot -):(gHg^{-1},(i_{gHg^{-1}}^G)^*(\U_G))\to (H,(i_H^G)^*(\U_G)). \]
 \item Every injective $G$-equivariant self-map $\alpha:\U_G\hookrightarrow \U_G$ gives rise to an endomorphism \[ \alpha\cdot -:\pi_0^{G,\U_G}(X)\to \pi_0^{G,\U_G}(X)\]
via $(\id,\alpha)^*$. This defines an additive natural left action of the monoid $\Inj_G(\U_G,\U_G)$ on $\pi_0^{G,\U_G}(X)$.
\end{enumerate}
Any morphism in $\Fin_{\U}$ can be written as a composite of those of type (i),(ii) and (iv). The first three should be seen as genuine global equivariant operations which survive to the global homotopy category, whereas non-triviality of the $\Inj_G(\U_G,\U_G)$-action implies that the morphism $X\to QX$ is not a global $\upi_*$-isomorphism and hence the $\pi_n^{G,\U_G}(X)$ are not derived (cf. Proposition \ref{prop:semistable}). In the non-equivariant case $(\{e\},\N)$ this action was examined in \cite{Sch08}, the equivariant version $(G,\U_G)$ in \cite{Hau14}.

We also included the conjugation maps above because they allow a cleaner description of the double coset formula in Proposition \ref{prop:doublecoset}. They have the following property:
\begin{Lemma} \label{lem:conjtrivial} All inner conjugations $c_g^*$ act as the identity on $\pi_0^{G,\U_G}(X)$. 
\end{Lemma}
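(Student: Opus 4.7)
The plan is to unpack the definition of the restriction homomorphism associated to the morphism $(c_g, g\cdot -)\colon (G, \U_G) \to (G, \U_G)$ in $\Fin_\U$ and observe that on representatives it produces the same map one started with, essentially because $M$ itself is $G$-stable inside $\U_G$.

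First I would reduce to the case $n = 0$: for general $n$ one has the suspension isomorphism $\pi_n^{G,\U_G}(X) \cong \pi_0^{G,\U_G}(\Omega^n X)$ (or $\pi_0^{G,\U_G}(S^{-n}\wedge X)$) from Section \ref{sec:defhomgroups}, and the operation $c_g^*$ is natural in $X$, so triviality in degree zero implies triviality in all degrees. Next, pick a class $x \in \pi_0^{G,\U_G}(X)$ represented by a $G$-equivariant map $f\colon S^M \to X(M)$ for some finite $G$-subset $M \subseteq \U_G$. The crucial observation is that, since $M$ is closed under the $G$-action on $\U_G$, one has $g\cdot M = M$ as $G$-subsets of $\U_G$, so the embedding $\alpha = g\cdot -\colon c_g^*(\U_G) \hookrightarrow \U_G$ sends $M$ to itself and restricts on $M$ to the honest bijection $\lambda_g \colon M \to M$, $m\mapsto gm$ (which is \emph{not} $G$-equivariant with respect to the standard actions, but is the one induced by $g$ acting on $\U_G$).

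Unwinding the definition of $(\varphi,\alpha)^*$ from Section \ref{sec:restrictions}, the class $c_g^*(x)$ is then represented by the composite
\[
S^M \xrightarrow{\;S^{\lambda_{g^{-1}}}\;} S^M \xrightarrow{\;f\;} X(M) \xrightarrow{\;X(\lambda_g)\;} X(M).
\]
Under the evaluation formula $X(M) = X_m \wedge_{\Sigma_m} \Bij(\underline{m},M)_+$, the map $X(\lambda_g)$ is precisely the action of $g$ on $X(M)$, and similarly $S^{\lambda_{g^{-1}}}$ is the action of $g^{-1}$ on $S^M$. Therefore the displayed composite equals $s \mapsto g\cdot f(g^{-1}\cdot s)$, and the $G$-equivariance of $f$ collapses this to $f$ itself. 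Hence $c_g^*(x) = x$ already on the nose at the level of representatives, with no need to pass further along the colimit system.

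The only point that requires a bit of care is making sure the identifications above are correct: that the $G$-action on $X(M)$ really is implemented by $X(\lambda_g)$ (recorded in Example \ref{exa:nontriveva} for the natural $\Sigma_n$-set $\underline{n}$ and inherited here), and that $S^{\lambda_{g^{-1}}}$ is indeed the coordinate-permutation action of $g^{-1}$ on $S^M$. Once these identifications are in place the argument is a one-line cancellation, so there is no real obstacle. The generalisation to $n \ne 0$ is then automatic from the suspension isomorphism, as noted.
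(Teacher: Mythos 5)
Your proof is correct and follows essentially the same route as the paper: unwind the definition of $(c_g, g\cdot -)^*$ on a representative $f\colon S^M \to X(M)$, identify the two outer maps as the $g^{-1}$-action on $S^M$ and the $g$-action on $X(M)$, and cancel using $G$-equivariance of $f$. The reduction to $n=0$ via the suspension isomorphism is harmless but unnecessary, since the lemma is stated only for $\pi_0$; otherwise the argument matches the paper's.
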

\begin{proof} Let $x\in \pi_0^{G,\U_G}(X)$ be an arbitrary element, represented by a $G$-map $f:S^M\to X(M)$ for some finite $M\subseteq \U_G$. Then, by definition, $c_g^*(x)$ is the class represented by the composite
\[ S^{\varphi^*(M)}\xr{g^{-1}\cdot -} S^M \xr{f} X(M) \xr{X(g\cdot -)} X(M). \]
The map $X(g\cdot -):X(M)\to X(M)$ is equal to multiplication by $g$. So, since $f$ is $G$-equivariant, this composite equals $f$ and hence $c_g^*(x)=c_g^*([f])=[f]=x$, which proves the claim. 
\end{proof}

\begin{Remark} The category $\Fin_{\U}$ comes with a forgetful functor to the category $\Fin$ of finite groups. The functor is surjective on objects and morphisms, but it does \emph{not} have a section. In fact, for any non-trivial finite group $G$, there do not exist two lifts of the homomorphisms $i:\{e\}\to G$ and $p:G\to \{e\}$ such that their composite is the identity. This is because the second component of any preimage $(p:G\to \{e\},p^*(\U_{\{e\}})\hookrightarrow \U_G)$ is never surjective, since the $G$-set universe $p^*(\U_{\{e\}})$ is trivial. Hence, the second component of the composite is also not surjective, in particular not the identity. There are symmetric spectra $X$ for which $(\id_{\{e\}},\alpha:\U_{\{e\}}\hookrightarrow \U_{\{e\}})$ does not act surjectively on $\pi_0^{\{e\},\U_{\{e\}}}(X)$ for every $\alpha$ which is not surjective (this is the case in Example \ref{sec:notsemistable}), hence this shows that there is in general no way to turn the $\Fin_{\U}^{op}$-functor $\upi_0(X)$ into 
a $\Fin^{op}$-
functor.
\end{Remark}

\subsection{Transfer maps} \label{sec:transfers} The assignment $(G,\U_G)\mapsto \pi_0^{G,\U_G}(X)$ has more structure than that of a $\Fin_{\U}^{op}$-functor, it also allows \emph{transfer maps} of the form $\tr_H^G:\pi_0^{H,(i_H^G)^*(\U_G)}(X)\to \pi_0^{G,\U_G}(X)$ for a subgroup $H$ of $G$ and the restricted (complete) $H$-set universe $(i_H^G)^*(\U_G)$. The construction and properties of these transfer maps are similar to those for orthogonal spectra, so we will be brief (cf. \cite[Constr. III.3.14]{Sch15}).

Transfer maps are based on the following construction: Let $M\subseteq \U_G$ be a $G$-subset which contains a copy of $G/H$. By thickening up the embedding $G/H\hookrightarrow M\hookrightarrow \R^M$ we obtain another $G$-embedding $G\ltimes_H D(\R^M)\hookrightarrow \R^M$, where $D(-)$ denotes the closed unit disc. Collapsing everything outside the image of the interior of $G\ltimes_H D(\R^M)$ to a point yields a map $p_H^G:S^M\to G\ltimes_H S^M$, the ``Thom-Pontryagin collapse map''.

Now let $X$ be a symmetric spectrum of spaces and $x\in \pi_0^{H,i^*(\U_G)}(X)$ an element represented by an $H$-map $f:S^M\to X(M)$. Without loss of generality we can assume that $M$ is in fact a $G$-subset of $\U_G$ and allows a $G$-embedding of $G/H$. Then the transfer $\tr_H^G(x)\in \pi_0^{G,\U_G}(X)$ is defined as the class of the composite
\[ S^M\xr{p_H^G}G\ltimes_H S^M\xr{G\ltimes_H f} G\ltimes_H X(M)\xr{\mu} X(M), \]
where $\mu$ is the action map (which uses that $X(M)$ is a $G$-space).
\begin{Prop} \label{prop:doublecoset} The transfer maps $\tr_H^G$ do not depend on the choice of embedding $G/H\hookrightarrow \U_G$. They are additive and functorial in subgroup inclusions. Furthermore, they are related to the restriction maps by the following formulas:\begin{enumerate}[(i)]
	\item For every morphism $(\varphi:G\twoheadrightarrow K,\alpha:\varphi^*(\U_K)\hookrightarrow \U_G)$ in $\Fin_{\U}$ with surjective $\varphi$ and every subgroup $i:L\leq K$, the relation 
\[  (\varphi,\alpha)^*\circ {\tr}_L^K = {\tr}_{\varphi^{-1}(L)}^G\circ (\varphi_{|\varphi^{-1}(L)}:\varphi^{-1}(L)\to L,\alpha)^*\]
holds as maps $\pi_0^{L,(i_L^K)^*(\U_K)}(X)\to \pi_0^{G,\U_G}(X)$.
	\item For every pair of subgroups $H,J\leq G$ the double coset formula
 \[ (i_J^G)^*\circ {\tr}_H^G = \sum_{[g]\in J\backslash G/H} {\tr}^J_{J\cap gHg^{-1}}\circ c_g^*\circ (i_{g^{-1}Jg \cap H}^H)^* 
 \]	
holds.
\end{enumerate}
\end{Prop}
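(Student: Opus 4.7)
The plan is to verify each assertion by manipulating the Thom--Pontryagin collapse map at a single sufficiently large representative $G$-subset $M \subseteq \U_G$, then promoting to the stable level through the colimit definition of $\pi_0^{G,\U_G}(X)$. Throughout I would work with $M$ chosen to contain disjoint copies of $G/H$, $G/J$ and of each $G/(J\cap gHg^{-1})$ appearing in (ii), so that all required equivariant embeddings and tubular neighborhoods exist on the nose. The key observation is that at a fixed $M$ everything reduces to statements about based $G$-maps and the functoriality of $X(-)$ on bijections of finite $G$-sets, which are purely formal.

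For independence of the embedding $G/H \hookrightarrow \U_G$, additivity, and functoriality in subgroup inclusions: the space of $G$-embeddings $G/H \hookrightarrow \R^M$ is path-connected for $M$ large enough, so any two resulting collapse maps are $G$-equivariantly homotopic and represent the same class. Additivity follows since the Thom--Pontryagin collapse of a disjoint embedding is the wedge of the individual collapses. Functoriality in a tower $K \leq H \leq G$ comes from the natural identification $G\ltimes_H(H\ltimes_K S^M) \cong G\ltimes_K S^M$ together with a standard thickening argument showing $p_K^G \simeq (G\ltimes_H p_K^H)\circ p_H^G$.

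For part (i), set $L' := \varphi^{-1}(L)$. Surjectivity of $\varphi$ yields a $G$-equivariant bijection $G/L' \xrightarrow{\cong} \varphi^*(K/L)$, so any $K$-embedding $K/L \hookrightarrow M \subseteq \U_K$ pulls back along $\varphi$ and then along $\alpha$ to a $G$-embedding $G/L' \hookrightarrow \alpha(\varphi^*(M)) \subseteq \U_G$. The Thom--Pontryagin collapse $p_L^K : S^M \to K\ltimes_L S^M$ then agrees, as a $G$-map after restriction along $\varphi$ and composition with $\alpha$, with $p_{L'}^G$ applied to the transported embedding. Chasing this identification through the definitions of $\tr$ and $(\varphi,\alpha)^*$, and using the functoriality of $X(-)$ on the induced bijections, yields the stated equality.

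For the double coset formula (ii), the essential input is the $(J,H)$-biset decomposition $G = \bigsqcup_{[g]\in J\backslash G/H} JgH$ together with $JgH \cong J\times_{J\cap gHg^{-1}} H$ as a $(J,H)$-biset. Restricting the $G$-space $G\ltimes_H S^M$ along $J\leq G$ therefore splits as a $J$-equivariant wedge $\bigvee_{[g]} J\ltimes_{J\cap gHg^{-1}} (c_g^* S^M)$, and after perturbing the chosen tubular neighborhoods of the individual $J$-orbits inside $\R^M$ to be disjoint (which does not change the stable class, by the first paragraph), the same wedge decomposition holds at the level of the collapse map. Applying this decomposition to a representative $f$ of $x \in \pi_0^{H,(i_H^G)^*(\U_G)}(X)$, and identifying each summand with the composite computing $\tr^J_{J\cap gHg^{-1}} \circ c_g^* \circ (i^H_{g^{-1}Jg\cap H})^*(x)$, gives the formula. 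The main obstacle will be precisely this bookkeeping: ensuring that the conjugations $c_g^*$ enter with the correct equivariance (invisible on inner cosets by Lemma~\ref{lem:conjtrivial}, but genuine for $g$ in a nontrivial double coset) and that the geometric perturbation separating the $J$-orbits is compatible with the colimit structure. Since we have already promised that the construction parallels the orthogonal case of \cite[Constr. III.3.14]{Sch15}, this final verification is essentially formal and can be indicated briefly.
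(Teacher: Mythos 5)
Your proposal is correct and follows the same underlying Thom--Pontryagin/biset argument that the paper invokes implicitly: the paper's proof is merely a citation to \cite[Prop.~2.34, 2.35, Formula~3.15]{Sch15} for orthogonal spectra, and your sketch is precisely the standard unpacking of those arguments transported to the finite-$G$-set setting. The one caveat worth noting is that in (ii) the disjointness perturbation you mention is the step where the proof genuinely uses that one may enlarge $M$ inside the complete universe $\U_G$, and in (i) you should be careful that the identification $G/\varphi^{-1}(L)\cong\varphi^*(K/L)$ really does require surjectivity of $\varphi$ -- both of which you have in fact flagged, so the argument as sketched is sound.
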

\begin{proof} \cite[Prop. 2.34 and 2.35, Formula 3.15]{Sch15} for orthogonal spectra.
\end{proof}
Since every morphism $(\varphi,\alpha)$ in $\Fin_{\U}$ can be written as the composite of a morphism of type $(i)$ and a subgroup inclusion as in $(ii)$, these two can be combined to give a general formula describing the interaction between restrictions and transfers. Again, the definition of the transfer maps is extended to $\upi_n(X)$ via the suspension isomorphisms. 

\subsection{Semistability} \label{sec:semistability} In these terms, a $\mathcal{F}in$-\emph{global functor} in the sense of \cite{Sch15} (or, equivalently, an \emph{inflation functor} in the sense of \cite{We93}) can be described as a $\Fin_{\U}^{op}$-functor with transfers satisfying the relations of Lemma \ref{lem:conjtrivial} and Proposition \ref{prop:doublecoset} and for which the $\Fin_{\U}^{op}$-part factors through $\Fin^{op}$, i.e., for which the action of an element $(\varphi,\alpha)$ does not depend on the $\alpha$ (cf. \cite[Rmks. IV.4.23 and IV.4.24]{Sch15}). This leads to the following definition:
\begin{Def}[Global semistability]
A symmetric spectrum $X$ is called \emph{globally semistable} if the $\Fin_{\U}^{op}$-functor $\upi_n(X)$ factors through a $\Fin^{op}$-functor for every $n\in \mathbb{Z}$.
\end{Def}
Then the previous discussion implies:
\begin{Prop} If $X$ is globally semistable, the homotopy groups $\pi_*^{G,\U_G}(X)$ only depend on $\U_G$ up to canonical isomorphism (hence they can be denoted by $\pi_*^G(X)$) and the collection $\upi_*(X)=\{\pi_*^G(X)\}_{\text{G finite}}$ naturally forms a $\mathcal{F}in$-global functor. 
\end{Prop}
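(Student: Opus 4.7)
The plan is to use the semistability hypothesis to render the universe-dependence invisible, then transport the $\Fin_{\U}^{op}$-functor structure and transfer maps already established on $\upi_n(X)$ down to a $\Fin^{op}$-structure with transfers on the common value $\pi_n^G(X)$.

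First I would establish the canonical isomorphism. Given two complete $G$-set universes $\U_G$ and $\U_G'$, any $G$-equivariant isomorphism $\alpha : \U_G' \to \U_G$ (which exists by the uniqueness up to isomorphism of complete $G$-set universes) defines a morphism $(\id_G, \alpha) : (G, \U_G) \to (G, \U_G')$ in $\Fin_{\U}$, hence an induced map $(\id_G, \alpha)^* : \pi_n^{G, \U_G'}(X) \to \pi_n^{G, \U_G}(X)$. Since $X$ is globally semistable, the $\Fin_{\U}^{op}$-functor $\upi_n(X)$ factors through $\Fin^{op}$, so this map depends only on $\id_G$, not on $\alpha$. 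Contravariant functoriality then yields $\tau_{\U_G, \U_G} = \id$ and $\tau_{\U_G'', \U_G'} \circ \tau_{\U_G', \U_G} = \tau_{\U_G'', \U_G}$, so each $\tau$ is an isomorphism, and we may unambiguously write $\pi_n^G(X)$ for the common value.

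Next I would assemble the inflation-functor structure. For a homomorphism $\varphi : G \to K$, choose any $G$-embedding $\alpha : \varphi^*(\U_K) \hookrightarrow \U_G$ and define $\varphi^* : \pi_n^K(X) \to \pi_n^G(X)$ as $(\varphi, \alpha)^*$, composed with the canonical isomorphisms of the first step. Semistability again gives independence of $\alpha$, and contravariant functoriality in $\varphi$ descends from that of $\upi_n(X)$ on $\Fin_{\U}$. Transfers $\tr_H^G : \pi_n^H(X) \to \pi_n^G(X)$ are given by the construction of Section \ref{sec:transfers} applied using the universe $\U_G$, again precomposed with the canonical identification $\pi_n^H(X) \cong \pi_n^{H, (i_H^G)^*(\U_G)}(X)$. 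The relations required of a $\mathcal{F}in$-global functor, namely the triviality of inner conjugations and the double coset formula relating restrictions to transfers, follow directly from Lemma \ref{lem:conjtrivial} and Proposition \ref{prop:doublecoset}, which already hold at the level of $\upi_n(X)$ as a $\Fin_{\U}^{op}$-functor with transfers.

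The main point requiring care is the compatibility of the transfer construction with the canonical isomorphisms of the first step: when $\alpha : \U_G' \to \U_G$ is an equivariant isomorphism, one must verify that $(\id_G, \alpha)^* \circ \tr_H^G$, formed using $\U_G$, agrees with $\tr_H^G$ formed using $\U_G'$ after precomposition with the appropriate $(\id_H, \alpha)^*$. This is precisely the special case of Proposition \ref{prop:doublecoset}(i) with $\varphi = \id_G$ and $L = H$, so no new argument is needed. Once this compatibility is recorded, the remaining assembly is routine bookkeeping, and the suspension isomorphisms extend the conclusion from $n = 0$ to all $n \in \mathbb{Z}$.
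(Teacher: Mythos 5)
Your proposal is correct and follows the same route the paper takes implicitly: the paper leaves the proof to the reader as an immediate consequence of the definition of global semistability together with the characterization of $\mathcal{F}in$-global functors as $\Fin_{\U}^{op}$-functors with transfers (satisfying Lemma \ref{lem:conjtrivial} and Proposition \ref{prop:doublecoset}) whose $\Fin_{\U}^{op}$-part factors through $\Fin^{op}$. You have simply unwound the needed bookkeeping carefully, including the point about compatibility of transfers with the canonical universe-change isomorphisms via Proposition \ref{prop:doublecoset}(i) with $\varphi=\id_G$, which is the one step genuinely worth recording.
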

The class of globally semistable symmetric spectra includes a lot of examples and is closed under many operations, as the following proposition shows. For $(i)$ we recall from \cite[Def. 3.29]{Hau14} (and the remark preceding it) that a $G$-symmetric spectrum $X$ is called $G$-semistable if the $\Inj_H(\U_H,\U_H)$-action on $\pi_n^{H,\U_H}(X)$ is trivial for all $n\in \mathbb{Z}$ and all subgroups $H\leq G$.
\begin{Prop} \label{prop:semistable} The following hold:
\begin{enumerate}[(i)]
 \item A symmetric spectrum is globally semistable if and only if it is $G$-semistable for every finite group $G$.
 \item Global $\Omega$-spectra are globally semistable.
 \item Every symmetric spectrum underlying an orthogonal spectrum is globally semistable.
 \item Every symmetric spectrum $X$ for which every homotopy group $\pi_n^{G,\U_G}(X)$ is a finitely generated abelian group is globally semistable.
 \item The smash product of two globally semistable symmetric spectra is again globally semistable, as long as at least one smash factor is flat.
 \item A symmetric spectrum is globally semistable if and only if the morphism $q_X:X\to QX$ is a global $\upi_*$-isomorphism; in other words if and only if the map from the naive to the derived equivariant homotopy groups is an isomorphism.
 \item A morphism between globally semistable symmetric spectra is a global equivalence if and only if it is a global $\upi_*$-isomorphism.
\end{enumerate}
\end{Prop}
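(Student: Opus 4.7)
The overall strategy is that all seven items reduce, via item~(i), to the analogous $G$-semistability results for $G$-symmetric spectra proved in \cite[Sec.~3.4 and Sec.~7]{Hau14}. The plan is therefore to establish (i) first and then use it as a bridge for the remaining six items.

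For (i), the key observation is that any two morphisms $(\varphi,\alpha_1),(\varphi,\alpha_2)\colon (G,\U_G)\to (K,\U_K)$ of $\Fin_\U$ with the same underlying homomorphism $\varphi$ satisfy $(\varphi,\alpha_2)=(\varphi,\alpha_1)\circ (\id_G,\beta)$ for some $\beta\in\Inj_G(\U_G,\U_G)$: any $G$-equivariant injection between finite $G$-subsets of $\U_G$ extends, by completeness of $\U_G$, to a $G$-equivariant self-injection of $\U_G$. Consequently $\upi_n(X)$ factors through $\Fin^{op}$ if and only if for every finite group $H$ and every $\beta\in\Inj_H(\U_H,\U_H)$ the operation $(\id_H,\beta)^*$ is the identity on $\pi_n^{H,\U_H}(X)$. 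By item~(iv) of Section~\ref{sec:restrictions}, this is precisely the triviality of the $\Inj_H$-action for every finite $H$, i.e.\ $G$-semistability of $X$ (with trivial $G$-action) for every finite group $G$ in the sense of \cite{Hau14}.

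With (i) in hand, items~(ii)–(v) follow by applying the corresponding $G$-equivariant results of \cite{Hau14} group-by-group. For (ii), the global $\Omega$-spectrum condition at a faithful finite $G$-set $N$ makes $X\to \Omega^N(\sh^N X)$ an eventual $G$-level equivalence whose target is a genuine $G\Omega$-spectrum; the induced natural isomorphism of equivariant homotopy groups is $\Inj_G$-equivariant, so the $G$-semistability of $G\Omega$-spectra transfers to $X$. Items~(iii) and~(iv) are direct translations of their $G$-equivariant counterparts from \cite{Hau14}, and (v) follows likewise from the smash-product/flatness statement in \cite[Sec.~7]{Hau14}. For (vi), if $q_X$ is a global $\upi_*$-isomorphism then $\upi_*(X)\cong\upi_*(QX)$, and $QX$ is globally semistable by (ii), so $X$ is; conversely, if $X$ is globally semistable then both $X$ and $QX$ are $G$-semistable for every $G$ and $q_X$ is a $G$-stable equivalence between them, so by the $G$-version of (vii) in \cite[Sec.~3.4]{Hau14} it is a $\upi_*^{\U_G}$-isomorphism for every $G$, hence a global $\upi_*$-isomorphism. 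Item~(vii) is handled in the same way for the forward direction, with the reverse direction supplied by Proposition~\ref{prop:piisoglobal}.

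The main technical point is the bookkeeping in (i): one has to verify carefully that the decomposition of an arbitrary $\Fin_\U$-morphism via an $\Inj_G$-element is compatible with the way $\upi_n(X)$ was defined as a $\Fin_\U^{op}$-functor in Section~\ref{sec:restrictions}, so that the abstract factoring condition really translates into the concrete triviality of the $\Inj_G$-actions. Once this bridge is in place, all remaining items are systematic transcriptions of already-proven $G$-equivariant facts from \cite{Hau14}.
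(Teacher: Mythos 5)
Your overall strategy — establish (i) and then reduce everything else to the $G$-equivariant results of \cite{Hau14} — is the same as the paper's, and items (ii)–(vii) are handled essentially identically. However, there is a genuine gap in your proof of (i). You claim that any two $\Fin_{\U}$-morphisms $(\varphi,\alpha_1),(\varphi,\alpha_2)\colon(G,\U_G)\to(K,\U_K)$ satisfy $(\varphi,\alpha_2)=(\varphi,\alpha_1)\circ(\id_G,\beta)$ for some $\beta\in\Inj_G(\U_G,\U_G)$, i.e.\ that $\alpha_2=\beta\circ\alpha_1$ for a self-injection $\beta$ of $\U_G$. This is false in general: already for $G=K=\{e\}$ with $\U_G=\N$, if $\alpha_2$ is a bijection and $\alpha_1$ is not, then $\beta\circ\alpha_1$ is never surjective, so no such $\beta$ exists. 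The justification you give — extending a $G$-equivariant injection between \emph{finite} $G$-subsets of $\U_G$ — does not apply here because $\varphi^*(\U_K)$ is an infinite $G$-set, and the extension argument crucially uses that the complement of a finite subuniverse is again a complete universe.

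The fix is the weaker but sufficient observation used in the paper: for any two such $\alpha_1,\alpha_2$ one can find $\beta_1,\beta_2\in\Inj_G(\U_G,\U_G)$ with $\beta_1\circ\alpha_1=\beta_2\circ\alpha_2$ (e.g.\ by choosing a $G$-isomorphism $\varphi^*(\U_K)\sqcup\U_G\sqcup\U_G\cong\U_G$ and routing the two complements into the two spare copies of $\U_G$). Then $(\id,\beta_1)^*\circ(\varphi,\alpha_1)^*=(\id,\beta_2)^*\circ(\varphi,\alpha_2)^*$, and triviality of the $\Inj_G(\U_G,\U_G)$-actions makes both outer factors the identity, giving $(\varphi,\alpha_1)^*=(\varphi,\alpha_2)^*$. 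Alternatively, your idea can be salvaged at the element level: any class in $\pi_0^{K,\U_K}(X)$ is represented at a finite stage $M\subseteq\U_K$, the restrictions $\alpha_1|_{\varphi^*(M)},\alpha_2|_{\varphi^*(M)}$ are embeddings of a \emph{finite} $G$-set, and these \emph{can} be related by a self-injection of $\U_G$ (depending on $M$); but as written your argument treats the decomposition as an identity of $\Fin_{\U}$-morphisms, which does not hold. Note also that the conclusion of (i) — the equivalence between factoring through $\Fin^{op}$ and triviality of the $\Inj$-actions — is correct; only the claimed route to it fails, so everything downstream of (i) is fine once (i) is repaired.
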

\begin{proof} $(i)$: The ``only if'' part is clear. The other direction follows from the fact that given a group homomorphism $\varphi:G\to K$ and two $G$-embeddings $\alpha_1,\alpha_2:\varphi^*(\U_K)\hookrightarrow \U_G$, there exist $\beta_1,\beta_2\in \Inj_G(\U_G,\U_G)$ such that $\beta_1\circ \alpha_1=\beta_2\circ \alpha_2$.

Using $(i)$, items $(iii), (iv), (v)$ and $(vii)$ follow from \cite[Thm. 3.47]{Hau14}. Moreover, every global $\Omega$-spectrum can be replaced by a $G\Omega$-spectrum up to eventual level equivalence (as explained after Definition \ref{def:globalomega}), in particular up to $\upi^{\U_G}_*$-isomorphism. Hence, \cite[Thm. 3.47]{Hau14} also implies $(ii)$. If $q_X:X\to QX$ is a global $\upi_*$-isomorphism, then $X$ is globally semistable, since we just argued that $QX$ is globally semistable. If in turn $X$ is assumed to be globally semistable, we know that the global equivalence $q_X:X\to QX$ must be a global $\upi_*$-isomorphism by $(vii)$. This gives $(vi)$ and finishes the proof.
\end{proof}
\subsection{Example} \label{sec:notsemistable} We close this section with an example of a symmetric spectrum which is not globally semistable, the free symmetric spectrum $F_1^{\{e\}} S^1$. There is a natural $G$-isomorphism $(F_1^{\{e\}} S^1)(M)\cong M_+\wedge S^M$ (cf. \cite[Ex. 3.35]{Hau14}) which implies that
\begin{align*} \pi_0^{G,\U_G}(F_1^{\{e\}} S^1) & \cong \colim_{M\subseteq \U_G} [S^M,M_+\wedge S^M]^G \\ & \cong \colim_{M\subseteq \U_G}[S^M,(\U_G)_+\wedge S^M]^G \\ & \cong \pi_0^{G,\U_G}(\Sigma^{\infty}_+(\U_G)), \end{align*}
with $G$ acting on $\U_G$. The tom Dieck-splitting shows that this is a free abelian group with basis $\{\tr_H^G(x)\}$, where $(H,x)$ runs through representatives of $G$-conjugacy classes of pairs of a subgroup $H$ of $G$ and an $H$-fixed point $x$ of $(i_H^G)^*(\U_G)$.

Focusing on those basis elements which are not a transfer from a proper subgroup, we see:
\begin{Cor} The $\Fin_{\U}^{op}$-functor $\upi_0(F_1^{\{e\}} (S^1))$ contains the subfunctor
\begin{eqnarray*} (G,\U_G)& \mapsto & \mathbb{Z} [(\U_G)^G] \\
		  (\varphi:G\to K,\alpha:\varphi^*(\U_K)\hookrightarrow \U_G)& \mapsto & (\mathbb{Z}[(\U_K)^K]\hookrightarrow \mathbb{Z}[(\varphi^*(\U_K))^G]\xr{\mathbb{Z}[\alpha]} \mathbb{Z}[(\U_G)^G]).
\end{eqnarray*}
\end{Cor}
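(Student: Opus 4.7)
The plan is to construct the subfunctor inclusion explicitly on representatives and then verify compatibility with restrictions by a direct calculation. First I would define, for each pair $(G, \U_G)$, a homomorphism
\[
\iota_{G,\U_G}\colon \mathbb{Z}[(\U_G)^G] \to \pi_0^{G,\U_G}(F_1^{\{e\}} S^1)
\]
sending a $G$-fixed point $x \in (\U_G)^G$ to the class of the $G$-equivariant map $v \mapsto x \wedge v$ from $S^M$ to $M_+ \wedge S^M \cong (F_1^{\{e\}} S^1)(M)$, where $M \subseteq \U_G$ is any finite $G$-subset containing $x$. Independence of the choice of $M$ is immediate from the description of the colimit system. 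Under the isomorphism $\pi_0^{G,\U_G}(F_1^{\{e\}} S^1) \cong \pi_0^{G,\U_G}(\Sigma^{\infty}_+ \U_G)$ and the tom Dieck splitting recalled just before the statement, $\iota_{G,\U_G}(x)$ corresponds precisely to the basis element indexed by the pair $(H,x)=(G,x)$. Consequently $\iota_{G,\U_G}$ is injective, with image spanned by the $H=G$ summand of the splitting -- those basis elements which are not a transfer from a proper subgroup.

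Second, I would verify naturality with respect to a morphism $(\varphi\colon G \to K,\ \alpha\colon \varphi^*(\U_K) \hookrightarrow \U_G)$ in $\Fin_{\U}$. A $K$-fixed point $y \in (\U_K)^K$ is in particular $G$-fixed when viewed inside $\varphi^*(\U_K)$, and since $\alpha$ is $G$-equivariant we have $\alpha(y) \in (\U_G)^G$; hence the composite $\mathbb{Z}[(\U_K)^K] \hookrightarrow \mathbb{Z}[(\varphi^*(\U_K))^G] \xr{\mathbb{Z}[\alpha]} \mathbb{Z}[(\U_G)^G]$ of the claimed subfunctor sends $y$ to $\alpha(y)$. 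On the spectrum side, represent $\iota_{K,\U_K}(y)$ by $f\colon S^N \to N_+ \wedge S^N$, $v \mapsto y \wedge v$, for a $K$-subset $N \subseteq \U_K$ with $y \in N$, and unravel the definition of $(\varphi,\alpha)^*$ from Section \ref{sec:restrictions}. The resulting representative of $(\varphi,\alpha)^*\iota_{K,\U_K}(y)$ is the composite
\[
S^{\alpha(N)} \xr{S^{(\alpha_{|N})^{-1}}} S^N \xr{f} N_+ \wedge S^N \xr{(F_1^{\{e\}} S^1)(\alpha_{|N})} \alpha(N)_+ \wedge S^{\alpha(N)},
\]
and the last arrow acts by $(n,s) \mapsto (\alpha(n), \alpha_*(s))$, so the full composite sends $w \in S^{\alpha(N)}$ to $\alpha(y) \wedge w$. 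This is exactly a representative of $\iota_{G,\U_G}(\alpha(y))$, proving that the naturality square commutes; thus $\iota$ is a natural transformation of $\Fin_{\U}^{op}$-functors exhibiting the claimed subfunctor.

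The only real effort is bookkeeping: one must track the evaluation functoriality on $F_1^{\{e\}} S^1$ and observe that the sphere relabeling $(\alpha_{|N})^{-1}$ cancels against $\alpha_*$ in the final step, leaving the wedge coordinate recording $\alpha(y)$. All homotopical content is absorbed into the tom Dieck splitting cited in the excerpt, which simultaneously supplies injectivity and isolates the distinguished summand indexed by $H=G$. I do not anticipate any obstacle beyond this explicit unraveling.
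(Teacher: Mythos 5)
Your proposal is correct and matches the route the paper implicitly takes: the paper deduces the Corollary from the preceding tom Dieck splitting by singling out the basis elements with $H = G$, and you carry out exactly this identification together with the direct verification of naturality. The key calculation — unwinding $(\varphi,\alpha)^*$ for $F_1^{\{e\}}S^1$, using that $(F_1^{\{e\}}S^1)(N)\cong N_+\wedge S^N$ with functoriality in bijections given by $(n,s)\mapsto(\psi(n),\psi_*(s))$, so that the two relabelings cancel leaving $w\mapsto\alpha(y)\wedge w$ — is correct, and injectivity of $\iota_{G,\U_G}$ follows as you say from identifying $\iota_{G,\U_G}(x)$ with the basis element $\tr_G^G(x)=x$.
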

This determines the whole $\Fin_{\U}^{op}$-functor structure on $\upi_0(F_1^{\{e\}} S^1)$ via Proposition \ref{prop:doublecoset}. The action of a morphism $(\varphi,\alpha)$ in $\Fin_{\U}$ very much depends on the $\alpha$ and hence $F_1^{\{e\}} (S^1)$ is not globally semistable.

\section{Comparison to orthogonal spectra}
\label{sec:comparison}
In this section we show that global homotopy theory of symmetric spectra is equivalent to $\mathcal{F}in$-global homotopy theory of orthogonal spectra in the sense of \cite{Sch15}. For this we quickly recall the relevant definitions in the orthogonal context.

\begin{Def}[Orthogonal spectra] An orthogonal spectrum is a collection of based $O(n)$-spaces $\{X_n\}_{n\in \N}$ with structure maps $X_n\wedge S^1\to X_{n+1}$ whose iterates $X_n\wedge S^m\to X_{n+m}$ are $(O(n)\times O(m))$-equivariant.
\end{Def}
An orthogonal spectrum $X$ can be evaluated on $G$-representations $V$ via the formula $X_n\wedge_{O(\dim(V))} \Lin(\R^{\dim(V)},V)_+$, with $G$-acting through $V$ (where $\Lin(\R^{\dim(V)},V)$ denotes the space of linear isometries). Again, these are connected by $G$-equivariant generalized structure maps of the form $X(V)\wedge S^W\to X(V\oplus W)$.

Every orthogonal spectrum $X$ has an underlying symmetric spectrum of spaces $U(X)$ by restricting the $O(n)$-action on $X_n$ to a  $\Sigma_n$-action along the embedding as permutation matrices. The resulting restriction functor $U:Sp^O\to Sp^{\Sigma}$ has a left adjoint $L$, formally obtained via a left Kan extension (cf. \cite[Secs. I.3 and III.23]{MMSS01} for details).

\begin{Example} For a finite $G$-set $M$ there is a natural $G$-homeomorphism $U(X)(M)\cong X(\R^M)$ induced by linearizing a bijection $m\xr{\cong} M$ to a linear isometry $\R^m\xr{\cong} \R^M$.
\end{Example}
Using this $G$-homeomorphism, it is not hard to see that the equivariant homotopy groups of an orthogonal spectrum as defined in \cite[Sec. III.2]{Sch15} are isomorphic to those of the underlying symmetric spectrum defined in Section \ref{sec:homgroups}. Combining this with Proposition \ref{prop:semistable} we see that for a morphism of orthogonal spectra $f:X\to Y$ the following are equivalent:
\begin{itemize}
 \item $f$ is a $\mathcal{F}in$-equivalence in the sense of \cite[Def. IV.1.15]{Sch15}.
 \item $U(f)$ is a global $\upi_*$-isomorphism of symmetric spectra.
 \item $U(f)$ is a global equivalence of symmetric spectra.
\end{itemize}

Around this notion of equivalence Schwede defines the $\mathcal{F}in$-global model structure on orthogonal spectra (\cite[Thm. IV.1.19]{Sch15}). We have:
\begin{Theorem} \label{theo:quillen} The adjunction \[ L:Sp^{\Sigma}_{\T}\rightleftarrows Sp^O:U \] is a Quillen equivalence for the global model structure on symmetric spectra and the $\mathcal{F}in$-global model structure on orthogonal spectra.
\end{Theorem}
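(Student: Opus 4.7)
The plan is to establish the Quillen adjunction first and then upgrade it to a Quillen equivalence via the standard criterion: $U$ reflects weak equivalences between fibrant objects, and for every cofibrant symmetric spectrum $X$ the derived unit $X \to U(RLX)$ is a global equivalence, where $R$ denotes fibrant replacement in $Sp^O$.

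To obtain the Quillen adjunction I would first work at the level of the level model structures. Using the $G$-homeomorphism $U(X)(M) \cong X(\R^M)$ recorded in the example just before the theorem, every $\mathcal{F}in$-level fibration (or level equivalence) of orthogonal spectra becomes a global level fibration (or global level equivalence) of the underlying symmetric spectrum, because $\Sigma_n$ sits inside $O(n)$ as a finite subgroup and the $\mathcal{F}in$-global structure tests against all such subgroups. To pass to the stable structures, which are both left Bousfield localizations, it is enough to show that $U$ sends $\mathcal{F}in$-globally fibrant orthogonal spectra to global $\Omega$-spectra. This too is immediate from $U(X)(M) \cong X(\R^M)$: a finite $G$-set $M$ is faithful exactly when the permutation representation $\R^M$ is, and under this identification the generalized structure map $\widetilde{\sigma}_M^N$ of $U(X)$ corresponds to the orthogonal structure map $X(\R^M) \to \Omega^{\R^N} X(\R^M \oplus \R^N)$, which is a $G$-weak equivalence by fibrancy of $X$.

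Reflection of weak equivalences between fibrant objects is then immediate from the three equivalent bullet points recorded just before the theorem, which identify $\mathcal{F}in$-equivalences with global equivalences of the underlying symmetric spectra. For the derived unit condition, note that $U(RLX)$ is a global $\Omega$-spectrum and hence globally semistable by Proposition \ref{prop:semistable}(ii); by Proposition \ref{prop:semistable}(vii) it therefore suffices to show that the derived unit is a global $\upi_*$-isomorphism for every flat cofibrant $X$. I would reduce this to the case of free symmetric spectra $X = F_M^G(A)$ for a finite $G$-set $M$ and a cofibrant based $G$-space $A$, by a cell induction along flat cofibrations, using Proposition \ref{prop:flatness} together with its orthogonal counterpart to control pushouts and transfinite compositions. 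For free spectra, adjunction forces $L F_M^G(A)$ to agree with the analogous free orthogonal spectrum on $A$ in level $\R^M$, so the unit evaluated at a finite $K$-set $N$ is the inclusion $A \wedge_G \mathbf{\Sigma}(M, N) \hookrightarrow A \wedge_G \mathbf{O}(\R^M, \R^N)$ from the wedge of spheres indexed by bijections into the Thom space of linear isometric embeddings.

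The main obstacle is this free-spectrum comparison: after passing to a $\mathcal{F}in$-globally fibrant replacement on the orthogonal side, one must show that for each finite group $K$ the induced map on $\pi_n^{K, \U_K}$ is an isomorphism. I would carry this out by combining Proposition \ref{prop:fixedgomega}, which allows a fibrant replacement to be analyzed one finite group at a time, with the $G$-equivariant Quillen equivalence between $G$-symmetric and $G$-orthogonal spectra established in \cite{Hau14}. The essential geometric input is that the inclusion of bijections into linear isometric embeddings is an $H$-equivariant stable equivalence upon sufficient suspension for every finite subgroup $H$; assembling these statements across all finite groups by Proposition \ref{prop:fixedgomega} yields the global $\upi_*$-isomorphism needed to complete the induction and hence the Quillen equivalence.
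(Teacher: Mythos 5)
Your overall blueprint (establish the Quillen adjunction via the level structures, descend to the localized ones, then check that $U$ reflects equivalences and that the derived unit is a global equivalence on flat objects) is correct and matches the paper's strategy. Where you diverge is in how the derived unit is handled: the paper disposes of it in one line by citing \cite[Thm.~5.2]{Hau14}, which already proves, for every \emph{fixed} finite group $G$, that $X\to U(L(X))$ is a $G$-stable equivalence for every $G$-flat $G$-symmetric spectrum $X$. Since a flat symmetric spectrum with trivial action is $G$-flat for every $G$, applying this groupwise is exactly the definition of a global equivalence, and no cell induction, reduction to free spectra, or fresh analysis of bijections versus linear isometries is needed. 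Your plan essentially re-proves the content of that theorem from scratch, and in the process defers the genuine geometric input (that $\mathbf{\Sigma}(M,N)\hookrightarrow \mathbf{O}(\R^M,\R^N)$ is an equivariant stable equivalence after enough suspension) to a sentence without carrying it out. This is where all of the work actually lives, and it is precisely what \cite{Hau14} supplies; the cell induction is an unnecessary detour once you have that result in hand.

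Two further points to tighten up. First, your appeal to Proposition~\ref{prop:semistable}(vii) is slightly off: that item requires \emph{both} source and target to be globally semistable, but you only establish it for $U(RLX)$. The implication you actually need (global $\upi_*$-isomorphism implies global equivalence) is Proposition~\ref{prop:piisoglobal}, which holds unconditionally, so the conclusion is fine but the citation is wrong. Second, Proposition~\ref{prop:fixedgomega} does not play the role you assign to it of ``analyzing the fibrant replacement one group at a time''; it states that derived $G$-fixed points of a $G\Omega$-spectrum form a global $\Omega$-spectrum and is used to characterize global equivalences, not to mediate between group-by-group fibrancy conditions. The correct device for working one group at a time is simply the definition of global equivalence together with \cite[Thm.~5.2]{Hau14}.
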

\begin{proof} It is straightforward to see (using the natural isomorphism of evaluations described above) that the adjunction becomes a Quillen pair for the respective level model structures (the orthogonal one is defined in \cite[Prop. IV.1.15]{Sch15}). Applying \cite[Thm. 5.2] {Hau14} for every finite group $G$ we see that $L$ furthermore sends flat cofibrations which are also global equivalences to $\F in$-equivalences, hence $L$ becomes a left Quillen functor for the stable model structures and thus $(L,U)$ a Quillen pair.

Hence, it remains to show that the adjunction induces an equivalence between the homotopy categories. Since $U$ preserves and reflects weak equivalences, it suffices to show that for every flat symmetric spectrum $X$ the morphism $X\to U(L(X))$ is a global equivalence. But since every flat symmetric spectrum $X$ is $G$-flat when given the trivial $G$-action, this follows from \cite[Thm. 5.2]{Hau14}.
\end{proof}

\section{Examples} \label{sec:exa}
Every orthogonal spectrum can be restricted to a symmetric spectrum, so all examples in \cite{Sch15} also give examples for symmetric spectra and their global behavior. In this section we list some constructions of symmetric spectra (from the point of view of global homotopy theory) that do not arise from orthogonal spectra.

\subsection{Suspension spectra of $\I$-spaces} There is an unstable analog of symmetric spectra, called $\I$-spaces. Again, these were previously considered as a model for unstable non-equivariant homotopy theory (see, for example, \cite{SS12}, \cite{SS13} and \cite{Lind13}). They come with a Day convolution product, the commutative monoids over which model $E_{\infty}$-spaces.

In \cite[Sec. I.7]{Sch15} Schwede describes a global equivariant point of view on $\I$-spaces, which we quickly recall. Let $\I$ denote the category of finite sets and injective maps.
\begin{Def} An \emph{$\I$-space} is a functor from $\I$ to the category of spaces. \end{Def}
Let $A$ be an $\I$-space. By functoriality, if a finite set $M$ comes equipped with an action of a finite group $G$, the evaluation $A(M)$ becomes a $G$-space. Every injection of $G$-sets $M\hookrightarrow N$ induces a $G$-equivariant map $A(M)\to A(N)$. In \cite[Prop. I.7.17]{Sch15}, Schwede shows that there is a level model structure on $\I$-spaces where the weak equivalences and fibrations are those morphisms that become $G$-weak equivalences respectively $G$-fibrations on $-(M)$ for all finite groups $G$ and finite $G$-sets $M$.

An $\I$-space $A$ is called \emph{static} if for every injection $M\hookrightarrow N$ of faithful finite $G$-sets the induced map $A(M)^G\to A(N)^G$ is a weak equivalence. A morphism of $\I$-spaces is a \emph{global equivalence} if it induces bijections on all hom-sets into static $\I$-spaces in the level homotopy category. Together with the level cofibrations, these form the global model structure for $\I$-spaces of \cite[Thm. I.7.19]{Sch15}.

For a static $\I$-space $A$, the evaluation $A(M)$ at a faithful finite $G$-set $M$ should be thought of as the \emph{$G$-space underlying $A$}. By the definition of static its $G$-homotopy type does not depend on the choice of $M$. The $G$-space underlying an arbitrary $\I$-space $A$ is not as easy to describe directly, but it can be defined by first replacing by a globally equivalent static $\I$-space $QA$ and then taking the underlying $G$-space of $QA$. In this sense a global equivalence can be interpreted as a morphism that induces equivalences on all underlying $G$-spaces.

Every $\I$-space $A$ gives rise to a suspension symmetric spectrum of spaces $\Sigma^{\infty}_+ A$. Its $n$-th level is given by $A(\underline{n})_+\wedge S^n$ with diagonal $\Sigma_n$-action, the structure map \[ (A(\underline{n})_+\wedge S^n)\wedge S^1\to A(\underline{n+1})_+\wedge S^{n+1}\]
is the smash product of the induced map $A(\underline{n}\hookrightarrow \underline{n+1})$ with the associativity isomorphism $S^n\wedge S^1\cong S^{n+1}$. This construction is left adjoint to $\Omega^{\infty}:Sp^{\Sigma}_{\T}\to \I\text{-spaces}$ defined by $(\Omega^{\infty}(X))(\underline{n})\defeq \Omega^n X_n$. Since $\Omega^{\infty}$ turns global $\Omega$-spectra into static $\I$-spaces, it is not hard to see that the adjunction $(\Sigma^{\infty}_+,\Omega^{\infty})$ becomes a Quillen pair for the respective global model structures.

Let $A$ be a cofibrant static $\I$-space. One can show that the $G$-homotopy type of $\Sigma^{\infty}_+ A$ is that of the suspension spectrum of the underlying $G$-space of $A$ in the sense described above. Hence, suspension spectra of $\I$-spaces assemble various equivariant suspension spectra into one global object.
\begin{Remark} In all of the above one can alternatively consider functors from $\I$ to the category of simplicial sets. Then the analogous statements hold.
\end{Remark}

\begin{Example}[Global classifying spaces] Let $G$ be a finite group and $M$ a finite $G$-set. This data gives rise to an $\I$-space $\I(M,-)/G$ whose evaluation on a finite set $N$ is the set of injective maps from $M$ to $N$, modulo the $G$-action by pre-composition. Giving a morphism from $\I(M,-)/G$ to an $\I$-space $A$ is equivalent to picking a $G$-fixed point in the evaluation $A(M)$. So - by definition of the notion of global equivalence - the global homotopy type of $\I(M,-)/G$ is the same for all \emph{faithful} $G$-sets $M$. The $\I$-spaces $\I(M,-)/G$ for faithful $M$ are called \emph{global classifying spaces of $G$}. Given another finite group $K$, the $K$-space underlying $\I(M,-)/G$ is a classifying space for principal $G$-bundles in $K$-spaces, cf. \cite[Rmk. I.2.13]{Sch15}. This is easiest to see via the equivalence in \cite[Thm. I.7.26]{Sch15}.

Ranging through all finite groups $G$, the suspension spectra of global classifying spaces of finite groups (which are isomorphic to global free spectra of the form $F^G_{M}S^M$) form a set of compact generators of the triangulated $\mathcal{F}in$-global stable homotopy category.
\end{Example}

\subsection{Ultracommutative localizations} Let $A\subseteq \mathbb{Q}$ be a subring, $M(A,1)$ a Moore space for $A$ in degree $1$ and $i:S^1\to M(A,1)$ a map inducing the inclusion $\mathbb{Z}\hookrightarrow A$ on first homology. We define a symmetric spectrum $MA$ via $MA_n=M(A,1)^{\wedge n}$ with permutation $\Sigma_n$-action and structure map \[ M(A,1)^{\wedge n}\wedge S^1\xr{\id\wedge i} M(A,1)^{\wedge (n+1)}. \]
The associativity homeomorphisms $M(A,1)^{\wedge n}\wedge M(A,1)^{\wedge m}\cong M(A,1)^{\wedge (n+m)}$ together with the equality $S^0= M(A,1)^{\wedge 0}$ give $MA$ the structure of an ultracommutative symmetric ring spectrum.

To determine the global homotopy type of $MA$ we note that the map $M(A,1)\wedge S^1\xr{\id\wedge i} M(A,1)^{\wedge 2}$ is a weak equivalence of spaces, since $A\otimes \mathbb{Z}\to A\otimes A$ is an isomorphism. So, given a subgroup $H\leq \Sigma_n$, the map
\[ M(A,1)\wedge (S^n)^H\cong M(A,1)\wedge S^{\wedge (\underline{n}/H)}\xr{(\id\wedge i^{\wedge (\underline{n}/H)})} M(A,1)\wedge M(A,1)^{\wedge (\underline{n}/H)}\cong M(A,1)\wedge (M(A,1)^{\wedge n})^H \]
is also a weak equivalence. In other words, the morphism $\Sigma^{\infty} (M(A,1))\to \sh MA$ adjoint to the identity of $M(A,1)$ is a global level equivalence. The same argument also shows that $\alpha_{MA}:S^1\wedge MA\to \sh MA$ is a positive global level equivalence. So we find that $MA$ is globally equivalent to a desuspension of the suspension spectrum of $M(A,1)$ and hence its global homotopy type is that of the homotopy colimit of the sequence
\[ \mathbb{S}\xr{\cdot n_1} \mathbb{S}\xr{\cdot n_2} \mathbb{S}\xr{\cdot n_3}  \hdots \]
where the $n_i$ range through the elements of $\mathbb{Z}$ that become inverted in $A$. Thus, the (derived) smash product $-\wedge MA$ computes the $A$-localization in the global homotopy category. On equivariant homotopy groups it has the effect of tensoring with $A$.
 
In particular, the ultracommutative structure on $MA$ can be used to see that arithmetic localizations of ultracommutative symmetric ring spectra are again ultracommutative symmetric ring spectra, which is not a priori clear and does not hold in general for equivariant localizations (cf. \cite{HiHo14}, in particular Section 4.1).

\begin{Remark}
The construction of $MA$ above works more generally for any based space $X$ together with a based map $S^1\to X$. This gives a functor from the category of based spaces under $S^1$ to ultracommutative ring spectra, which is left adjoint to sending an ultracommutative ring spectrum $Z$ to the unit map $S^1\to Z_1$. The latter is a right Quillen functor for the positive global model structure and the usual Quillen model structure on spaces under $S^1$, turning the adjunction into a Quillen pair. In fact, the adjunction is already a Quillen pair if one uses the non-equivariant positive projective model structure on commutative symmetric ring spectra (as constructed in \cite[Thm. 15.1]{MMSS01}).
This implies that the ultracommutative ring spectra that arise through this construction are multiplicatively left-induced from non-equivariant commutative ring spectra in the sense of Appendix~\ref{sec:appendix}.
\end{Remark}

\subsection{Global algebraic $K$-theory} In \cite{Sch13alg} Schwede introduces a symmetric spectrum model for global (projective or free) algebraic $K$-theory of a ring $R$. Below we summarize the free version. In fact we give a slight variation of that of \cite{Sch13alg}, as we explain in Remark \ref{rem:difference}.

Let $R$ be a discrete ring. Each level $kR(M)$ is the geometric realization of a simplicial space $\{kR(M)_n\}_{n \in \N}$ that we now explain. A $0$-simplex of this simplicial space is represented by a finite unordered labeled configuration $(W_1,\hdots,W_k;x_1,\hdots,x_k)$ of the following kind:
\begin{itemize}
	\item The $x_i$ are points in the sphere $S^M$.
	\item The $W_i$ are finitely generated free submodules of the polynomial ring $R[M]$ with variable set $M$, such that their sum is direct and the inclusion $W_1\oplus \hdots \oplus W_k \hookrightarrow R[M]$ allows an $R$-linear splitting.
\end{itemize}
These configurations are considered up to the equivalence relation that a labeled point $(W_i,x_i)$ can be left out if either $W_i$ is zero or $x_i$ the basepoint, and that if two $x_i$ are equal they can be replaced by a single one with label the sum of the previous labels. The $\Sigma_M$-action is the diagonal one through its actions on $S^M$ and $R[M]$.

General $n$-simplices are given by similar equivalence classes of configurations, where instead of a single free submodule $W_i$, each point $x_i$ carries an $n$-chain of $R$-module isomorphisms $(W_{i_0}\xr{\cong} W_{i_1}\xr{\cong} \hdots \xr{\cong} W_{i_n})$, such that for every $0\leq j\leq n$ the tuple $(W_{1_j},\hdots,W_{k_j})$ satisfies the conditions above. The simplicial structure maps are the usual ones from the nerve and do not affect the $x_i$. The spectrum structure maps $kR(M)\wedge S^N\to kR(M\sqcup N)$ are given by smashing the configurations with an element of $S^N$ and leaving the labels unchanged.

In \cite{Sch13alg} Schwede shows the following:

\begin{itemize}
	\item The symmetric spectrum $kR$ is globally semistable.
	\item Its fixed point spectrum (cf. \cite[Sec. 6]{Sch13alg}) represents the direct sum $K$-theory of $R[G]$-lattices, i.e., $R[G]$-modules that are finitely-generated free as $R$-modules. In particular, the equivariant homotopy groups $\pi_*^G(kR)$ are the $K$-groups of $R[G]$-lattices.
	\item If $R$ is commutative, the smash product of modules gives $kR$ the structure of an ultracommutative symmetric ring spectrum.
\end{itemize}

If $R$ satisfies dimension invariance, the spectrum $kR$ comes with a natural filtration: Let $kR^n(M)$ be the subspace of $kR(M)$ of those configurations $(W_1,\hdots,W_k;x_1,\hdots,x_k)$ where the sum of the $R$-ranks of the $W_i$ is at most $n$, and similarly for higher simplices. These subspaces are closed under the simplicial and spectrum structure and thus define a symmetric subspectrum $kR^n$. This gives a filtration
\[ *=kR^0\to kR^1\to \hdots \to kR=\colim_{n\in \N} kR^n. \]
The underlying non-equivariant filtration is studied by Arone and Lesh in \cite{AL10}, where they call it the \emph{modified stable rank filtration} of algebraic $K$-theory. In joint work with Dominik Ostermayr \cite{HO15} we extend some of their results to the global context to show that the subquotients $kR^n/kR^{n-1}$ are globally equivalent to suspension spectra of certain $\I$-spaces associated to the lattice of non-trivial direct sum decompositions of $R^n$. This can be used to give an algebraic description of the $\mathcal{F}in$-global functors $\pi_0^G(kR^n)$.

\begin{Remark} \label{rem:difference} The version of $kR$ we described here differs slightly from the one in \cite{Sch13alg}. There the tuple $(W_1,\hdots,W_k)$ has to satisfy the additional property that for every monomial $t=\prod_{m\in M} m^{i_m}\in R[M]$ there is at most one $i$ such that $W_i$ contains an element whose $t$-component is non-trivial (which in that setup in particular guarantees that the sum of the $W_i$ is direct). The inclusion from the $kR$ in \cite{Sch13alg} to the one above is a global level equivalence.
\end{Remark}

\appendix

\section{Model structures with respect to families} \label{sec:appendix}
In this appendix we explain how to construct model structures with respect to global families of finite groups. For every such family we define two model structures, a projective and a flat one, both useful for constructing derived adjunctions. In the case of the family of trivial groups (where the homotopy category is the non-equivariant stable homotopy category) the projective model structure equals the one in \cite[Sec. 5.1]{HSS00}, the flat model structure is the one introduced in \cite{Shi04}. For the global family of all finite groups the two model structures coincide.

\begin{Def}[Global family] A \emph{global family} is a non-empty class of finite groups which is closed under subgroups, quotients and isomorphism.
\end{Def}

Let $\F$ be a global family.
\begin{Def} A morphism $f:X\to Y$ of symmetric spectra is called \begin{itemize}
\item an \emph{$\F$-level equivalence} if $f_n^H:X_n^H\to Y_n^H$ is a weak equivalence for all subgroups $H\leq \Sigma_n$ which lie in $\F$.
\item a \emph{projective $\F$-level fibration} if $f_n^H:X_n^H\to Y_n^H$ is a Serre/Kan fibration for all subgroups $H\leq \Sigma_n$ which lie in $\F$. 
\item a \emph{projective $\F$-cofibration} if each latching map $\nu_n[f]:X_n\cup_{L_n (X)} L_n(Y)\to Y_n$ is a $\Sigma_n$-cofibration with relative isotropy in $\F$.
\item a \emph{flat $\F$-level fibration} if it has the right lifting property with respect to all flat cofibrations (as defined in Definition \ref{def:levmod}) that are also $\F$-level equivalences.
\end{itemize}
\end{Def}

Then the following two propositions can again be obtained via \cite[Prop. 2.30]{Hau14}.
\begin{Prop} The classes of $\F$-level equivalences, projective $\F$-level fibrations and projective $\F$-cofibrations define a cofibrantly generated, proper and monoidal model structure on the category of symmetric spectra.
\end{Prop}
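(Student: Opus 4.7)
My plan is to apply the general existence criterion from \cite[Prop. 2.30]{Hau14} exactly as in the proof of Proposition \ref{prop:levmod}, but restricted to the family $\F$. Specifically, I would take as generating cofibrations and generating acyclic cofibrations the sets
\[ I_{\F}^{\mathrm{proj}}=\{F_{\underline{n}}^H(i)\ |\ n\in\N,\ H\leq \Sigma_n \text{ with }H\in\F,\ i\in I\} \quad \text{and} \quad J_{\F}^{\mathrm{proj}}=\{F_{\underline{n}}^H(j)\ |\ n\in\N,\ H\leq \Sigma_n \text{ with }H\in\F,\ j\in J\}, \]
where $I, J$ are the standard generators on based spaces or simplicial sets. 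By the $(F_M^H,-(M)^H)$-adjunction, a map has the right lifting property with respect to $J_{\F}^{\mathrm{proj}}$ (respectively $I_{\F}^{\mathrm{proj}}$) iff each level $f_n^H$ is a Serre/Kan fibration (resp. an acyclic fibration) for every $H\leq\Sigma_n$ with $H\in\F$, which matches the intended projective $\F$-level fibrations (respectively the intersection with $\F$-level equivalences).

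Verification of the strong consistency condition of \cite[Def. 2.29]{Hau14} amounts to two points: the sources and targets of $J_{\F}^{\mathrm{proj}}$ are small with respect to projective $\F$-cofibrations (routine, since each $F_{\underline{n}}^H$ preserves filtered colimits and compact generators), and every relative $J_{\F}^{\mathrm{proj}}$-cell complex is an $\F$-level equivalence. The latter reduces, via the usual latching-filtration argument of \cite[Sec. 5.2]{HSS00}, to the corresponding fact for equivariant spaces or simplicial sets at each level $n$ and each subgroup $H\leq\Sigma_n$ in $\F$. The same latching argument identifies the cofibrations of the resulting model structure with the projective $\F$-cofibrations as defined.

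Properness is then inherited levelwise from the equivariant Quillen model structure. Left properness: since a projective $\F$-cofibration is in particular a levelwise $H$-cofibration (for relevant isotropies $H\in\F$), any pushout against it is levelwise a cobase change preserving $H$-weak equivalences, hence preserves $\F$-level equivalences. Right properness follows dually from the observation that a projective $\F$-level fibration is an $H$-fibration on each relevant level, and pullbacks of $H$-weak equivalences along $H$-fibrations are $H$-weak equivalences.

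The main obstacle is monoidality, which I would handle by verifying the pushout-product axiom on generators. Using the isomorphism $F_{\underline{m}}^H(A)\wedge F_{\underline{n}}^K(B)\cong F_{\underline{m+n}}^{H\times K}(A\wedge B)$ and the behavior of the pushout-product on generating (acyclic) cofibrations of equivariant spaces, the question reduces to the statement that if $H\in\F$ and $K\in\F$ then all isotropy groups appearing in the levelwise action of $H\times K\leq\Sigma_{m+n}$ still lie in $\F$. This is exactly the point handled group-by-group by \cite[Cor. 2.39]{Hau14}, since every such isotropy is a subgroup of a quotient of $H\times K$ and embeds as a subgroup of some $\Sigma_{p}$ with the induced action factoring through one of $H$ or $K$ (the closure of $\F$ under subgroups and quotients then suffices). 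The unit axiom is automatic as $\mathbb{S}=F_{\underline{0}}^{\{e\}}(S^0)$ is projective $\F$-cofibrant, completing the proof.
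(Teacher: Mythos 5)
Your approach---invoking \cite[Prop. 2.30]{Hau14} with generating (acyclic) cofibrations the free spectra $F_{\underline{n}}^H(-)$ on $H\in\F$, and deducing properness levelwise---matches the paper's one-line proof in spirit, and the existence and properness parts are fine.

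The monoidality argument, however, has a gap. You reduce to the pushout-product of generators, $F_{\underline{m}}^H(i)\square F_{\underline{n}}^K(j)\cong F_{\underline{m+n}}^{H\times K}(i\square j)$, and then assert that the resulting isotropy groups ``factor through one of $H$ or $K$'', so that closure of $\F$ under subgroups and quotients suffices. This is not correct. For $A$ a based space with trivial $(H\times K)$-action, the level $m+n$ of $F_{\underline{m+n}}^{H\times K}(A)$ is $A\wedge\left(\Sigma_{m+n}/(H\times K)\right)_+$, and the $\Sigma_{m+n}$-isotropy of the identity coset is precisely $H\times K$. Hence the relative isotropy of the latching map of the pushout-product at level $m+n$ contains $H\times K$, and for the target to be a projective $\F$-cofibration one needs $H\times K\in\F$, i.e.\ $\F$ must be closed under finite products. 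The paper's definition of global family only requires closure under subgroups, quotients and isomorphism, and that is not enough: the class of cyclic groups satisfies it, yet $\mathbb{Z}/2\times\mathbb{Z}/2$ is not cyclic, so the pushout-product axiom for the cofibrations fails. Your argument needs (and the Proposition as stated tacitly requires) the additional hypothesis that $\F$ be multiplicative, i.e.\ closed under products; closure under subgroups and quotients alone does not repair the gap.
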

\begin{Prop} The classes of $\F$-level equivalences, flat $\F$-level fibrations and flat cofibrations define a cofibrantly generated, proper and monoidal model structure on the category of symmetric spectra.
\end{Prop}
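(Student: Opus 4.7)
The proof will follow the same template as Proposition \ref{prop:levmod}: apply the general machinery of \cite[Prop. 2.30]{Hau14} with a suitable input model structure on each level. Concretely, for each $n \in \N$ I would feed in the model structure on $\Sigma_n$-spaces (or $\Sigma_n$-simplicial sets) whose weak equivalences are the maps inducing weak equivalences on $H$-fixed points for every $H \leq \Sigma_n$ with $H \in \F$, but whose cofibrations are all $\Sigma_n$-cofibrations with unrestricted isotropy. Such a model structure exists as a standard mixed/family model structure on $G$-spaces; it can also be viewed as a left Bousfield localization of the genuine $\Sigma_n$-model structure, with generating acyclic cofibrations built from cells of the form $(\Sigma_n/K)_+$ for $K \in \F$, while the generating cofibrations allow arbitrary subgroups.

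Feeding this input into \cite[Prop. 2.30]{Hau14} with $G$ the trivial group produces the desired cofibrantly generated model structure on $Sp^\Sigma$, provided the strong consistency condition of \cite[Def. 2.29]{Hau14} holds. This is the main routine step: one must verify that attaching cells built from generating (acyclic) cofibrations at level $n$ induces, via the iterated structure maps, maps between latching diagrams at higher levels that are still $\F$-level equivalences. Since $\F$ is closed under subgroups, every isotropy subgroup that appears in a $(\Sigma_n \times \Sigma_m)$-equivariant pushout-product diagram built from an $\F$-subgroup remains in $\F$, and the check reduces to the analogous verification carried out for Proposition \ref{prop:levmod}, restricted to subgroups in $\F$.

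Properness is inherited levelwise: left properness because pushouts of flat cofibrations along $\F$-level equivalences remain $\F$-level equivalences (this is just the levelwise statement for the family model structure on $\Sigma_n$-spaces), and right properness dually, using that flat $\F$-level fibrations are in particular $H$-fibrations for each $H \in \F \cap \mathrm{sub}(\Sigma_n)$. For monoidality I would check the pushout-product axiom on generating (acyclic) cofibrations, via \cite[Cor. 2.39]{Hau14} applied one subgroup at a time: the pushout product of two flat cofibrations is flat by Proposition \ref{prop:levmod}, and if one factor is additionally an $\F$-level equivalence then closure of $\F$ under subgroups makes the pushout product an $\F$-level equivalence as well.

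The main obstacle I anticipate is not any single verification but the bookkeeping around pushout products where one factor has generating cofibrations with isotropy outside of $\F$ while the other is an acyclic generator with isotropy inside $\F$; this is the point at which the subgroup-closure of $\F$ must be invoked carefully in order to conclude that the pushout-product is still an $\F$-level equivalence. Once this interaction is pinned down, the remaining statements fall out from the same combination of \cite[Prop. 2.30]{Hau14}, \cite[Cor. 2.39]{Hau14}, and the properness of the input model structures that was already used in Proposition \ref{prop:levmod}.
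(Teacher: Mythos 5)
Your proposal takes essentially the same route as the paper, whose proof is a one-liner: apply \cite[Prop.~2.30]{Hau14} with $G$ the trivial group, just as in Proposition~\ref{prop:levmod}, feeding in the mixed ($\F$-local, flat-cofibrant) model structure on $\Sigma_n$-spaces at each level and citing \cite[Cor.~2.39]{Hau14} for monoidality. One small inaccuracy worth flagging: the generating acyclic cofibrations of the mixed model structure on $\Sigma_n$-spaces are not simply free cells $(\Sigma_n/K)_+$ for $K\in\F$ --- those generate the \emph{projective} $\F$-acyclic cofibrations, whereas the mixed acyclic cofibrations (all $\Sigma_n$-cofibrations that are $\F$-equivalences) form a strictly larger class, so the generating set must be enlarged (e.g.\ by suitable pushout-products) to detect the mixed fibrations. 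This does not affect the overall strategy, which correctly reduces everything to the machinery of \cite[Prop.~2.30]{Hau14}.
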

From the point of view of $\F$-global homotopy theory we have to remember the $G$-homotopy type of a symmetric spectrum for all groups $G$ in $\F$, which leads to the following definition of stable equivalence:
\begin{Def}[$\F$-global equivalences] A morphism $f:X\to Y$ is called an \emph{$\F$-global equivalence} if it is a $G$-stable equivalence (in the sense of Definition \ref{def:gstable}) for all groups $G\in \F$.
\end{Def}

A morphism of symmetric spectra is called a \emph{projective (flat) $\F$-fibration} if it has the left lifting property with respect to all morphisms that are projective $\F$-cofibrations (respectively flat cofibrations) and $\F$-equivalences. Then we have:
\begin{Prop} The classes of $\F$-global equivalences, projective $\F$-fibrations and projective $\F$-cofibrations determine a cofibrantly generated, proper and monoidal model structure on the category of symmetric spectra, called the \emph{projective $\F$-global stable model structure}.
\end{Prop}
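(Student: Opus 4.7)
The plan is to run the same Bousfield localization machinery that proved Theorem \ref{theo:stablemod}, now with the projective $\F$-level model structure as input and an $\F$-global $\Omega$-spectrum replacement functor built from a smaller set of generators. First I would introduce $\F$-global $\Omega$-spectra as those symmetric spectra $X$ for which the adjoint generalized structure map $\widetilde{\sigma}_M^N\colon X(M)\to \Omega^N X(M\sqcup N)$ is a $G$-weak equivalence for every $G\in\F$ and every pair $M,N$ of finite $G$-sets with $M$ faithful. Then I would form the set
\[ J_{\F,\mathrm{proj}}^{st} \defeq \{ i\square \overline{\lambda}_M^N \mid i\in I,\ G\in\F,\ M,N\text{ finite $G$-sets},\ M\text{ faithful}\}\cup J_{\F,\mathrm{proj}}^{lev}, \]
where $J_{\F,\mathrm{proj}}^{lev}$ is a generating set for the projective $\F$-level acyclic cofibrations and $\overline{\lambda}_M^N\colon F_{M\sqcup N}^G(S^N)\to \mathrm{Cyl}(\lambda_M^N)$ is the mapping cylinder factorization already used in Section \ref{sec:stablemod}. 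Since $G\in\F$ and $\F$ is closed under subgroups, the domains and codomains appearing here are projectively $\F$-cofibrant by construction of the free spectra $F_M^G$, so the maps in $J_{\F,\mathrm{proj}}^{st}$ are in particular projective $\F$-cofibrations.

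Next, the proof of Proposition \ref{prop:omegalifting} goes through verbatim, using the topological/simplicial enrichment of the projective $\F$-level model structure, and yields that $X$ has the right lifting property against $J_{\F,\mathrm{proj}}^{st}$ if and only if $X$ is a projective $\F$-level fibrant $\F$-global $\Omega$-spectrum. Applying the small object argument gives a functorial replacement $q\colon \mathrm{id}\to Q$ whose values are $\F$-global $\Omega$-spectra and whose components are relative $J_{\F,\mathrm{proj}}^{st}$-cell complexes; invoking \cite[Props.\ 4.2, 4.4, 4.5]{Hau14} for each $G\in\F$ separately shows that these cell complexes are $\F$-global equivalences, hence $q_X$ is always an $\F$-global equivalence and $Q$ preserves $\F$-global equivalences. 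The analog of Lemma \ref{lem:leveqomega} then shows that an $\F$-global equivalence between $\F$-global $\Omega$-spectra is already a projective $\F$-level equivalence (the argument only uses $\Sigma_n$-stability applied to subgroups of $\Sigma_n$ lying in $\F$, which is valid because $\F$ is closed under subgroups).

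With these ingredients in place, I would apply \cite[Thm.\ 9.3]{Bou01} to the projective $\F$-level model structure with respect to the pair $(Q,q)$. Axiom (A1) is the projective analog of Example \ref{exa:levelisstable}; axiom (A2) follows from the preceding Lemma applied to $q_{QX}$ and $Qq_X$; and axiom (A3) is proved by the pullback-square argument already used in Theorem \ref{theo:stablemod}, which works without fibrancy hypotheses by the dual of \cite[Prop.\ 4.4]{Hau14} applied group by group. Monoidality follows from the pushout-product axiom for the $G$-flat (and hence $G$-projective) model structure on $GSp^{\Sigma}$ for each $G\in\F$, by \cite[Prop.\ 4.13]{Hau14}; properness is inherited from the level model structure together with the same pullback argument.

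The main technical obstacle I expect is the verification that the cellular arguments imported from \cite{Hau14} — which are written for the $G$-flat setting — transfer correctly to the projective $\F$-setting; in particular one must check that $h$-cofibration/gluing arguments still apply to relative $J_{\F,\mathrm{proj}}^{st}$-cell complexes, and that the pushout-product of a projective $\F$-cofibration with $\overline{\lambda}_M^N$ lands again in the class of projective $\F$-cofibrations. This last point uses the description of free spectra $F_M^G(A)$ and the fact that $\F$ being closed under subgroups and quotients controls the isotropy appearing in iterated smash products and pushouts. Once these bookkeeping points are settled, all remaining verifications reduce formally to the absolute global case already handled.
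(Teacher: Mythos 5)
The overall strategy you propose -- Bousfield localize the projective $\F$-level model structure at the generating set $\{ i\square \overline{\lambda}_M^N \mid G\in\F,\ M\text{ faithful}\}\cup J_{\F,\mathrm{proj}}^{lev}$, build an $\F$-global $\Omega$-spectrum replacement $Q$ by the small object argument, and verify Bousfield's (A1)--(A3) by restricting the arguments of Theorem \ref{theo:stablemod} to groups in $\F$ -- is exactly the route the paper takes (its ``proof'' is only a two-sentence sketch saying precisely this). The definition of $\F$-global $\Omega$-spectrum, the analog of Proposition \ref{prop:omegalifting}, the analog of Lemma \ref{lem:leveqomega} via $\Sigma_n$-subgroups in $\F$, and the properness argument via the dual of \cite[Prop.\ 4.4]{Hau14} applied for each $G\in\F$ are all as intended.

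The monoidality argument, however, does not hold up as you have written it. You claim that monoidality ``follows from the pushout-product axiom for the $G$-flat (and hence $G$-projective) model structure on $GSp^{\Sigma}$ for each $G\in\F$,'' but a group-by-group reduction cannot establish the pushout-product axiom for the \emph{projective} $\F$-structure. The pushout-product of two generating projective $\F$-cofibrations $F_{\underline{m}}^H(i)$ and $F_{\underline{n}}^K(j)$ with $H,K\in\F$ is a map whose level $m+n$ contains the $\Sigma_{m+n}$-orbit $\Sigma_{m+n}/(H\times K)$, so its attaching cells have isotropy $H\times K$. A global family in the sense of the appendix (closed under subgroups, quotients and isomorphism) is not required to be closed under finite products, so $H\times K$ need not lie in $\F$, and the pushout-product need not be a projective $\F$-cofibration. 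Your closing claim that ``$\F$ being closed under subgroups and quotients controls the isotropy appearing in iterated smash products'' is exactly what fails. To make the monoidality statement correct for the projective structure one must either assume $\F$ is multiplicative (closed under products) or content oneself with the flat $\F$-global structure, where this issue evaporates because flat cofibrations carry no isotropy restriction. (The paper's own sketch also does not address this point, so this is a subtlety worth flagging on both sides.)
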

\begin{Prop} The classes of $\F$-global equivalences, flat $\F$-fibrations and flat cofibrations determine a cofibrantly generated, proper and monoidal model structure on the category of symmetric spectra, called the \emph{flat $\F$-global stable model structure}.
\end{Prop}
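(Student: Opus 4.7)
My plan is to mimic the proof of Theorem \ref{theo:stablemod}, working relative to the flat $\F$-level model structure established in the preceding proposition and invoking Bousfield localization in the form of \cite[Thm. 9.3]{Bou01}. The first step is to introduce the $\F$-relative analogue of Definition \ref{def:globalomega}: call $X$ an \emph{$\F$-global $\Omega$-spectrum} if for every $G \in \F$ and all finite $G$-sets $M, N$ with $M$ faithful the adjoint generalized structure map $\widetilde{\sigma}_M^N: X(M) \to \Omega^N X(M \sqcup N)$ is a $G$-weak equivalence. I would then define $J_\F^{st}$ by exactly the same pushout-product recipe as $J_{gl}^{st}$ but indexed only over a set of representatives of triples $(G, M, N)$ with $G \in \F$, augmented by a set of generating acyclic cofibrations for the flat $\F$-level model structure.

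Next, running the small object argument against $J_\F^{st}$ yields a functorial map $q_X: X \to Q_\F X$. The $\F$-analogue of Proposition \ref{prop:omegalifting}, proved by the same adjunction-and-mapping-space argument but restricted to $G \in \F$, shows that $Q_\F X$ is a flat-level-fibrant $\F$-global $\Omega$-spectrum. Each morphism in $J_\F^{st}$ is simultaneously a flat cofibration (by the topological/simplicial character of the flat $\F$-level model structure) and an $\F$-global equivalence: the second reduces, via Proposition \ref{prop:fixedgomega} applied group by group, to $\lambda_M^N$ being a $G$-stable equivalence for each $G \in \F$, and is then closed under pushouts and transfinite composition by \cite[Props. 4.2, 4.4, 4.5]{Hau14}. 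Hence each $q_X$ is an $\F$-global equivalence and $Q_\F$ preserves $\F$-global equivalences.

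To feed this into \cite[Thm. 9.3]{Bou01} I then need the $\F$-analogue of Lemma \ref{lem:leveqomega}: every $\F$-global equivalence between $\F$-global $\Omega$-spectra is an $\F$-level equivalence. The argument is the same, using only subgroups of $\Sigma_n$ that lie in $\F$, which is permitted since $\F$ is closed under subgroups. With this in hand, axiom (A1) is Example \ref{exa:levelisstable} restricted to $\F$, (A2) follows from the lemma and two-out-of-three, and (A3) reduces, via applying $(-)(M)$ levelwise, to the dual of \cite[Prop. 4.4]{Hau14} used individually for each $G \in \F$. This produces the model structure and simultaneously identifies the fibrant objects.

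Monoidality and properness are then inherited from the per-group statements in \cite{Hau14}: the pushout-product axiom follows from \cite[Prop. 4.13]{Hau14} applied for each $G \in \F$, once one notes that flat cofibrations restrict to $G$-flat cofibrations with the trivial $G$-action and that $\F$-global equivalences are by definition $G$-stable equivalences for $G \in \F$; properness follows analogously from the pushout/pullback statements in \cite[Sec. 4]{Hau14}. I do not expect any single step to be difficult in isolation; the main obstacle is bookkeeping, namely verifying that all constructions indexed by a fixed $G$ in the earlier proofs assemble uniformly as $G$ ranges over $\F$. The closure of $\F$ under subgroups and quotients is exactly what makes the $\F$-versions of Proposition \ref{prop:fixedgomega} and Lemma \ref{lem:leveqomega} go through unchanged.
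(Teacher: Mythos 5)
Your proposal is correct and follows essentially the same route the paper intends: the appendix states the result is obtained by Bousfield localizing the flat $\F$-level model structure at the subset of the maps $i\square \overline{\lambda}_M^N$ associated to $G\in\F$, and your proof fills in exactly the details of that localization by transcribing the argument of Theorem \ref{theo:stablemod} and its supporting statements ($\F$-versions of Proposition \ref{prop:omegalifting}, the corollary on $\lambda_M^N$, and Lemma \ref{lem:leveqomega}) restricted to groups in $\F$. One small imprecision: in the $\F$-analogue of Lemma \ref{lem:leveqomega} what you actually use is closure of $\F$ under subgroups (to run the argument with $H\leq\Sigma_n$, $H\in\F$, in place of $\Sigma_n$); closure under quotients is needed elsewhere, namely to make the two descriptions of $\F$-level equivalences --- in terms of $H$-fixed points of levels with $H\leq\Sigma_n$ in $\F$, versus $G$-fixed points of evaluations $X(M)$ for $G\in\F$ --- agree, since a $G$-set of size $m$ yields a subgroup of $\Sigma_m$ that is a quotient of $G$. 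This does not affect the correctness of the overall argument.
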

Each of these model structures can be obtained via a left Bousfield localization of the respective level model structure. For example this can be done by applying the small object argument to the subset of those maps $i\square \overline{\lambda}_M^N$ used in Section \ref{sec:stablemod} that are associated to a finite group $G\in \F$ and finite $G$-sets $M$ and $N$ (of which $M$ is faithful). It follows that a symmetric spectrum is fibrant in either of the $\F$-global model structures if and only if it is fibrant in the respective level model structure and in addition an $\F$-global $\Omega$-spectrum, i.e., if it satisfies the condition in Definition \ref{def:globalomega} for all $G\in \F$ (instead of for all finite $G$).
The flat $\F$-global model structure can also be obtained by left Bousfield localizing the full global model structure.

Since every projective $\F$-cofibration is a flat cofibration, the $\F$-global model structure and the flat $\F$-global model structure are Quillen equivalent via the identity adjunction. Furthermore, the same proof as that of Theorem \ref{theo:quillen} applies to show that the projective $\F$-model structure is Quillen equivalent to the $\F$-global model structure on orthogonal spectra as introduced in \cite[Thm. IV.1.19]{Sch15}.

Let $\F'\subseteq \F$ be an inclusion of global families of finite groups. Then, by definition, every $\F$-global equivalence is an $\F'$-global equivalence and hence the localization $Sp^{\Sigma}\to Sp^{\Sigma}[\F\text{-global eq.}^{-1}]$ factors uniquely through a functor $Sp^{\Sigma}[\F\text{-global eq.}^{-1}]\to Sp^{\Sigma}[\F'\text{-global eq.}^{-1}]$. This functor has both a left and a right adjoint (both fully faithful) obtained by deriving the identity adjunction with respect to the projective respectively flat model structures. In particular, this defines two functors from the non-equivariant stable homotopy category to the global stable homotopy category.
It can be shown (\cite[Exs. IV.5.11 and IV.5.28]{Sch15}) that the right adjoint gives rise to Borel theories, whereas the image of the left adjoint is given by symmetric spectra with constant geometric fixed points.

Finally, both the projective $\F$-global stable model structure and the flat $\F$-global stable model structure lift to categories of modules over a symmetric ring spectrum and algebras over a commutative symmetric ring spectrum. There exist positive versions of both model structures which lift to the category of commutative algebras over a commutative symmetric ring spectrum. These allow the construction of ``multiplicative'' change of family functors, but there is a caveat: A positive projective $\mathcal{F}$-cofibrant commutative symmetric ring spectrum is in general not projective $\mathcal{F}$-cofibrant as a symmetric spectrum if $\mathcal{F}$ is not the family of all finite groups. As a consequence, the underlying symmetric spectrum of a left-induced ultracommutative symmetric ring spectrum is in general not left-induced.



\address
\email

\end{document}